\numberwithin{equation}{section}
\newcommand{\Supp}{\supp\hspace{1pt}\mu}
\newcommand{\rI}{{\rm I}}
\newcommand{\rH}{{\rm H}}
\renewcommand{\hom}{{\rm H}}
\newcommand{\inh}{{\rm I}}
\newcommand{\Cta}{{\cC}_{\ttt,\alpha}}
\newcommand\amr{$Y\in M_{m,n}$}
\newcommand\da{Diophantine approximation}
\newcommand\di{Diophantine}
\newcommand{\GL}{\operatorname{GL}}
\newcommand{\SL}{\operatorname{SL}}
\newcommand{\Span}{\operatorname{span}}
\newcommand{\fa}{{\mathcal A}}
\newcommand{\vf}{{\bf f}}
\newcommand{\vt}{{\bf t}}
\newcommand{\vp}{{\bf p}}
\newcommand{\vq}{{\bf q}}
\newcommand{\vw}{{\bf w}}
\newcommand{\ignore}[1]{}
\newcommand{\dd}{\mathbf{d}}
\newcommand{\supp}{{\operatorname{supp}}}
\newcommand{\bnz}{\smallsetminus\{\vv0\}}
\newcommand{\fw}{{\mathcal W}}
\newcommand{\df}{{\, \stackrel{\mathrm{def}}{=}\, }}
\newtheorem{theorem}{Theorem}[section]
\newtheorem{corollary}[theorem]{Corollary}
\newtheorem{lemma}[theorem]{Lemma}
\newtheorem{proposition}[theorem]{Proposition}
\def\R{\mathbb{R}}
\def\C{\mathbb{C}}
\def\Q{\mathbb{Q}}
\def\Z{\mathbb{Z}}
\def\N{\mathbb{N}}
\newcommand{\Rp}{\R^+}    
\def\cA{\mathcal{A}}
\def\cE{\mathcal{E}}
\def\cW{\mathcal{W}}
\def\cH{\mathcal{H}}
\def\cM{\mathcal{M}}
\def\cC{\mathcal{C}}
\newcommand{\eg}{{\it e.g.}}
\newcommand{\ve}{\varepsilon}
\newcommand{\vv}[1]{{\mathbf{#1}}}
\newcommand{\Id}{\operatorname{Id}}
\newcommand{\rank}{\operatorname{rank}}
\newcommand{\diag}{\operatorname{diag}}
\newenvironment{proof}{\noindent\textit{{Proof}}.}{\hspace*{\fill}\raisebox{-1ex}{$\boxtimes$}}
\newcommand{\proofend}{{\hspace*{\fill}\raisebox{-2ex}{$\boxtimes$}}}
\renewcommand{\tilde}{\widetilde}
\newcommand{\PI}{{\textstyle\prod}}
\newcounter{remark}
\newcounter{example}
\newcounter{question}
\newcounter{problem}
\newcounter{conjecture}
\newcommand{\TTT}{\mathbf{T}}
\newcommand{\ttt}{\mathbf{t}}
\newcommand{\AAA}{\mathcal{A}}
\newcommand{\La}{\Lambda}
\newcommand{\p}{\psi}
\newcommand{\qqand}{\qquad\text{and}\qquad}
\newcommand{\qand}{\quad\text{and}\quad}
\newcommand{\Mat}[2]{M_{#1,#2}}
\newcommand\eq[2]{\begin{equation}\label{eq:#1}{#2}\end{equation}}
\newcommand {\equ}[1]     {\eqref{eq:#1}}
\begin{document}

\title{Non-planarity and metric Diophantine approximation for systems of linear forms}

\author{Victor Beresnevich\\ {\small\sc(York)} \and
Dmitry Kleinbock \\ {\small \sc(Brandeis)}
\and
Gregory Margulis\\ {\small\sc(Yale)}
}

\date{}

\maketitle

\vspace*{-5ex}

\begin{abstract}
In this paper we develop a general theory of metric Diophantine approximation for systems of linear forms. A new notion of `weak non-planarity' of manifolds and more generally measures on the space $\Mat{m}{n}$ of $m\times n$ matrices over $\R$ is introduced and studied. This notion generalises the one of non-planarity in $\R^n$ and is used to establish strong (Diophantine) extremality of manifolds and measures in $\Mat{m}{n}$.
Thus our results 
contribute to resolving a 
problem 
stated in \cite[\S 9.1]{Gorodnik-07:MR2261070} regarding the strong extremality of manifolds in $\Mat{m}{n}$. Beyond the above main theme of the paper, we also develop a corresponding theory of inhomogeneous and weighted Diophantine approximation. In particular, we extend the recent inhomogeneous transference results of the first named author and Velani \cite{Beresnevich-Velani-10:MR2734962}
and use them to bring the inhomogeneous theory in balance with its homogeneous counterpart.
\end{abstract}

{\small \noindent\emph{Key words and phrases}: metric simultaneous Diophantine approximation, linear forms, strongly extremal manifolds, multiplicatively very well approximable points

\smallskip

\noindent\emph{AMS Subject classification}: 11J83, 11J13, 11K60 }



\section{Introduction}\label{intro}

Throughout $\Mat{m}{n}$ denotes the set of $m\times n$
matrices over $\R$ and $\|\vv \cdot\|$ stands for a norm on $\R^k$ which, without loss of generality, will be taken to be Euclidean. Thus $\|\vv x\| =\sqrt{x_1^2 + \ldots + x_k^2}$ for a $k$-tuple $\vv x=(x_1,\dots,x_k)\in\R^k$. We also define the following two functions of $\vv x=(x_1,\dots,x_k)\in\R^k$ that are particularly convenient for introducing the multiplicative form of Diophantine approximation:
$$
\Pi(\vv x)=\prod_{i=1}^k|x_i|\qquad\text{and}\qquad \Pi_+(\vv
x)=\prod_{i=1}^k\max\{1,|x_i|\}.
$$
We begin by recalling some fundamental concepts from the theory of Diophantine approximation. Let $Y\in\Mat{m}{n}$. If there exists $\ve>0$ such that the inequality
\begin{equation}\label{e:002}
\|Y\vv q-\vv p\|^m<\|\vv q\|^{-(1+\ve)n}
\end{equation}
holds for infinitely many $\vv q\in\Z^n$ and $\vv p\in\Z^m$, where $\vv q$ is regarded
as a column, then $Y$ is called \emph{very well approximable (VWA)}.
Further, if there exists $\ve>0$ such that the inequality
\begin{equation}\label{e:004}
\Pi(Y\vv q-\vv p)<\Pi_+(\vv q)^{-1-\ve}
\end{equation}
holds for infinitely many $\vv q\in\Z^n$ and $\vv p\in\Z^m$ then $Y$ is called \emph{very well multiplicatively approximable (VWMA)}. 
See Lemma~\ref{lemV} for an equivalent (and new) characterization of this property within a more general inhomogeneous setting.

One  says that a measure $\mu$ on $\Mat{m}{n}$ is  \emph{extremal}  (resp., \emph{strongly extremal}) if
$\mu$-almost all $Y\in\Mat{m}{n}$ are not VWA (resp., not VWMA).
It will be convenient to say that $Y$ itself is (strongly) extremal if so is the atomic measure supported at $Y$; in other words, if $Y$ is not very well (multiplicatively) approximable.

 It  is easily seen that
\begin{equation}\label{e:003}
\Pi_+(\vv q)\le \|\vv q\|^n\qquad\text{and}\qquad \Pi(Y\vv
q-\vv p)\le \|Y\vv q-\vv p\|^m
\end{equation}
for any $\vv q\in\Z^n\bnz$ and $\vv p\in\Z^m$. Therefore, (\ref{e:002}) implies (\ref{e:004}) and thus  strong extremality implies extremality. It is worth mentioning that if $\ve=0$ then (\ref{e:002}) as well as (\ref{e:004}) holds for infinitely many $\vv q\in\Z^n$ and $\vv p\in\Z^m$. The latter fact showing the optimality of exponents in (\ref{e:002}) and (\ref{e:004}) is due to Minkowski's theorem on linear forms -- see, \eg, \cite{Schmidt-1980}.

\medskip

The property of being strongly extremal is generic in $\Mat{m}{n}$.
Indeed, it is a relatively easy consequence of the Borel-Cantelli lemma that
Lebesgue measure on $\Mat{m}{n}$ is strongly extremal. However, when the entries of $Y$ are restricted by some functional relations (in other words $Y$ lies on a submanifold of $\Mat{m}{n}$)
investigating the corresponding measure  for extremality or strong extremality becomes much harder.
The study of manifolds for extremality goes back to the problem of Mahler \cite{Mahler-1932b} that
almost all points on the
Veronese curves $\{(x,\dots,x^n)\}$ (viewed as either row or column matrices)
are extremal. The problem was studied in depth for over 30
years and eventually settled by Sprind\v zuk in 1965 -- see
\cite{Sprindzuk-1969-Mahler-problem} for a full account. The far
more delicate conjecture that the Veronese curves in $\R^n$ are
\emph{strongly extremal}\/ (that is almost all points on the curves
are not VWMA) has been stated by Baker
\cite{Baker-75:MR0422171} and generalized by Sprind\v zuk \cite{Sprindzuk-1980-Achievements}.

It will be convenient to introduce the following definition (cf.\ \cite[\S 4]{Kleinbock-Tomanov-07:MR2314053}): say that  a subset $\cM$ of $\R^n$ is
\emph{non-planar}\/ if whenever $U$ is an open subset of $ \R^n$ containing at least one point of $\cM$, the intersection $\cM\cap U$ is not entirely contained in any
affine hyperplane of $\R^n$. Clearly the curve parametrized by $(x,\dots,x^n)$ is non-planar; more generally, if  $\cM$ is immersed into
$\R^n$ by an analytic map $\vv f=(f_1,\dots,f_n)$, then the
non-planarity of $\cM$ exactly means that the functions
$1,f_1,\dots,f_n$ are linearly independent over $\R$. Sprind\v zuk conjectured in 1980 that non-planar analytic submanifolds of $\R^n$ are strongly extremal.
There has been a sequence of partial results regarding the Baker-Sprind\v zuk problem but the complete solution was given in  \cite{Kleinbock-Margulis-98:MR1652916}.
In fact, a more general result was established there: strong extremality of smooth non-degenerate submanifolds.  Namely,  a submanifold $\cM$ 
is said to be \emph{non-degenerate}\/ if for almost every (with respect to the volume measure) point $\vv x$ of  $\cM$ one has
\begin{equation}\label{nd}
\R^n =
T_{\vv x}^{(k)}\cM\quad\text{for some }k\,,
\end{equation}
where $T_{\vv x}^{(k)}\cM$ is the $k$-th order tangent space to $\cM$ at ${\vv x}$ (the span of partial derivatives of a parameterizing map of orders up to $k$). 
It is not hard to see that any non-degenerate submanifold is non-planar while any non-planar analytic submanifold is non-degenerate. (In a way, non-degeneracy is an infinitesimal analog of the notion of non-planarity.)

The paper  \cite{Kleinbock-Margulis-98:MR1652916} also opened up the new avenues for investigating submanifolds of $\Mat{m}{n}$ for extremality and strong extremality. The following explicit problem was subsequently stated by Gorodnik as Question~35 in \cite{Gorodnik-07:MR2261070}:

\bigskip

\noindent\textbf{Problem 1: } \emph{Find reasonable and checkable conditions for a smooth submanifold $\cM$ of $\Mat{m}{n}$ which generalize non-degeneracy of vector-valued maps and impliy that almost every point of $\cM$ is extremal (strongly extremal).}

\bigskip

One can also pose a problem of generalizing the notion of non-planarity of subsets of $\R^n$ to those of $\Mat{m}{n}$, so that, when $\cM$ is an analytic submanifold, its non-planarity implies that almost every point of $\cM$ is extremal (strongly extremal). It is easy to see, e.g.\ from examples considered in \cite{Kleinbock-Margulis-Wang-09}, that
 being locally  not  contained in  proper affine subspaces  of $\Mat{m}{n}$ is not the right condition to consider.

\ignore{Recall that a connected analytic submanifold $\cM$ of $\R^n$ is
\emph{non-planar}\/ if $\cM$ is not entirely contained in a
proper hyperplane of $\R^n$ -- see e.g.\
\cite{Kleinbock-Margulis-98:MR1652916}. If $\cM$ is immersed into
$\R^n$ by an analytic map $\vv f=(f_1,\dots,f_n)$ then the
non-planarity of $\cM$ exactly means that the functions
$1,f_1,\dots,f_n$ are linearly independent over $\R$.}
Until recently the only examples of extremal manifolds of $\Mat{m}{n}$ with $\min\{m,n\}\ge2$ have been those found by Kovalevskaya \cite{Kovalevskaya-87:MR886162,
Kovalevskaya-87:MR904156}. She has considered submanifolds $\cM$
of $\Mat{m}{n}$ of dimension $m$ immersed by the map
\begin{equation}\label{e:005}
(x_1,\dots,x_m)\ \mapsto \ \left(\begin{array}{ccc}
f_{1,1}(x_1) & \dots & f_{1,n}(x_1) \\
\vdots & \ddots & \vdots \\
f_{m,1}(x_m) & \dots & f_{m,n}(x_m)
\end{array}
\right),
\end{equation}
where 
$f_{i,j}:I_i\to\R$ are $C^{n+1}$ functions defined on some intervals $I_i\subset \R$ such that every row in (\ref{e:005}) represents a non-degenerate map. Assuming that $m\ge n(n-1)$
Kovalevskaya has shown that $\cM$ is extremal. In the case $n=2$ and
$m\ge 2$ Kovalevskaya \cite{Kovalevskaya-87:MR934194} has also
established a stronger statement, which treats the inequality
$\|Y\vv q-\vv p\|^m<\Pi_+(\vv q)^{-(1+\ve)n}$ -- a mixture of (\ref{e:002}) and (\ref{e:004}).

In principle, manifolds (\ref{e:005}) are natural to consider but within the above results the dimensions $m$ and $n$ are bizarrely confined. The overdue general result regarding Kovalevskaya-type manifolds has
been recently established in \cite{Kleinbock-Margulis-Wang-09}. More precisely, it has been shown that any manifold of the form (\ref{e:005}) is strongly extremal provided that every row $(f_{i,1},\dots,f_{i,n})$ in (\ref{e:005}) is a non-degenerate map into $\R^n$ defined on an open subset of $\R^{d_i}$.

Working towards the solution of Problem~1 the following more general result has been established in \cite{Kleinbock-Margulis-Wang-09}. Let $\dd$ be the map defined on $\Mat{m}{n}$ that, to a given $Y\in\Mat{m}{n}$, assigns the collection of all minors of $Y$ in a certain fixed order. Thus $\dd$ is a map from $\Mat{m}{n}$ to $\R^N$, where $N=\binom{m+n}{n}-1$ is the number of all possible minors of an $m\times n$ matrix.
According to   \cite[Theorem~2.1]{Kleinbock-Margulis-Wang-09} any smooth submanifold $\cM$ of $\Mat{m}{n}$ such that $\dd(\cM)$ is non-degenerate is strongly extremal. The result also treats pushforwards of Federer measures -- see Theorem~\ref{kmw} 
for further details.


In the present paper we introduce a weaker (than in  \cite{Kleinbock-Margulis-Wang-09}) version of non-planarity of a subset of $\Mat{m}{n}$ which naturally extends the one for subsets of vector spaces and, in the smooth manifold case, is implied by the non-degeneracy of  $\dd(\cM)$. Then we use results of  \cite{Kleinbock-Margulis-Wang-09} to conclude (Corollary~\ref{t1}) that weakly non-planar analytic submanifolds of $\Mat{m}{n}$ are strongly extremal. See Theorem~\ref{t2} for a more general statement. 
The structure of the paper is as follows: we formally introduce the weak non-planarity condition and state our main results in \S\ref{main_results}. In the next section we compare our new condition with the one introduced in  \cite{Kleinbock-Margulis-Wang-09}. The 
main theorem is proved in \S\ref{proof}, while \S\ref{more} is devoted to some further features of the concept of weak non-planarity;  \S\ref{inhomsec} discusses an inhomogeneous extension of our main results, and the last section contains several  concluding remarks and open questions.
\bigskip

\noindent{\it Acknowledgements.} The authors are grateful to the University of Bielefeld for providing a stimulative research environment during their visits supported by SFB701. We gratefully acknowledge the support
of the National Science Foundation through grants DMS-0801064, DMS-0801195 and DMS-1101320, 
and of EPSRC through grants EP/C54076X/1 and EP/J018260/1.

\section{Main results}\label{main_results}


Let us begin by introducing some terminology and stating some earlier results. Let $X$ be a Euclidean space. Given $x\in X$ and $r>0$, let $B(x,r)$
denote the open ball of radius $r$ centred at $x$. If $V=B(x,r)$ and $c>0$, let $cV$ stand for $B(x,cr)$. Let $\mu$ be a measure on $X$. All the measures within this paper will be assumed to be Radon. Given $V\subset X$ such that $\mu(V)>0$ and a function $f:V\to\R$, let
$$
\|f\|_{\mu,V}=\sup_{x\in V\,\cap\,\supp\,\mu}|f(x)|.
$$
A Radon measure $\mu$ will be called \emph{$D$-Federer on $U$}, where $D>0$ and $U$ is an open subset of $X$, if $\mu(3V)<D\mu(V)$ for any ball $V\subset U$ centred in the support of $\mu$. The measure $\mu$ is called \emph{Federer} if for $\mu$-almost every point $x\in X$ there is a neighborhood $U$ of $x$ and $D>0$ such that $\mu$ is $D$-Federer on $U$.

Given $C,\alpha>0$ and an open subset $U\subset X$, we say that $f:U\to\R$ is \emph{$(C,\alpha)$-good on $U$ with respect to the measure $\mu$}\/ if for any ball $V\subset U$ centred in $\supp\,\mu$ and any $\ve>0$ one has
$$
\mu\big(\{x\in V:|f(x)|<\ve\}\big)\le C\left(\frac{\ve}{\|f\|_{\mu,V}}\right)^\alpha\mu(V)\,.
$$
Given $\vv f=(f_1,\dots,f_N):U\to\R^N$, we say that the pair
$(\vv f,\mu)$ is \emph{good}\/ if for $\mu$-almost every $x\in U$ there is a neighborhood $V\subset U$ of $x$ and  $C,\alpha > 0$ such that any linear combination of $1,f_1,\dots,f_N$ over $\R$ is $(C,\alpha)$-good on $V$.
The pair $(\vv f,\mu)$ is called \emph{non-planar}\/ if
\eq{nonpl}{\begin{aligned}\text{for any ball $V\subset U$ centered in $\supp\,\mu$},\qquad\qquad\qquad\\
\text{the set $\vv f(V\cap\supp\,\mu)$ is not contained in any affine hyperplane of $\R^N$}.\end{aligned}} Clearly it generalizes the definition of non-planarity given in the introduction: $\supp\,\mu$ is non-planar iff so is the pair $(\Id, \mu)$.


Basic examples of good and nonplanar pairs $(\vv f,\mu)$ 
are given by  $\mu = \lambda$ (Lebesgue
measure on $\R^d$) and $\vv f$ smooth and nondegenerate, see  \cite[Proposition 3.4]{Kleinbock-Margulis-98:MR1652916}.
The paper \cite{Kleinbock-Lindenstrauss-Weiss-04:MR2134453} introduces a class of {\sl friendly\/} measures:
  a measure $\mu$ on $\R^n$ is
friendly if and only if it is Federer and the pair $(\Id,\mu)$
is good and nonplanar.
In the latter paper the approach to metric \da\ developed in \cite{Kleinbock-Margulis-98:MR1652916}
has been extended to maps and measures satisfying the conditions described above.
One of its main results is the following statement, implicitly contained in  \cite{Kleinbock-Lindenstrauss-Weiss-04:MR2134453}:

\begin{theorem}\label{klw}{\rm \cite[Theorem~4.2]{dima pamq}}
Let $\mu$ be a Federer measure on
$\R^d$, $U\subset\R^d$ open, and $\vv f:U\to\R^n$ a continuous map such that $(\vv f,\mu)$ is
good and
nonplanar; then $\vv f_*\mu$
is strongly extremal.
\end{theorem}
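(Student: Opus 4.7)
The plan is to apply the Kleinbock--Margulis dynamical approach to multiplicative Diophantine approximation. For $\vv y \in \R^n$, set $u_{\vv y} = \left(\begin{smallmatrix} 1 & \vv y \\ 0 & I_n \end{smallmatrix}\right) \in \SL_{n+1}(\R)$, and for $\vv t = (t_1,\dots,t_n) \in \R_{\geq 0}^n$ put $g_{\vv t} = \diag(e^{t_1+\cdots+t_n}, e^{-t_1}, \dots, e^{-t_n})$. A Dani-type correspondence identifies VWMA points $\vv y$ with those for which there exist $\ve > 0$ and a sequence $\vv t^{(k)}$ with $|\vv t^{(k)}| \to \infty$ such that $g_{\vv t^{(k)}} u_{\vv y} \Z^{n+1}$ contains a nonzero vector of sup-norm at most $e^{-\ve |\vv t^{(k)}|}$. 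After a harmless discretization (taking $\vv t$ in $\N^n$ and adjusting $\ve$), to prove strong extremality of $\vv f_*\mu$ it suffices to show that for every $\ve > 0$, only finitely many $\vv t \in \N^n$ are ``bad'' at $\mu$-a.e.\ $x \in U$, where ``bad'' means $g_{\vv t} u_{\vv f(x)} \Z^{n+1}$ has a nonzero vector of sup-norm below $e^{-\ve |\vv t|}$.

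I would work locally around a $\mu$-typical $x_0 \in U$, shrinking to a ball $V$ on which $\mu$ is $D$-Federer and every $\R$-linear combination of $1, f_1, \dots, f_n$ is $(C,\alpha)$-good with respect to $\mu$. Setting $\vv h_{\vv t}(x) = g_{\vv t} u_{\vv f(x)}$, each coordinate of $\bigl(\bigwedge^i \vv h_{\vv t}(x)\bigr)\vv v$, for a primitive $\vv v \in \bigwedge^i \Z^{n+1}$, is a polynomial in $f_1(x), \dots, f_n(x)$ whose coefficients are monomials in $e^{\pm t_j}$ times Pl\"ucker coordinates of $\vv v$. The standard closure of the $(C,\alpha)$-good class under products (with updated parameters) then keeps these coordinate functions $(C',\alpha')$-good on $V$, with constants independent of $\vv t$ and $\vv v$. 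The non-planarity of $(\vv f,\mu)$ rules out any nontrivial linear relation in $1, f_1, \dots, f_n$ on $\supp\,\mu \cap V$, and a compactness argument over primitive directions on the unit sphere of $\bigwedge^i \R^{n+1}$ upgrades this to a uniform lower bound $\|\bigl(\bigwedge^i \vv h_{\vv t}(\cdot)\bigr)\vv v\|_{\mu, V} \geq \rho_0 > 0$ valid for every primitive $\vv v$ once $|\vv t|$ is large. The Kleinbock--Margulis quantitative non-divergence theorem then yields
\[
\mu\bigl(\{x \in V : g_{\vv t} u_{\vv f(x)} \Z^{n+1} \text{ has a nonzero vector of sup-norm } < \eta\}\bigr) \leq C''(\eta/\rho_0)^{\alpha'} \mu(V)
\]
for all $\eta \leq \rho_0$.

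Inserting $\eta = e^{-\ve|\vv t|}$ and summing over $\vv t \in \N^n$ produces a convergent series, so Borel--Cantelli rules out strong extremality failure on a full-measure subset of $V$; a standard covering argument, using that the ``typical'' hypotheses hold on a full-$\mu$-measure set in $U$, promotes this to all of $U$. The main obstacle I anticipate is the uniform lower bound on $\|\bigl(\bigwedge^i \vv h_{\vv t}(\cdot)\bigr)\vv v\|_{\mu, V}$ over primitive $\vv v$ \emph{and} large $\vv t$: unlike the single-parameter VWA setting where one diagonal direction always dominates, here different entries of $g_{\vv t}$ may dominate as $\vv t$ sweeps through the positive cone, so one must confirm that for \emph{every} asymptotic ratio of the $t_j$ the emerging dominant polynomial in $f_1, \dots, f_n$ is nontrivial on $\supp\,\mu \cap V$. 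This is precisely where the full content of non-planarity (no nontrivial affine relation in \emph{any} direction) is used, rather than a weaker assumption about a single hyperplane.
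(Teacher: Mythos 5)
Your outline is correct, and it is essentially the classical Kleinbock--Margulis argument by which this theorem is proved in the sources it is quoted from: the Dani-type correspondence for VWMA, $(C,\alpha)$-goodness of the coordinate functions, a uniform lower bound over primitive multivectors coming from non-planarity plus compactness, quantitative non-divergence in the Federer-measure form of Kleinbock--Lindenstrauss--Weiss, and Borel--Cantelli over $\vv t\in\N^n$. The paper itself takes a different route: it does not reprove the correspondence or the non-divergence estimate, but quotes the statement from \cite{dima pamq} (it is also the $\min\{m,n\}=1$ case of Theorem~\ref{t2}, by Lemma~\ref{l:02}); in the proof of Theorem~\ref{t2} the dynamical machinery is delegated wholesale to the packaged criterion of Theorem~\ref{thm: criterion}, and the only thing verified is the uniform positivity of $\|\pi^+ u_{F(\cdot)}\vw\|_{\mu,V}$ via Lemmas~\ref{elem} and \ref{eplus} --- exactly the step you single out as the crux. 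Two remarks on your sketch. First, the uniformity in $\vv t$ is obtained most cleanly by bounding $\|g_{\vv t}u_{\vv f(x)}\vv v\|$ from below by the norm of the projection of $u_{\vv f(x)}\vv v$ onto the intersection over $\vv t$ of the non-contracted subspaces (the paper's Lemma~\ref{eplus}); this replaces your ``every asymptotic ratio of the $t_j$'' discussion by a single compactness argument on the unit sphere, and your worry is then fully answered: in each exterior power the relevant coordinates of $u_{\vv f(x)}\vv v$ are affine in $f_1,\dots,f_n$ with coefficients exhausting the coordinates of $\vv v$, so non-planarity forces them not to vanish simultaneously on $V\cap\supp\mu$ unless $\vv v=0$. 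Second, because only the single row $\vv f$ appears, no products of the $f_i$ ever arise, so your appeal to ``closure of the $(C,\alpha)$-good class under products'' is unnecessary --- and it is just as well, since that is not a safe general principle and the hypothesis only concerns linear combinations of $1,f_1,\dots,f_n$; this is precisely the point where the general $\Mat{m}{n}$ case differs and goodness of the minor map $\dd\circ F$ has to be assumed instead.
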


Here and hereafter $\vv f_*\mu$ is the {\sl pushforward\/} of $\mu$ by $\vv f$, defined by $\vv f_*\mu(\cdot) \df \mu\big( \vv f^{-1}(\cdot)\big)$. When $\mu$ is Lebesgue measure and $\vv f$ is smooth and nonsingular,  $\vv f_*\mu$ is simply (up to equivalence) the volume measure on the manifold $\vv f(U)$.

\medskip

The next development came in the paper by Kleinbock, Margulis and Wang in 2011.
Given $F:U\to\Mat{m}{n}$, let us say that $(F,\mu)$ is \emph{good} if $(\dd\circ F,\mu)$ is good, where $\dd$ is the imbedding of $\Mat{m}{n}$ to $\R^N$ defined in \S\ref{intro}, where $N=\binom{m+n}{n}-1$.
Also we will say that $(F,\mu)$ is \emph{strongly non-planar} if \eq{strnonpl}{(\dd\circ F,\mu)\text{ is non-planar. }}
Clearly $\vv d$ is the identity map when $\min\{n,m\}=1$, thus in both  row-matrix and column-matrix cases \equ{strnonpl} is equivalent to \equ{nonpl}.
Therefore the following general result, established in \cite{Kleinbock-Margulis-Wang-09}, generalizes the above theorem:

\begin{theorem}\label{kmw}{\rm \cite[Theorem~2.1]{Kleinbock-Margulis-Wang-09}}
Let $U$ be an open subset of\/ $\R^d$, $\mu$ be a Federer measure on $U$ and $F:U\to\Mat{m}{n}$ be a continuous map such that
$(F,\mu)$ is {\rm(i)} good, and {\rm(ii)} strongly non-planar.  Then
$F_*\mu$ is strongly extremal.
\end{theorem}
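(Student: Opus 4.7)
The plan is to reduce strong extremality to a quantitative non-divergence statement on the space of unimodular lattices, along the lines of Kleinbock--Margulis. Put $G=\SL_{m+n}(\R)$ and $\Gamma=\SL_{m+n}(\Z)$, and for $Y\in\Mat{m}{n}$ let $u_Y=\left(\begin{smallmatrix}I_m&Y\\0&I_n\end{smallmatrix}\right)$. For each $(\vv t,\vv s)=(t_1,\ldots,t_m,s_1,\ldots,s_n)$ with $t_i,s_j\ge 0$ and $T\df\sum_i t_i=\sum_j s_j$, set $g_{\vv t,\vv s}=\diag(e^{t_1},\ldots,e^{t_m},e^{-s_1},\ldots,e^{-s_n})$. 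Unwinding the definitions shows that $Y$ is VWMA iff there is $\gamma>0$ and a sequence $(\vv t^{(j)},\vv s^{(j)})$ with $T^{(j)}\to\infty$ along which the lattice $g_{\vv t^{(j)},\vv s^{(j)}}u_Y\Z^{m+n}$ contains a nonzero vector of norm $<e^{-\gamma T^{(j)}}$. Working on a ball $V\subset U$ centred in $\supp\mu$ and applying Borel--Cantelli, it suffices to show that for every $\gamma>0$ the sum $\sum_{(\vv t,\vv s)}\mu(E_{\vv t,\vv s,\gamma})$ converges, where $(\vv t,\vv s)$ ranges over a dyadic sublattice of the cone above and $E_{\vv t,\vv s,\gamma}\df\{x\in V:g_{\vv t,\vv s}u_{F(x)}\Z^{m+n}\text{ has a nonzero vector of norm }<e^{-\gamma T}\}$.

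Each $\mu(E_{\vv t,\vv s,\gamma})$ is bounded via the Kleinbock--Lindenstrauss--Weiss quantitative non-divergence theorem. Shrinking $V$ if necessary, one may assume $\mu$ is $D$-Federer on $V$ and that every affine combination of the coordinates of $\dd\circ F$ is $(C,\alpha)$-good on $V$ with respect to $\mu$; such $V$ exists near $\mu$-a.e.\ point by hypothesis. For each primitive vector $\vv w\in\bigwedge^k\Z^{m+n}$ with $1\le k\le m+n-1$, set $\varphi_{\vv w}(x)\df\bigl\|\bigwedge^k(g_{\vv t,\vv s}u_{F(x)})\vv w\bigr\|$. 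A direct expansion shows that the coordinates of $\bigwedge^k(g_{\vv t,\vv s}u_{F(x)})\vv w$ are of the form $e^{\lambda_I(\vv t,\vv s)}\,\ell_I(\dd(F(x)),\vv w)$, where $\ell_I$ is an $\R$-linear combination of $1$ and the coordinates of $\dd(F(x))$ whose coefficients are the Plücker coordinates of $\vv w$, and $\lambda_I(\vv t,\vv s)=\sum_{i\in I,\,i\le m}t_i-\sum_{i\in I,\,i>m}s_{i-m}$. Hypothesis (i) then makes $\varphi_{\vv w}$ a maximum of $(C,\alpha)$-good functions, hence $(C',\alpha)$-good with constants independent of $\vv w$ and $(\vv t,\vv s)$ --- the first input to the non-divergence theorem.

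The central obstacle, and the place where strong non-planarity enters decisively, is the uniform lower bound $\|\varphi_{\vv w}\|_{\mu,V}\ge\rho_{\vv t,\vv s}$ required by KLW. Since $\vv w$ is primitive some Plücker coordinate of $\vv w$ is nonzero, so for a suitable $I$ the function $\ell_I(\dd(F(\cdot)),\vv w)$ is a nontrivial affine combination of $1$ and the coordinates of $\dd\circ F$; hypothesis (ii) then forbids it from vanishing identically on $\supp\mu\cap V$. Normalizing $\vv w$ to the unit sphere and invoking a compactness argument on the finite-dimensional space of such affine combinations upgrades this to a lower bound $c_0(V)>0$ uniform in $\vv w$. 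Choosing the index $I$ that maximizes $\lambda_I(\vv t,\vv s)$ and exploiting $\sum t_i=\sum s_j=T$ gives $\max_I\lambda_I\ge\beta T$ for some $\beta=\beta(m,n,k)>0$, hence $\|\varphi_{\vv w}\|_{\mu,V}\ge c_0(V)e^{\beta T}$. Feeding $\rho=c_0(V)e^{\beta T}$ and $\delta=e^{-\gamma T}$ into the KLW estimate yields $\mu(E_{\vv t,\vv s,\gamma})\le C''e^{-\alpha(\beta+\gamma)T}\mu(V)$, and summing over $(\vv t,\vv s)$ --- a polynomial-in-$T$ number of terms per level --- produces the convergent series, completing the Borel--Cantelli argument and establishing strong extremality of $F_*\mu$.
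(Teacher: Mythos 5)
Your overall strategy --- the Dani-type correspondence, Borel--Cantelli over a discrete set of expansion tuples, and the Kleinbock--Lindenstrauss--Weiss non-divergence estimate with goodness supplied by hypothesis (i) and a sup-norm lower bound supplied by hypothesis (ii) --- is the standard route to this theorem; it is essentially the Kleinbock--Margulis--Wang argument that the present paper black-boxes as Theorem~\ref{thm: criterion} and then only verifies the lower-bound condition \equ{cor condition}. The genuine problem is the step where you produce that lower bound. You first find \emph{some} index $I$ for which $\ell_I(\dd(F(\cdot)),\vw)$ is a nontrivial affine combination (this is where strong non-planarity enters, correctly), and then you pass to \emph{the} index maximizing $\lambda_I(\vv t,\vv s)$ and conclude $\|\varphi_{\vw}\|_{\mu,V}\ge c_0(V)e^{\beta T}$. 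These two indices need not coincide: the maximizing index may carry an identically vanishing combination (all the relevant Pl\"ucker coordinates of $\vw$ can be zero there), while the index where the combination is nontrivial may have $\lambda_I$ arbitrarily small. Concretely, take $\vw=\vv e_1$: then $u_{F(x)}\vw=\vv e_1$ and $\|g_{\vv t,\vv s}u_{F(x)}\vw\|=e^{t_1}$, which stays bounded as $T\to\infty$ along tuples with $t_1\to0$, so no bound of the form $c_0e^{\beta T}$ can hold, and your final estimate $\mu(E_{\vv t,\vv s,\gamma})\le C''e^{-\alpha(\beta+\gamma)T}\mu(V)$ is not justified as stated.

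What is true, and all that is needed, is weaker: for every nonzero $\vw$ of rank $k$ there is a nontrivial coordinate function $\ell_I$ whose index is \emph{never contracted} by any $g_{\vv t,\vv s}$ in your cone. For $k\le m$ one can take $I\subset\{1,\dots,m\}$ (in the Cauchy--Binet-type expansion every Pl\"ucker coordinate of $\vw$ appears as the coefficient of a distinct minor in some such $\ell_I$), and for $k\ge m$ one takes indices of the form $\{1,\dots,m\}\cup J$; these are exactly the directions spanning $\cE^+=\bigcap_{\vt\in\fa}\cE^+_\vt$ of Lemma~\ref{eplus}, on which $\lambda_I\ge0$. Non-planarity of $(\dd\circ F,\mu)$ plus your compactness argument then yields only the uniform \emph{constant} bound $\|\varphi_{\vw}\|_{\mu,V}\ge c_0(V)$, but that suffices: with $\rho=c_0(V)$ and $\delta=e^{-\gamma T}$ the non-divergence estimate gives $\mu(E_{\vv t,\vv s,\gamma})\le C''e^{-\alpha\gamma T}\mu(V)$, and the series still converges since the number of admissible integer tuples at level $T$ is polynomial in $T$. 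So the proof is repairable by replacing the ``maximal $\lambda_I$'' step with this non-contraction statement --- which is precisely the point the paper handles (in the more delicate weakly non-planar setting) via the projection $\pi^+$ onto $\cE^+$ in the proof of Theorem~\ref{t2}.
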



In this paper we introduce a broader class of strongly extremal measures on $\Mat{m}{n}$ by relaxing condition (ii) of Theorem~\ref{kmw}. To introduce a weaker notion of non-planarity, we need the following notation:
given \eq{defab}{A\in \Mat{n}{m}(\R)\text{ and }B\in \Mat{n}{n}(\R)\,,}
 define
\begin{equation}\label{e:006}
\cH_{A,B} \df \{Y\in\Mat{m}{n}:\det(AY+B)=0\}.
\end{equation}

These sets will play the role of proper affine subspaces of vector spaces. It will be convenient to introduce notation
$\mathbf{H}_{m,n}$ for the collection of all sets $\cH_{A,B}\subset\Mat{m}{n}$ where $A\in\Mat{n}{m}$, $B\in\Mat{n}{n}$ and $\rank(A|B)=n$. Then
 for $F$ and $\mu$ as above, let us  say that $(F,\mu)$ is \emph{weakly non-planar} if
\eq{nonplgeneral}{
F(V\cap\supp\,\mu)\not\subset \cH\text{ for any ball $V\subset U$ centered in $\supp\,\mu$ and any }\cH\in \mathbf{H}_{m,n}\,.
}

Obviously $\cH_{A,B}=\varnothing$ if $A=0$. Otherwise, $\det(AY+B)$ is a non-constant polynomial and $\cH_{A,B}$ is a hypersurface in $\Mat{m}{n}$. Thus, the weak non-planarity of $(F,\mu)$ simply requires that $F(\supp\,\mu)$ does not locally lie entirely inside such a hypersurface. We shall see in the next section  that in both  row-matrix and column-matrix cases the weak non-planarity defined above is again equivalent to \equ{nonpl} (hence to strong non-planarity), and that in general strong non-planarity implies weak non-planarity but not vice versa. Thus the following theorem is a nontrivial generalization of Theorem~\ref{kmw}:

\begin{theorem}[Main Theorem]\label{t2} 
Let $U$ be an open subset of\/ $\R^d$, $\mu$ a Federer measure on $U$ and $F:U\to\Mat{m}{n}$ a continuous map such that
$(F,\mu)$ is {\rm(i)} good, and {\rm(ii)} weakly  non-planar. Then
$F_*\mu$ is strongly extremal.
\end{theorem}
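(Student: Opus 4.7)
The overall approach is to follow the quantitative non-divergence template of Kleinbock--Margulis, exactly as in the proof of Theorem~\ref{kmw}, but with hypothesis (ii) above replacing strong non-planarity as the input controlling sup-norms of polynomial functions. By the Dani correspondence for systems of linear forms together with a Borel--Cantelli argument, $F_*\mu$ is strongly extremal provided that for every weight vector $(\vv s,\vv l)$ with $s_i,l_j>0$ and $\sum s_i=\sum l_j=1$ one has an exponential decay estimate, uniform in $(\vv s,\vv l)$, on the $\mu$-measure of those $x$ in any ball $V\subset U$ centred in $\supp\mu$ for which the lattice $g_t^{\vv s,\vv l}\,u_{F(x)}\,\Z^{m+n}$ contains a non-zero vector of norm $<e^{-\ve t}$. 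Here $u_Y=\bigl(\begin{smallmatrix}I_m&Y\\ 0&I_n\end{smallmatrix}\bigr)$ and $g_t^{\vv s,\vv l}$ is the usual diagonal flow whose weights separate the two blocks.

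Invoking the $(C,\alpha)$-good quantitative non-divergence theorem in its standard form then reduces the sought decay bound to verifying, for every non-zero primitive subgroup $\Lambda\subset\Z^{m+n}$ of rank $k\in\{1,\dots,m+n-1\}$, that
\[
\varphi_\Lambda(Y)\df\bigl\|\,g_t^{\vv s,\vv l}\,u_Y\,(v_1\wedge\cdots\wedge v_k)\,\bigr\|,\qquad \{v_i\}\text{ a }\Z\text{-basis of }\Lambda,
\]
is $(C,\alpha)$-good on $V$ with respect to $\mu$ and satisfies $\|\varphi_\Lambda\circ F\|_{\mu,V}\ge\rho$ for a uniform $\rho>0$. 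The $(C,\alpha)$-good condition is immediate from hypothesis (i), since each Pl\"ucker component of $u_Y(v_1\wedge\cdots\wedge v_k)$ is, after factoring out the exponentials coming from $g_t^{\vv s,\vv l}$, a polynomial in the minors of $Y$ whose total degree is bounded by $\min(k,m+n-k)$.

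The crux is the sup-norm lower bound. I would expand $u_Y(v_1\wedge\cdots\wedge v_k)$ in Pl\"ucker coordinates and identify, for every primitive $\Lambda$, a distinguished component of the form $c\cdot\det(AY+B)$ with $c\neq 0$, $A\in\Mat{n}{m}$, $B\in\Mat{n}{n}$, and $\rank(A|B)=n$. The construction of $(A,B)$ from $\Lambda$ would proceed by passing to the $n$-dimensional quotient of $\R^{m+n}$ naturally associated to $\Lambda$ (or to a suitable sub-quotient when $k\ne m$); primitivity of $\Lambda$ then forces $(A|B)$ to have full row rank and hence $\cH_{A,B}\in\mathbf{H}_{m,n}$. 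With this dictionary in hand, the weak non-planarity hypothesis \equ{nonplgeneral} directly forces $\det(AF(\cdot)+B)\not\equiv 0$ on $V\cap\supp\mu$, and a standard compactness and covering argument, together with the $(C,\alpha)$-good property, upgrades pointwise non-vanishing into the required uniform lower bound $\rho$.

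The main obstacle is precisely this combinatorial-algebraic matching of primitive sublattices $\Lambda$ to elements of $\mathbf{H}_{m,n}$. Sublattices whose wedge product already has a non-zero Pl\"ucker coordinate supported entirely in the ``bottom'' or ``top'' index set (so that $\varphi_\Lambda$ is bounded below by a non-zero constant independently of $Y$) need to be singled out and treated separately; for the remaining generic $\Lambda$, one must verify carefully that the associated pair $(A,B)$ indeed satisfies $\rank(A|B)=n$ rather than some smaller rank, since only then does hypothesis (ii) apply. Once this correspondence between primitive sublattices and the family $\mathbf{H}_{m,n}$ has been set up, the proof concludes by the quantitative non-divergence argument exactly as in \cite{Kleinbock-Margulis-98:MR1652916} and \cite{Kleinbock-Margulis-Wang-09}.
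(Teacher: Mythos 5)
Your overall strategy --- reduce strong extremality, via the dynamical correspondence and quantitative non-divergence, to a uniform lower bound on the non-contracted part of $u_{F(\cdot)}\vw$ over nonzero integer decomposable elements $\vw$, and feed weak non-planarity into that bound through a determinantal identity --- is the paper's strategy (the paper simply black-boxes the first reduction as Theorem~\ref{thm: criterion}, i.e.\ \cite[Corollary 5.1]{Kleinbock-Margulis-Wang-09}, rather than re-running the Dani correspondence and Borel--Cantelli). The gap is that the step you yourself call the ``main obstacle'' is the entire content of the proof, and the route you sketch for it does not work as stated. For $\vw$ of rank $\ell=m$ there is indeed a single distinguished component (along $\vv e_{\{1,\dots,m\}}$) which, via orthogonal complements and transposition, vanishes identically on $F(V\cap\supp\,\mu)$ precisely when $F(V\cap\supp\,\mu)\subset\cH_{A,B}$ for some pair with $\rank(A|B)=n$; this is the equivalence (i) $\iff$ (vi) of Lemma~\ref{elem}. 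But for $\ell\ne m$ there is no single component of the form $c\det(AY+B)$ governing the non-contracted projection: that projection is a whole vector of Pl\"ucker coordinates, its vanishing locus is not an element of $\mathbf{H}_{m,n}$ in any evident way, and so weak non-planarity cannot be invoked ``per sublattice'' as you propose; moreover primitivity of $\Lambda$ is irrelevant to the rank condition on $(A|B)$ (it only guarantees $\|\vw\|\ge1$). The paper avoids constructing an $(A,B)$ for such $\vw$ altogether: for $\ell<m$ it completes $\vv v_1,\dots,\vv v_\ell$ to $m$ linearly independent vectors and notes that non-vanishing of the projected $m$-fold wedge (the case already handled) forces linear independence of the projections of $\vv v_1,\dots,\vv v_\ell$, hence non-vanishing of the projected $\ell$-fold wedge; for $\ell>m$ a dual argument is used.

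A second point you leave unaddressed is uniformity in the weights: which Pl\"ucker directions are non-contracted by $g_\vt$ depends on $\vt$, so a lower bound on $\|g_\vt u_{F(\cdot)}\vw\|_{\mu,V}$ uniform over all admissible $\vt\in\fa$ does not follow from bounding one fixed component unless that component lies in a subspace not contracted by \emph{every} $g_\vt$. The paper isolates exactly this subspace, $\cE^+=\bigcap_{\vt\in\fa}\cE^+_\vt$ (Lemma~\ref{eplus}), proves pointwise non-vanishing of $\pi^+ u_{F(\cdot)}\vw$ on each ball for all decomposable $\vw\ne0$, and then obtains the uniform constant in \equ{cor condition} by compactness of the set of decomposable elements of norm one together with $\|\vw\|\ge1$ for integer $\vw$ (your ``covering plus $(C,\alpha)$-good'' upgrade is not needed for this, and by itself would not give uniformity over $\vw$ and $\vt$). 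Also, minor but worth noting: the Pl\"ucker components of $u_Y\vw$ are \emph{linear} combinations of $1$ and the minors of $Y$, which is why hypothesis (i) gives goodness; if they were genuinely higher-degree polynomials in the minors, as your degree count suggests, (i) would not apply directly. Until you supply substitutes for Lemma~\ref{elem}, the $\ell\ne m$ reduction, and Lemma~\ref{eplus}, the proposal is a plan rather than a proof.
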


\ignore{
\noindent\textbf{Back to the analytic case.}
We end this section by showing that Definition~\ref{def1} is a special case of Definition~\ref{def2} and thus Theorem~\ref{t1} is a consequence of Theorem~\ref{t2}.
Let $\cM$ be a $d$-dimensional analytic manifold in $\Mat{m}{n}$. Without loss of generality we can assume that $\cM$ is immersed in $\Mat{m}{n}$ by an analytic map $F$ defined on a ball $U$ in $\R^d$. Let $\mu$ be the restriction of $d$-dimensional Lebesgue measure to $U$. Then, saying that $\cM$ is strongly extremal is the same as saying that $F_*\mu$ is strongly extremal.

Our next goal is to show that $(F,\mu)$ is good and non-planar (in the sense of Definition~\ref{def1}) provided that $\cM$ is non-planar (in the sense of Definition~\ref{def2}). The fact that $(F,\mu)$ is good is due to the analyticity of $F$ -- see \cite{Kleinbock-Margulis-Wang-09} or indeed \cite{Kleinbock-03:MR1982150}. Let $V$ be a ball centered in $\supp\,\mu$ which is contained in the closure of $U$. Assume for the moment that (\ref{wnp}) does not hold for some choice of $A\in  \Mat{n}{m}$ and $B\in  \Mat{n}{n}$ with $\rank(A|B)=n$. Clearly there is a non-empty open ball $V'\subset V\cap U\subset V\cap\supp\,\mu$. Then, $F(V')\subset\cH_{A,B}$, that is $G(\vv x)\df \det(AF(\vv x)+B)$ vanishes on $V'$. Obviously $G(\vv x)$ is an analytic function on $U$. Since $G(\vv x)$ vanishes on the ball $V'$ and $U$ is connected, $G(\vv x)$ vanishes on $U$. On the other hand, by Definition~\ref{def1}, in particular by (\ref{e:006}), $G(\vv x)$ may not vanish on the whole $U$. This contradiction shows that our assumption about (\ref{wnp}) failing is false, thus $(F,\mu)$ is  non-planar.
}

Specializing to the case of submanifolds of $\Mat{m}{n}$, we can call
a smooth submanifold $\cM$ of $\Mat{m}{n}$
\emph{weakly non-planar}\/ if
\eq{mfldnonpl}{
V\cap \cM \not\subset \cH
\text{ for any ball $V$ centered in $\cM$ and any }\cH\in\mathbf{H}_{m,n}\,.
}
Then Theorem~\ref{t2} readily implies

\begin{corollary}\label{t1}
Any analytic  weakly non-planar submanifold of $\Mat{m}{n}$ is strongly
extremal.
\end{corollary}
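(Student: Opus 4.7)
The plan is to deduce Corollary~\ref{t1} directly from Theorem~\ref{t2} by working in an analytic chart. Since strong extremality is preserved under countable unions (of VWMA sets of measure zero), and any analytic submanifold $\cM$ admits a countable cover by analytic coordinate patches, it suffices to verify the conclusion locally. So I would fix a connected open ball $U\subset\R^d$, with $d=\dim\cM$, together with an analytic embedding $F:U\to\Mat{m}{n}$ whose image is an open subset of $\cM$, and take $\mu$ to be Lebesgue measure on $U$; such $\mu$ is trivially Federer.

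Next I would verify the two hypotheses of Theorem~\ref{t2}. Condition (i), goodness of $(F,\mu)$, means that $(\dd\circ F,\mu)$ is good, and the components of $\dd\circ F$ are analytic since $F$ is; any $\R$-linear combination of $1$ together with the components of $\dd\circ F$ lies in a fixed finite-dimensional space of analytic functions. A classical fact (see \cite{Kleinbock-Margulis-98:MR1652916}, \cite{Kleinbock-03:MR1982150}) supplies uniform $(C,\alpha)$-goodness of all functions in such a space with respect to Lebesgue measure on compact subsets of $U$, so (i) holds.

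The key step is verifying condition (ii), weak non-planarity of $(F,\mu)$. Suppose, for contradiction, that there exist a ball $V\subset U$ and a hypersurface $\cH_{A,B}\in\mathbf{H}_{m,n}$ with $F(V)\subset\cH_{A,B}$. Then the function $g(x)\df\det\bigl(AF(x)+B\bigr)$ is analytic on $U$ and vanishes identically on the open set $V$. The identity principle for real-analytic functions on the connected domain $U$ forces $g\equiv 0$ on $U$, i.e.\ $F(U)\subset\cH_{A,B}$. Since $F$ is an embedding onto an open subset of $\cM$, one may choose any $x_0\in U$ and a sufficiently small ball $W\subset\Mat{m}{n}$ centered at $F(x_0)$ with $W\cap\cM\subset F(U)\subset\cH_{A,B}$, in direct contradiction with the weak non-planarity of $\cM$ stated in \equ{mfldnonpl}.

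Theorem~\ref{t2} then applies and gives strong extremality of $F_*\mu$. Since $F_*\mu$ is equivalent to the volume measure on the chart $F(U)\subset\cM$ up to a locally positive Jacobian factor, almost every point of $F(U)$ is non-VWMA; summing over a countable cover of $\cM$ by such charts completes the argument. The only non-routine ingredient — and the reason the analytic hypothesis is essential rather than mere smoothness — is the promotion of local vanishing of $\det(AF+B)$ to global vanishing on $U$ via the identity principle; a merely $C^\infty$ version of the statement would require an infinitesimal substitute such as the non-degeneracy of $\dd\circ F$ used in Theorem~\ref{kmw}.
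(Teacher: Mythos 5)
Your argument is correct and follows essentially the same route as the paper: reduce to an analytic chart with Lebesgue measure (which is Federer), get goodness of $(F,\mu)$ from analyticity via the same references, derive weak non-planarity of $(F,\mu)$ from \equ{mfldnonpl}, and apply Theorem~\ref{t2}. The only difference is your identity-principle detour (extending the vanishing of $\det(AF(\cdot)+B)$ from $V$ to all of $U$), which the paper's proof bypasses by localizing to a smaller ball $V'\subset V$ whose image already contains $W\cap\cM$ for some ball $W$ in $\Mat{m}{n}$ centered on $\cM$; accordingly, analyticity is really essential for condition (i) (goodness), since condition (ii) follows from \equ{mfldnonpl} for any continuous chart, so your closing remark slightly misplaces where the analytic hypothesis is used.
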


\begin{proof} Without loss of generality we can assume that $\cM$ is immersed in $\Mat{m}{n}$ by an analytic map $F$ defined on  $\R^d$. Let $\mu$ be the  $d$-dimensional Lebesgue measure; then, saying that $\cM$ is strongly extremal is the same as saying that $F_*\mu$ is strongly extremal. To see that  $(F,\mu)$ is  weakly non-planar in the sense of \equ{nonpl} provided that $\cM$ is weakly non-planar in the sense of \equ{mfldnonpl}, take a ball $V\subset \R^d$ and assume  that $F(V) = F(V\cap\supp\,\mu)\subset \cH_{A,B}$  for some choice of $A\in  \Mat{n}{m}$ and $B\in  \Mat{n}{n}$ with $\rank(A|B)=n$. Clearly there exists a ball $U$ in $\Mat{m}{n}$ centered in $\cM$ and a ball $V'\subset V$ such that $U\cap \cM\subset F(V')$, contradicting to  \equ{mfldnonpl}.
Finally, the fact that $(F,\mu)$ is good is due to the analyticity of $F$ -- see \cite{Kleinbock-Margulis-Wang-09} or indeed \cite{Kleinbock-03:MR1982150}.
\end{proof}

\bigskip
We remark that if $\cM$  is a connected  analytic  submanifold of $\Mat{m}{n}$, then \equ{mfldnonpl} is simply equivalent to  $\cM$ not being contained in $ \cH$
for any $\cH\in\mathbf{H}_{m,n}$.

\bigskip


\ignore{Let us now discuss the optimality of the sufficient conditions in the four statements above. It is well known that converse to Theorem~\ref{klw} is not true: indeed, if the image of $\vf$ is contained in an affine subspace of $\R^n$, strong extremality of $\vf_*\mu$ depends on \di\ properties of that subspace, see
\cite{Kleinbock-03:MR1982150} for a detailed investigation.

However, one can argue that the non-planarity assumption of Theorem~\ref{klw} is optimal in a certain sense. Namely, given a subset $\cM$ of $\R^N$, let us say that {\sl the non-planarity of $\cM$ fails over $\Z$\/} if there exists a ball centered in $\cM$ whose intersection with $\cM$  is  contained in a {\it rational\/} affine hyperplane of $\R^N$. More generally, for a
map $\vv f=(f_1,\dots,f_N):U\to\R^N$ and a measure $\mu$ on $U$ we will say that {\sl the non-planarity of $(\vf,\mu)$ fails over $\Z$\/}  if there exists a ball  $V\subset U$ centered in $\supp\,\mu$ such that the set $\vv f(V\cap\supp\,\mu)$ is  contained in a rational affine hyperplane of $\R^N$; equivalently, some nontrivial integer linear combination of $1, f_1,\dots,f_N$ vanishes on $V\cap\supp\,\mu$. It is obvious that vectors contained in proper rational affine subspaces are VWMA, in fact even VWA; thus, for any $\vv f:U\to \R^n$ and any measure $\mu$ on $U$, failing of non-planarity of $(\vv f,\mu)$ over $\Z$ implies that $\vf_*\mu$ is not strongly extremal, and even not extremal.

On the other hand, a similar partial converse to Theorem~\ref{kmw} does not hold. That is, one can  construct examples of submanifolds $\cM$ of $\Mat{m}{n}$  which are strongly extremal, yet the non-planarity of $\vv d(\cM)$ fails over $\Z$.  Such examples already appeared in \cite{Kleinbock-Margulis-Wang-09}, and we will see an example like that in the next section. This essentially explains the deficiency of Theorem~\ref{kmw} and justifies a need for the search of a better condition. Our main result resolves this problem, namely it does happen to be optimal in the sense described above. Namely, let us define
$\mathbf{H}_{m,n}(\Z)$ to be the collection of all sets $\cH_{A,B}\subset\Mat{m}{n}$ where $A\in\Mat{n}{m}(\Z)$, $B\in\Mat{n}{n}(\Z)$ and $\rank(A|B)=n$. Then,  for a
map $F:U\to\Mat{m}{n}$ and a measure $\mu$ on $U$,  say that {\sl the non-planarity of $(F,\mu)$ fails over $\Z$\/}  if there exists a ball  $V\subset U$ centered in $\supp\,\mu$ such that  $F(V\cap\supp\,\mu)$ is  contained in some $\cH\in\mathbf{H}_{m,n}(\Z)$. Then we have

\begin{theorem}\label{t3}
Let $U$ be an open subset of $\R^d$, $\mu$ a measure on $U$ and $F:U\to\Mat{m}{n}$ a  map such that
the non-planarity of $(F,\mu)$ fails over $\Z$. Then
$F_*\mu$ is not strongly extremal.
\end{theorem}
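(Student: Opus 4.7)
By hypothesis, fix a ball $V\subset U$ centered in $\supp\mu$ and $\cH_{A,B}\in\mathbf{H}_{m,n}(\Z)$ with $F(V\cap\supp\mu)\subset\cH_{A,B}$. Since $\mu(V)>0$ gives $F_*\mu(\cH_{A,B})\geq\mu(V)>0$, it suffices to produce a positive-$F_*\mu$-measure subset of VWMA matrices inside $\cH_{A,B}$; my plan is to establish the stronger statement that every $Y\in\cH_{A,B}$ is VWMA.

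\textbf{Step 1: a $\Z$-linear relation on the minors of $Y$.} First I would recast $\det(AY+B)=0$ as an integer affine relation on the coordinates of $\dd(Y)$. Using the block identity $\det(AY+B)=\det\!\begin{pmatrix}I_m&-Y\\A&B\end{pmatrix}$ and Laplace expansion along the bottom $n$ rows,
\[
\det(AY+B)\ =\ \sum_{\substack{S\subset\{1,\ldots,m+n\}\\|S|=n}}\varepsilon_S\,\det\bigl((A|B)_S\bigr)\,\det\bigl(Y_{I_S,J_S}\bigr),
\]
where $(I_S,J_S)$ is the pair of index sets induced by $S^c$ and $\det(Y_{\varnothing,\varnothing}):=1$. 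The coefficients are the $n\times n$ integer minors of $(A|B)$; combined with $\rank(A|B)=n$ and the non-emptiness of $\cH_{A,B}$ (guaranteed by $V\cap\supp\mu\neq\varnothing$), at least one coefficient on a non-trivial minor $\det(Y_{I_S,J_S})$ (with $|I_S|\geq 1$) must be non-zero. Hence $\dd(Y)$ lies on a proper rational affine hyperplane of $\R^N$.

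\textbf{Step 2: from the minor relation to matrix VWMA.} The bridge to VWMA is the identity
\[
A(Y\vv q-\vv p)\ =\ (AY+B)\vv q\ -\ (A\vv p+B\vv q),\qquad\vv p\in\Z^m,\ \vv q\in\Z^n,
\]
which writes the $A$-image of the approximation error as the difference between $(AY+B)\vv q\in\operatorname{im}(AY+B)$, a proper subspace of $\R^n$ since $\det(AY+B)=0$, and the integer point $A\vv p+B\vv q$ of the full-rank sublattice $\Lambda:=A\Z^m+B\Z^n\subset\Z^n$ (full-rank by $\rank(A|B)=n$). Picking a unit vector $\vv v\in\R^n$ with $(AY+B)\vv v=0$ and applying multi-dimensional Dirichlet gives $\vv q\in\Z^n$ with $\|\vv q\|_\infty\leq N$ and $\operatorname{dist}(\vv q,\R\vv v)\lesssim N^{-1/(n-1)}$, whence $\|(AY+B)\vv q\|\lesssim N^{-1/(n-1)}$. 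Restricting $\vv q$ further to the finite-index sublattice of $\Z^n$ on which $-B\vv q\in A\Z^m$ allows one to take $\vv P:=A\vv p+B\vv q=0$; the identity above then forces $\|A(Y\vv q-\vv p)\|\lesssim N^{-1/(n-1)}$. An auxiliary Dirichlet approximation inside the full-rank integer lattice $\ker A\cap\Z^m$ handles the remaining $\ker A$ coordinates of $Y\vv q-\vv p$, and combining both estimates via the product structure of $\Pi$ is intended to yield $\Pi(Y\vv q-\vv p)<\Pi_+(\vv q)^{-1-\varepsilon}$ for infinitely many $(\vv p,\vv q)$ and some $\varepsilon=\varepsilon(m,n)>0$.

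\textbf{Main obstacle.} The principal difficulty is verifying that the exponent gain $\varepsilon>0$ really is strictly positive in all regimes $(m,n)$: the singularity of $AY+B$ contributes improvement only along the $A$-image directions of the error vector, while the complementary $\ker A$ directions receive at best Dirichlet-level control, and these must be balanced against $\Pi_+(\vv q)\leq N^n$ and the product structure of $\Pi$. A cleaner route, which I expect the authors actually take, is via the Dani correspondence: the integer relation $\sum c_{I,J}\det(Y_{I,J})=0$ from Step 1 produces an $n$-tuple of integer-generated vectors in the dual lattice $u_Y^{-T}\Z^{m+n}$ whose bottom $n$ coordinates are linearly dependent, and so the associated diagonal-flow trajectory in the space of unimodular lattices drops below the Dirichlet level at rates corresponding, through the usual duality between short vectors and rational approximations, precisely to $\Pi(Y\vv q-\vv p)<\Pi_+(\vv q)^{-1-\varepsilon}$.
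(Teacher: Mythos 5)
Your reduction is legitimate: since Theorem~\ref{t3} places no hypotheses on $\mu$, it is indeed equivalent to the pointwise claim that every $Y\in\cH_{A,B}$ with $A,B$ integral and $\rank(A|B)=n$ is VWMA, and this pointwise statement is what the paper proves (for every $x\in V\cap\supp\,\mu$). Step 1 is also correct, but note that its output --- a nontrivial integer affine relation among the minors of $Y$ --- is strictly weaker information than membership in an integral $\cH_{A,B}$: Proposition~\ref{2x2} together with Theorem~\ref{t2} gives a manifold for which the non-planarity of $\dd(\cM)$ fails over $\Z$ and which is nevertheless strongly extremal. So any successful argument must retain the decomposable structure of the relation (its coefficient vector is, up to signs, the Pl\"ucker vector of the rational subspace $W_{A,B}$), not merely the rationality of a hyperplane in $\dd$-coordinates; your Step 2 does go back to $A,B$, but Step 1 by itself buys nothing.

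The genuine gap is Step 2. First, the set $\{\vv q\in\Z^n: B\vv q\in A\Z^m\}$ need not have finite index in $\Z^n$: take $m=n=2$, $A=\diag(1,0)$, $B=\diag(0,1)$, so that $\rank(A|B)=2$ and $\cH_{A,B}=\{Y: y_{11}=0\}\neq\varnothing$, while $A\Z^2=\Z\times\{0\}$ and the admissible $\vv q$ form the rank-one subgroup $\Z\times\{0\}$; in such cases the coset $B\vv q+A\Z^m$ need not come near $\vv 0$ at all, and the congruence restriction is incompatible with the Dirichlet choice of $\vv q$ near $\ker(AY+B)$. Second, and more fundamentally, even when $A\vv p+B\vv q=0$ can be arranged, smallness of $\|A(Y\vv q-\vv p)\|$ controls only the component of the error along the row space of $A$ (at most $\rank A$ directions), while the component in $\ker A$ can at best be reduced modulo $\ker A\cap\Z^m$ by adjusting $\vv p$, i.e.\ to size $O(1)$; since $\ker A$ is in general not coordinate-aligned, every coordinate of $Y\vv q-\vv p$ may then be of order $1$, so $\Pi(Y\vv q-\vv p)\asymp 1$ and no inequality of the form \eqref{e:004} follows at all --- the issue is not merely whether the exponent gain $\ve$ is positive. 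The multiplicative gain has to come from an anisotropic (weighted) pigeonhole applied inside the rank-$m$ rational subgroup $\Z^{m+n}\cap W_{A,B}^\perp$, which is exactly the mechanism your closing paragraph guesses at but does not carry out. That is the paper's actual proof: the equivalence (i) $\iff$ (vi) of Lemma~\ref{elem} converts $F(x)\in\cH_{A,B}$ into the vanishing $\pi^+u_{F(x)}\vw=0$ for a single integral decomposable $m$-vector $\vw$ (the exterior product of an integer basis of $W_{A,B}^\perp$), Lemma~\ref{eplus} then places $u_{F(x)}\vw$ in $(\cE^+_\vt)^\perp$ for some $\vt\in\fa$, and Theorem~\ref{thm: criterion}(ii), i.e.\ \cite[Corollary~5.2]{Kleinbock-Margulis-Wang-09}, whose proof contains the weighted contraction, the extraction of short vectors, and the translation back to the multiplicative inequality \eqref{e:004}, yields that $F_*\mu$ is not strongly extremal. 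None of that quantitative content is supplied in your sketch, so as written the proposal does not constitute a proof.
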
}


We postpone the proof of Theorem~\ref{t2} 
until \S\ref{proof}, after we compare the two (strong and weak) nonplanarity conditions introduced above.




\section{Weak vs.\ strong non-planarity}\label{vs}

Throughout this section $F:U\to\Mat{m}{n}$ denotes a map from an open subset $U$ of a Euclidean space $X$, and $\mu$ is a measure on $X$.

\medskip
The first result of the section shows that Theorem~\ref{kmw} is a consequence of Theorem~\ref{t2}:

\begin{lemma}\label{l:01}
If  $(F,\mu)$ is strongly non-planar, then it is weakly non-planar.
\end{lemma}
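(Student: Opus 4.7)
The plan is to show that $Y\mapsto\det(AY+B)$ always lies in the $\R$-span of the constant $1$ together with the minors of $Y$, so that $\cH_{A,B}$ corresponds under $\dd$ to an affine hyperplane of $\R^N$ whenever the defining polynomial is non-zero. The rank hypothesis $\rank(A|B)=n$ will force this non-vanishing, and the inclusion $F(V\cap\supp\mu)\subset\cH_{A,B}$ will then contradict the non-planarity of $(\dd\circ F,\mu)$ that is built into strong non-planarity.

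To see that $\det(AY+B)$ is an affine function of $\dd(Y)$, factor $AY+B=(A|B)\binom{Y}{I_n}$ and apply the Cauchy--Binet formula:
\begin{equation*}
\det(AY+B)=\sum_{S}\det\bigl((A|B)_S\bigr)\cdot\det\binom{Y}{I_n}^{\!\!S},
\end{equation*}
the sum taken over $n$-element subsets $S\subset\{1,\ldots,m+n\}$. Splitting $S=S_1\sqcup S_2$ with $S_1\subset\{1,\ldots,m\}$ and $S_2\subset\{m+1,\ldots,m+n\}$ and cofactor-expanding $\det\binom{Y}{I_n}^{\!\!S}$ along the $|S_2|$ rows coming from $I_n$ (each of which has a single non-zero entry), one identifies this determinant, up to a sign, with the $|S_1|\times|S_1|$ minor of $Y$ with rows indexed by $S_1$ and columns indexed by $\{1,\ldots,n\}\setminus(S_2-m)$; the case $S_1=\varnothing$ contributes $\pm 1$. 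Since distinct subsets $S$ produce distinct minors of $Y$, the coefficients of $\det(AY+B)$ in the basis $\{1\}\cup\{\text{minors of }Y\}$ are, up to sign, precisely the $n\times n$ minors of the augmented matrix $(A|B)$.

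Now assume, for contradiction, that $(F,\mu)$ is strongly non-planar but fails to be weakly non-planar: there exist a ball $V\subset U$ centred in $\supp\mu$ and some $\cH_{A,B}\in\mathbf{H}_{m,n}$ with $F(V\cap\supp\mu)\subset\cH_{A,B}$. Since $\rank(A|B)=n$, at least one $n\times n$ minor of $(A|B)$ is non-zero, so the computation above furnishes a non-zero affine function $\ell:\R^N\to\R$ satisfying $\ell(\dd(Y))=\det(AY+B)$ for every $Y\in\Mat{m}{n}$. The assumed containment in $\cH_{A,B}$ then says that $\ell$ vanishes on $\dd\circ F(V\cap\supp\mu)$, placing this set inside the proper affine hyperplane $\{\ell=0\}\subset\R^N$, which contradicts \equ{nonpl} applied to $(\dd\circ F,\mu)$ on $V$. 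The Cauchy--Binet identification is the only step of genuine substance; once it is in place, the rank condition immediately rules out degeneracy of the resulting affine equation.
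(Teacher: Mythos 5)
Your proof is correct and takes essentially the same route as the paper: where you apply Cauchy--Binet to the factorization $AY+B=(A|B)\begin{pmatrix}Y\\ I_n\end{pmatrix}$, the paper reaches the identical conclusion via a block-determinant identity and the Laplace expansion, namely that $\det(AY+B)$ is an affine combination of $1$ and the minors of $Y$ whose coefficients are, up to sign, the $n\times n$ minors of $(A|B)$, after which both arguments use $\rank(A|B)=n$ and the non-planarity of $(\dd\circ F,\mu)$ in exactly the same way. One pedantic remark: in the degenerate case $A=0$ (so that the only nonvanishing minor of $(A|B)$ is $\det B$) your set $\{\ell=0\}$ is empty rather than a proper affine hyperplane, but then $\cH_{A,B}=\varnothing$ and the assumed containment of the nonempty set $F(V\cap\supp\,\mu)$ is impossible outright --- the paper covers this by saying the equation defines ``either an affine hyperplane or the empty set,'' so this is a phrasing issue, not a gap.
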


\begin{proof}
Let $(\dd\circ F,\mu)$ be non-planar. Let $A\in  \Mat{n}{m}$ and $B\in  \Mat{n}{n}$ with $\rank(A|B)=n$ and let $V\subset U$ be a ball centered in $\supp\,\mu$.
Observe that for any $Y\in \Mat{m}{n}$
$$
\left(\begin{array}{cc}
   I_m & 0 \\
   A & I_n
       \end{array}
\right)
\left(\begin{array}{cc}
   I_m & Y \\
   A & B
       \end{array}
\right)=
\left(\begin{array}{cc}
   I_m & Y \\
   0 & AY+B
       \end{array}
\right).
$$
Therefore,
\begin{equation}\label{e:044}
\det(AY+B)=\det
\left(\begin{array}{cc}
   I_m & Y \\
   A & B
       \end{array}
\right).
\end{equation}
By the Laplace identity, the right hand side of (\ref{e:044})
is a linear combination of minors of $Y$ and $1$ with the coefficients
being minors of order $n$ of $(A|B)$ taken with appropriate
signs. Since $\rank(A|B)=n$, these coefficients 
are not all
zero, therefore vanishing of (\ref{e:044}) defines either an affine hyperplane or the empty set.
Since $(\dd\circ F,\mu)$ is non-planar, it follows that (\ref{e:044})
does not vanish on $V\cap\supp\,\mu$, thus implying $F(V\cap\supp\,\mu)\not\subset \cH_{A,B}$. This verifies that $(F,\mu)$ is weakly non-planar and completes the proof.
\end{proof}

\bigskip

The converse to the above lemma is in general not true; here is a counterexample:

\bigskip

\begin{proposition}\label{2x2}
Let \begin{equation}\label{e:045new}Y = F(x,y,z) = \left(\begin{array}{cc}
         x & y \\
         z & x
       \end{array}
\right)\,,\end{equation} and let $\mu$ be Lebesgue measure on $\R^3$. Then $(F,\mu)$ is weakly but not strongly non-planar.
\end{proposition}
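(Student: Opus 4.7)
The proof splits into two independent tasks. For the failure of strong non-planarity, I would simply observe that the $(1,1)$ and $(2,2)$ entries of $F(x,y,z)$ both equal $x$. Since $\dd:\Mat{2}{2}\to\R^5$ records all minors of a $2\times 2$ matrix (its four entries and its determinant) as separate coordinates, two coordinates of $\dd\circ F$ are identically equal. Hence the image $(\dd\circ F)(\R^3)$ lies in a hyperplane of the form $\{w_i=w_j\}$ of $\R^5$, showing that $(\dd\circ F,\mu)$ is not non-planar, i.e.\ $(F,\mu)$ is not strongly non-planar.

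For weak non-planarity I need to show $F(V)\not\subset\cH_{A,B}$ for any ball $V\subset\R^3$ and any $A,B\in\Mat{2}{2}$ with $\rank(A|B)=2$. Since $F$ is polynomial and $\R^3$ is connected, $F(V)\subset\cH_{A,B}$ on any open ball forces the polynomial $P(x,y,z)\df\det\bigl(AF(x,y,z)+B\bigr)$ to vanish identically on $\R^3$. So it suffices to rule out $P\equiv 0$ whenever $\rank(A|B)=2$.

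To compute $P$ I would use the identity $\det(AY+B)=\det\begin{pmatrix}I_2 & Y \\ A & B\end{pmatrix}$ from \eqref{e:044} and expand the resulting $4\times 4$ determinant by generalized Laplace expansion along the top two rows (or by direct computation). The result is
\begin{equation*}
P(x,y,z) \;=\; \Delta_{12}(x^2-yz) + (\Delta_{14}-\Delta_{23})\,x - \Delta_{13}\,y + \Delta_{24}\,z + \Delta_{34},
\end{equation*}
where $\Delta_{ij}$ denotes the $2\times 2$ minor of the $2\times 4$ matrix $(A|B)$ on columns $i<j$. Since $1,x,y,z,x^2-yz$ are linearly independent over $\R$, the vanishing $P\equiv 0$ would force $\Delta_{12}=\Delta_{13}=\Delta_{24}=\Delta_{34}=0$ together with $\Delta_{14}=\Delta_{23}$.

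It then remains to derive a contradiction from $\rank(A|B)=2$. If the rank is $2$ then some $\Delta_{ij}\neq 0$, which the data above restricts to $\Delta_{14}=\Delta_{23}\neq 0$. In particular column $1$ of $(A|B)$ is nonzero; the conditions $\Delta_{12}=\Delta_{13}=0$ then force columns $2$ and $3$ to be scalar multiples of column $1$, hence of each other, so $\Delta_{23}=0$ -- a contradiction. (Equivalently, the Plücker relation $\Delta_{12}\Delta_{34}-\Delta_{13}\Delta_{24}+\Delta_{14}\Delta_{23}=0$ collapses to $\Delta_{14}^2=0$ once the four vanishing conditions are substituted.) I expect the main bookkeeping obstacle to be carrying out the Laplace expansion cleanly enough to identify the five coefficients (and their signs) with the $\Delta_{ij}$'s; the rank argument is then routine.
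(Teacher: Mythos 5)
Your proposal is correct; the first half (failure of strong non-planarity via the two coinciding entries, hence a hyperplane $\{w_i=w_j\}$ containing $\dd\circ F(\R^3)$) is exactly the paper's argument, but your treatment of weak non-planarity is a genuinely different route. The paper makes the same initial reduction (by analyticity it suffices to show $\det\bigl(AF(x,y,z)+B\bigr)\not\equiv 0$ for each $(A|B)$ of rank $2$), but then argues by cases: if $\det B\neq 0$ take $Y=0$; if $\det A\neq 0$ take $y=z=0$ and $x$ large; and if $\det A=\det B=0$ it normalizes $(A|B)$ by Gaussian elimination and exhibits explicit witness points $(x,y,z)$ in four sub-cases. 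You instead expand the polynomial once and for all, identify its five coefficients with the Pl\"ucker coordinates $\Delta_{ij}$ of $(A|B)$, and dispose of all cases simultaneously via the linear independence of $1,x,y,z,x^2-yz$ and the observation that $\Delta_{12}=\Delta_{13}=\Delta_{24}=\Delta_{34}=0$ together with $\Delta_{14}=\Delta_{23}\neq 0$ is incompatible with $\rank(A|B)=2$ (your column-dependence argument and the Pl\"ucker relation both close this correctly). Your displayed formula for $P$ is the correct direct expansion of $\det(AF+B)$; note only that \eqref{e:044} as printed actually equals $\det(B-AY)$ (the block elimination requires $-A$), so a literal Laplace expansion of that $4\times4$ matrix would flip the signs of $\Delta_{13},\Delta_{14},\Delta_{23},\Delta_{24}$ -- immaterial here, since the coefficient-vanishing analysis and the contradiction with $\rank(A|B)=2$ are unchanged. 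What your route buys: no casework, and a transparent description of exactly which linear relation among the minors of $F$ would be needed to trap the image in some $\cH_{A,B}$ (namely $\Delta_{14}=\Delta_{23}$ with the rest vanishing), which also makes the specialization of the Laplace identity used in Lemma~\ref{l:01} explicit; what the paper's casework buys is explicit nonvanishing points with essentially no algebra.
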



\begin{proof}
The fact that $\cM = F(\R^3)$ is not strongly non-planar is trivial because there are two identical minors (elements) in every $Y\in\cM$.
Now let $A,B\in \Mat{2}{2}$ with $\rank(A|B)=2$. By \equ{mfldnonpl} and in view of the analyticity of $F$, it suffices to verify that
\begin{equation}\label{e:046new}
\det(AY+B)\not=0\qquad\text{for some $Y$ of the form (\ref{e:045new}).}
\end{equation}
If $\det B\not=0$ then taking $Y=0$ proves (\ref{e:046new}). Also if
$\det A\not=0$ then ensuring (\ref{e:046new}) is very easy. Indeed, take
$Y$ of the form (\ref{e:045new}) with $y=z=0$ and $x$ sufficiently large.
Then $$\det(AY+B)=\det(xA+B)=x\det A\det(I_n+\tfrac1x BA^{-1})\not=0 \iff
\det(I_n+\tfrac1x BA^{-1})\not=0\,.$$ The latter condition is
easily met for sufficiently large $x$ because $\frac1x BA^{-1}\to0$
as $x\to\infty$. Thus for the rest of the proof we can assume that
$\det A=\det B=0$. Then without loss of generality we can also
assume that
$$
A=\left(\begin{array}{cc}
         \alpha_1 & \alpha_2 \\
         0 & 0
       \end{array}
\right)\qquad\text{and}\qquad B=\left(\begin{array}{cc}
         0 & 0 \\
         \beta_1 & \beta_2
       \end{array}
\right),
$$
otherwise we can use Gaussian elimination method to replace $A$ and
$B$ with the matrices of the above form. For $\rank(A|B)=2$ we have
that at least one of $\alpha_1$ and $\alpha_2$ is non-zero and at
least one of $\beta_1$ and $\beta_2$ is non-zero. For $Y$ is of the form (\ref{e:045new}), we have
$$
AY+B=\left(\begin{array}{cc}
         \alpha_1x+\alpha_2z & \alpha_1y+\alpha_2x \\
         \beta_1 & \beta_2
       \end{array}
\right).
$$
If $\alpha_1\not=0$ and $\beta_1\not=0$ then taking $x=0$, $y=1$,
$z=0$ ensures (\ref{e:046new}).\\
If $\alpha_2\not=0$ and $\beta_1\not=0$ while $\alpha_1=0$ then
taking $x=1$ and
$z=0$ ensures (\ref{e:046new}).\\
If $\alpha_2\not=0$ and $\beta_2\not=0$ then taking $x=0$, $y=0$ and
$z=1$ ensures (\ref{e:046new}).\\
If $\alpha_1\not=0$ and $\beta_2\not=0$ while $\alpha_2=0$ then
taking $x=1$ and $y=0$ ensures (\ref{e:046new}).
\end{proof}

\bigskip

We remark that  the non-planarity of $\vv d(\cM)$ for $\cM$ as above fails over $\Z$, and still $\cM$ is strongly extremal in view of Theorem~\ref{t2}.

\bigskip


Note however that in the case when matrices are rows/columns,
conditions \equ{nonpl} and \equ{strnonpl} are equivalent. This readily follows from 

\begin{lemma}\label{l:02}
Let $\min\{n,m\}=1$. Then for any $A\in \Mat{n}{m}$ and $B\in
 \Mat{n}{n}$ such that $\rank(A|B)=n$, the equation $\det(AY+B)=0$
defines either a hyperplane or an empty set.
\end{lemma}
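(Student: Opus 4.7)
The plan is to split into the two possibilities $n=1$ and $m=1$, and in each case show that $\det(AY+B)$, viewed as a polynomial in the entries of $Y$, is affine-linear (degree $\le 1$) and not identically zero under the hypothesis $\rank(A|B)=n$. For such a polynomial the zero set is automatically either empty (if the linear part vanishes and the constant part does not) or an affine hyperplane (if the linear part does not vanish), which is exactly what the lemma asserts.

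First I would dispose of the case $n=1$: here $A\in\Mat{1}{m}$, $B\in\R$, and $Y$ is a column of length $m$, so $\det(AY+B)=AY+B$ is manifestly affine-linear, and $\rank(A|B)=1$ just says $(A|B)\neq 0$, guaranteeing that this function is not identically zero.

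The more substantive case is $m=1$: here $A\in\Mat{n}{1}$, $B\in\Mat{n}{n}$, and $Y\in\Mat{1}{n}$, so $AY$ is a rank-$\le 1$ outer product. The matrix determinant lemma yields
$$
\det(AY+B)=\det(B)+Y\cdot\mathrm{adj}(B)\cdot A,
$$
which is affine-linear in the $n$ entries of $Y$. If $\det B\neq 0$ the polynomial is already non-trivial, so suppose $\det B=0$. Then $\rank(B)\le n-1$, and combining this with $\rank(A|B)\le\rank(B)+1$ and $\rank(A|B)=n$ forces $\rank(B)=n-1$ exactly. In this rank-deficient case $\mathrm{adj}(B)$ has rank one and factors as $vw^T$ with $v$ spanning $\ker B$ and $w$ spanning $\ker B^T$ (which follows from $B\cdot\mathrm{adj}(B)=\mathrm{adj}(B)\cdot B=0$ together with $\mathrm{adj}(B)\neq 0$, the latter because some $(n-1)\times(n-1)$ minor of $B$ is non-zero). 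Consequently $\mathrm{adj}(B)A=(w^TA)v$, which vanishes iff $w^TA=0$, iff $A$ is orthogonal to $\ker B^T$, iff $A\in\mathrm{col}(B)$. But $A\in\mathrm{col}(B)$ would give $\rank(A|B)=\rank(B)=n-1$, a contradiction; hence $\mathrm{adj}(B)A\neq 0$ and the linear part of $\det(AY+B)$ is non-zero.

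The main obstacle is this final case $\det B=0$ of $m=1$; once one recognises that the linear coefficient $\mathrm{adj}(B)A$ detects exactly whether $A$ lies in the column span of $B$, the rank hypothesis closes the argument. An alternative route would be to directly construct a row $Y_0$ making $AY_0+B$ invertible by a basis-exchange argument on the columns of $(A|B)$, but passing through the adjugate formula is cleaner as it simultaneously establishes the affine-linearity of the defining polynomial.
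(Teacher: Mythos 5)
Your argument is correct, but it follows a different route from the paper. The paper's proof of the $m=1$ case rests on the block-determinant identity $\det(AY+B)=\det\left(\begin{smallmatrix}1 & Y\\ A & B\end{smallmatrix}\right)$ (its equation (3.2)) and a single Laplace expansion along the first row: this exhibits $\det(AY+B)$ as $\det B+\sum_i a_iy_i$, where $\det B$ and the $a_i$ are, up to sign, precisely the maximal ($n\times n$) minors of $(A|B)$; the hypothesis $\rank(A|B)=n$ then says in one stroke that not all coefficients of this affine functional vanish, and the dichotomy hyperplane/empty set follows immediately. You instead invoke the matrix determinant lemma to get $\det(AY+B)=\det B+Y\,\mathrm{adj}(B)\,A$ and then split on $\det B$: when $\det B=0$ you force $\rank B=n-1$, use the rank-one factorisation $\mathrm{adj}(B)=vw^{T}$ with $v$ spanning $\ker B$ and $w$ spanning $\ker B^{T}$, and conclude $\mathrm{adj}(B)A\neq 0$ from $A\notin\mathrm{col}(B)$. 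This is valid (including the degenerate-identically-zero possibility, which your case analysis rules out), and in fact the entries of $\mathrm{adj}(B)A$ are exactly the signed maximal minors of $(A|B)$ other than $\det B$, so you are computing the same linear functional as the paper with different bookkeeping. What the paper's approach buys is economy and uniformity: recognising all coefficients at once as minors of $(A|B)$ makes the rank hypothesis do the whole job without a case split, and it is the same mechanism that drives the general Laplace-identity argument in its Lemma 3.1; what your approach buys is a self-contained, purely matrix-algebraic derivation that does not pass through the $(n+1)\times(n+1)$ bordered determinant, at the modest cost of the adjugate analysis and the dichotomy on $\det B$.
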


\begin{proof}
First consider the case $n=1$. Then
$A=(a_1,\dots,a_m)\in  \Mat{1}{m}$, $B=(b)\in  \Mat{1}{1}$ and
$Y=(y_1,\dots,y_m)^t\in  \Mat{1}{1}$. Obviously, $AY+B=0$ becomes
$\sum_{i=1}^ma_iy_i+b=0$. Since $\rank(A|B)=1$, one of the coefficients is non-zero, and the claim follows.

\medskip

Consider now the case $m=1$. Then $A=(a_1,\dots,a_n)^t\in  \Mat{1}{n}$,
$B\in  \Mat{n}{n}$ and $Y=(y_1,\dots,y_n)\in  \Mat{1}{n}$.
By (\ref{e:044}),
\begin{equation}\label{e:007}
\det(AY+B)=0\qquad\Longleftrightarrow\qquad \det \left(\begin{array}{cc}
   1 & Y \\
   A & B
       \end{array}
\right)=\det B+\sum_{i=1}^na_iy_i=0,
\end{equation}
where $a_i$ is the cofactor of $y_i$. Since $\rank(A|B)=n$, as least
one of the numbers $\det B,a_1,\dots,a_n$ is non-zero. If
$a_1=\dots=a_n=0$ then $\det B\not=0$ and (\ref{e:007}) defines an
empty set. Otherwise, (\ref{e:007}) obviously defines a hyperplane.
\end{proof}

\section{Proof of Theorem~\ref{t2} 
}\label{proof}

Let us first express 
subsets $\cH_{A,B}$ of $\Mat{m}{n}$ in several equivalent ways. It will be convenient to introduce the following notation: we let   $W = \R^{m+n}$,
denote by $\vv e_1,\dots,\vv e_{m+n}$ the standard basis of $W$, and, for $i = 1,\dots,m+n$,  by $E^+_i$ (resp., $E^-_i$) the span of  the first (resp., the last) $i$ vectors of this basis, and by $\pi^+_i$ (resp., $\pi^-_i$) the orthogonal projection of $W$ onto $E^+_i$ (resp., $E^-_i$). Also, if $I =
\{i_1,\dots,i_{\ell}\}\subset \{1,\dots,m+n\}$ (written in the increasing order), we denote $\vv e_{I} \df
\vv e_{i_1}\wedge\dots\wedge \vv e_{i_{\ell}}\in \bigwedge^\ell(W)\,.$
For $A,B$ as in \equ{defab}, let $W_{A,B}$ be the subspace of $W$ spanned by the columns of the matrix
$\begin{pmatrix}A^t \\ B^t\end{pmatrix} = (A | B)^t$.  (Here and hereafter the superscript $t$ stands for transposition.) Note that
$\dim(W_{A,B}) = n$ due to the assumption on the rank of $(A | B)$.

Given $Y\in\Mat{m}{n}$, let us denote
\eq{defuy}{
u_Y \df
\begin{pmatrix}
   I_m & Y \\
    0 & I_n
       \end{pmatrix}.
}
Then we have the following elementary

\begin{lemma}\label{elem} The following are equivalent:
\begin{itemize}
\item[\rm (i)] $Y\in \cH_{A,B}$;
\item[\rm (ii)] $\dim\big( \pi^-_n(u_Y^t W_{A,B})\big)< n$;
\item[\rm (iii)] $u_Y^t W_{A,B}\cap E^+_m \ne \{0\}$;
\item[\rm (iv)] $(u_Y^t W_{A,B})^\perp\cap E^-_n \ne \{0\}$;
\item[\rm (v)]
$\dim \Big(\pi^+_m\left((u_Y^t W_{A,B})^\perp\right)\Big) < m$;
\item[\rm (vi)] $Y^t\in \cH_{D,-C}$, where $C\in  \Mat{m}{m}$ and $D\in  \Mat{m}{n}$ are such that
the columns of $
(C | D)^t$ form a basis for $W_{A,B}^\perp$.
\end{itemize}

\end{lemma}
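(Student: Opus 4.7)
The plan is to prove the six equivalences by establishing the chain (i) $\Leftrightarrow$ (ii) $\Leftrightarrow$ (iii) $\Leftrightarrow$ (iv) $\Leftrightarrow$ (v) $\Leftrightarrow$ (vi), each step being an elementary linear-algebraic observation once the right matrix identity is in hand.

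First I would compute $u_Y^t W_{A,B}$ explicitly. Since $u_Y^t = \begin{pmatrix} I_m & 0 \\ Y^t & I_n\end{pmatrix}$ and $W_{A,B}$ is the column span of $(A|B)^t = \begin{pmatrix} A^t \\ B^t\end{pmatrix}$, a direct block multiplication gives
\[
u_Y^t\begin{pmatrix} A^t \\ B^t\end{pmatrix}=\begin{pmatrix} A^t \\ Y^tA^t+B^t\end{pmatrix}=\begin{pmatrix} A^t \\ (AY+B)^t\end{pmatrix}.
\]
Applying $\pi_n^-$ projects onto the lower block $(AY+B)^t$, whose column span has dimension $\rank(AY+B)$. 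Thus (ii) is equivalent to $\rank(AY+B)<n$, which is exactly (i). The equivalence (ii) $\Leftrightarrow$ (iii) then follows from rank-nullity applied to $\pi_n^-\big|_{u_Y^tW_{A,B}}$: its kernel is $u_Y^tW_{A,B}\cap\ker\pi_n^-=u_Y^tW_{A,B}\cap E_m^+$, and since $\dim(u_Y^tW_{A,B})=n$, nontriviality of this kernel is the same as the image having dimension less than $n$.

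Next I would handle the symmetry (iii) $\Leftrightarrow$ (iv) $\Leftrightarrow$ (v) by standard orthogonal complement duality. Writing $V=u_Y^tW_{A,B}$, one has $V\cap E_m^+\ne\{0\}$ iff $V^\perp+E_n^-\ne W$ (taking perps and using $(E_m^+)^\perp=E_n^-$), which in turn is equivalent to $V^\perp\cap E_n^-\ne\{0\}$ by dimension count in $V^\perp$. That gives (iii) $\Leftrightarrow$ (iv). Now $\dim V^\perp=m$ and $\ker(\pi_m^+\big|_{V^\perp})=V^\perp\cap E_n^-$, so rank-nullity gives (iv) $\Leftrightarrow$ (v).

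Finally I would derive (v) $\Leftrightarrow$ (vi) by explicitly identifying $V^\perp$. The key observation is $(u_Y^tW_{A,B})^\perp=u_Y^{-1}W_{A,B}^\perp$ (because $\langle u_Y^t x,y\rangle=\langle x,u_Yy\rangle$), combined with
\[
u_Y^{-1}\begin{pmatrix} C^t \\ D^t\end{pmatrix}=\begin{pmatrix} I_m & -Y \\ 0 & I_n\end{pmatrix}\begin{pmatrix} C^t \\ D^t\end{pmatrix}=\begin{pmatrix} C^t-YD^t \\ D^t\end{pmatrix}=\begin{pmatrix} (C-DY^t)^t \\ D^t\end{pmatrix}.
\]
Thus $\pi_m^+(V^\perp)$ is the column span of $(C-DY^t)^t$, and (v) becomes $\rank(DY^t-C)<m$, i.e.\ $\det(DY^t-C)=0$, which is precisely (vi).

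There is no real obstacle here: the whole proof is a sequence of block-matrix manipulations and perpendicular/rank-nullity arguments. The only point requiring mild care is keeping the transposes straight so that the identifications $W_{A,B}=\operatorname{span}$ of columns of $(A|B)^t$ and $W_{A,B}^\perp=\operatorname{span}$ of columns of $(C|D)^t$ produce, after applying $u_Y^t$ and $u_Y^{-1}$ respectively, the matrices $(AY+B)^t$ and $(C-DY^t)^t$ whose ranks control conditions (i) and (vi).
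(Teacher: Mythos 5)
Your proof is correct and follows essentially the same route as the paper: the same block computations $u_Y^t(A|B)^t=\begin{pmatrix}A^t\\(AY+B)^t\end{pmatrix}$ and $u_{-Y}(C|D)^t=\begin{pmatrix}C^t-YD^t\\D^t\end{pmatrix}$, with the intermediate equivalences handled by the same orthogonal-complement and dimension arguments (you merely spell out via rank--nullity the steps the paper dismisses as a simple linear-algebra exercise).
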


\begin{proof}
Note that  $u_Y^t W_{A,B}$ is spanned by the columns of the matrix
$$
u_Y^t\begin{pmatrix}A^t \\ B^t\end{pmatrix} =
\begin{pmatrix}
   I_m & 0 \\
   Y^t & I_n
       \end{pmatrix}\begin{pmatrix}A^t \\ B^t\end{pmatrix}  = \begin{pmatrix}A^t \\ (AY + B)^t \end{pmatrix},
       $$
and $\pi^-_n(u_Y^t W_{A,B})$
is therefore spanned by the columns of $(AY + B)^t$. Since the latter matrix has rank less than $n$ if and only if (i) holds, the equivalence between (i) and (ii) follows. The equivalence (ii) $\iff$ (iii), together with  (iv) $\iff$ (v),  is a simple exercise in linear algebra. To derive  (iii) $\iff$ (iv), observe that dimensions of $(u_Y^t W_{A,B})^\perp$ and $  E^-_n$   add up to $\dim(W)$, therefore these two subspaces  have trivial
intersection if and only if the same is true for their orthogonal
complements.

Finally, to establish (v) $\iff$ (vi), it suffices to note that $$(u_Y^t W_{A,B})^\perp = \big((u_Y^t)^t\big)^{-1} W_{A,B}^\perp = u_{-Y} W_{A,B}^\perp$$ is spanned by the (linearly independent) columns of the matrix
$$
u_{-Y}\begin{pmatrix}C^t \\ D^t\end{pmatrix} =
\begin{pmatrix}
   I_m & -Y \\
   0 & I_n
       \end{pmatrix}\begin{pmatrix}C^t \\ D^t\end{pmatrix}  = \begin{pmatrix}C^t - YD^t \\ D^t \end{pmatrix}
       $$
and its orthogonal projection  onto $E^+_m$ is therefore spanned by the columns of $$C^t - YD^t = -(DY^t - C)^t\,.$$
Hence (v) holds if and only if $\det(DY^t - C) = 0$.
\end{proof}

\bigskip

\ignore{We will use the above lemma to establish the invariance of  weak non-planarity under transposition. For this we will need to consider the space $W^*$ dual to $W$, which we will identify with the $(m+n-1)$-fold exterior power of $W$, and let $(\vv e_1^*,\dots,\vv e_{m+n}^*)$ be the basis of $W^*$ dual to $(\vv e_1,\dots,\vv e_{m+n})$ and $\langle\cdot,\cdot\rangle$ the canonical pairing with $\langle\vv e_i,\vv e_j^*\rangle = \delta_{ij}$. The action of $u_Y^t$ on $W$ induces a dual action on $W^*$ which with respect to the basis   $(\vv e_1^*,\dots,\vv e_{m+n}^*)$ is given by $u_Y$.}

Now let $F:U\to\Mat{m}{n}$ be a map from an open subset $U$ of a Euclidean space $X$, $\mu$ a measure on $X$, and denote by $F^t:U\to  \Mat{n}{m}$ the map given by $F^t(x)=\big(F(x)\big)^t$.


\begin{corollary}\label{l:05}
$(F,\mu)$ is weakly non-planar if and only if $(F^t,\mu)$ is weakly non-planar.
\end{corollary}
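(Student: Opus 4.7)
The plan is to derive Corollary~\ref{l:05} as an almost immediate consequence of the equivalence (i) $\iff$ (vi) in Lemma~\ref{elem}. That equivalence already packages exactly the statement we need on the level of individual matrices: $Y \in \cH_{A,B}$ precisely when $Y^t \in \cH_{D,-C}$, where $(C\mid D)^t$ has columns forming a basis of $W_{A,B}^\perp$. So the whole task is to promote this pointwise correspondence to a matching between the collections $\mathbf{H}_{m,n}$ and $\mathbf{H}_{n,m}$ that shows up in the definition of weak non-planarity \equ{nonplgeneral}.

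First I would check that the bookkeeping is consistent. Because $\rank(A\mid B)=n$, the subspace $W_{A,B}\subset W$ has dimension $n$, hence $W_{A,B}^\perp$ has dimension $m$. Writing a basis of $W_{A,B}^\perp$ as columns of $(C\mid D)^t$ with $C\in \Mat{m}{m}$ and $D\in \Mat{m}{n}$, linear independence forces $\rank(C\mid D)=m$, so $\rank(D\mid -C)=m$ and $\cH_{D,-C}\in \mathbf{H}_{n,m}$ as needed. A different choice of basis multiplies $(C\mid D)^t$ on the right by an invertible $m\times m$ matrix, which multiplies $\det(DY^t-C)$ by a nonzero scalar; hence $\cH_{D,-C}$ depends only on $W_{A,B}^\perp$ (equivalently on $W_{A,B}$), so this yields a well-defined map $\mathbf{H}_{m,n}\to \mathbf{H}_{n,m}$. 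Applying Lemma~\ref{elem} with the roles of $m$ and $n$ swapped produces an analogous map $\mathbf{H}_{n,m}\to \mathbf{H}_{m,n}$; taking orthogonal complements twice returns us to the original subspace, so the two maps are mutual inverses.

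Combining these observations, for any ball $V\subset U$ centered in $\supp\,\mu$ the containment $F(V\cap\supp\,\mu)\subset \cH_{A,B}$ is equivalent to $F^t(V\cap\supp\,\mu)\subset \cH_{D,-C}$, and as $(A,B)$ ranges over all matrices giving sets in $\mathbf{H}_{m,n}$ the pair $(D,-C)$ ranges over all data producing sets in $\mathbf{H}_{n,m}$. Thus the weak non-planarity condition \equ{nonplgeneral} for $(F,\mu)$ is literally the same as that for $(F^t,\mu)$, which proves the corollary. The only potential obstacle is the routine verification of the matrix-size bookkeeping and of basis-independence described above; once that is in hand, the statement is a direct corollary of Lemma~\ref{elem}.
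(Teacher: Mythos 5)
Your argument is correct and follows essentially the same route as the paper: both rest on the equivalence (i) $\iff$ (vi) of Lemma~\ref{elem}, which converts a containment $F(V\cap\supp\,\mu)\subset \cH_{A,B}$ with $\rank(A|B)=n$ into $F^t(V\cap\supp\,\mu)\subset \cH_{D,-C}$ with $\rank(D|\,{-C})=m$, the paper handling the reverse implication simply by symmetry (apply the same argument to $F^t$). Your extra bookkeeping (dimension count, basis-independence, the correspondence between $\mathbf{H}_{m,n}$ and $\mathbf{H}_{n,m}$) is a sound, slightly more detailed rendering of the same proof.
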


\begin{proof}
Suppose that $(F,\mu)$ is not weakly non-planar, that is \equ{nonplgeneral} does not hold. Then there exists a ball $V$ centred in $\supp\,\mu$ such that $F(V\cap\supp\,\mu)\subset \cH_{A,B}$ for some $A\in\Mat{n}{m}$ and $B\in\Mat{n}{n}$ with $\rank(A|B)=n$. Using the equivalence (i) $\iff$ (vi) of the previous lemma, we conclude that there exist $C\in \Mat{m}{m}$ and $D\in \Mat{m}{n}$ such that
$$\rank(-C|D)=\rank(C|D)=m\quad\text{and}\quad F^t(V\cap\supp\,\mu)\subset \cH_{D,-C}\,;
$$
 hence $F^t$ is not weakly non-planar. Converse is proved similarly.
\end{proof}

\bigskip

The main theorem
will be derived using the approach based on dynamics on the space of lattices, which was first developed by Kleinbock and Margulis in  \cite{Kleinbock-Margulis-98:MR1652916} and then extended in \cite{Kleinbock-Margulis-Wang-09}.
The key observation here is the fact that   \di\ properties of $Y\in\Mat{m}{n}$ can be expressed in terms of
 of the action of diagonal matrices in $\SL_{m+n}( \R)$ on $$
 u_Y\Z^{m+n} = \left\{\begin{pmatrix} Y\vq - \vp\\\vq\end{pmatrix} : \vp\in\Z^m,\  \vq\in\Z^n\right\}\,.
$$
The latter object is a lattice in $W$ which is viewed as a point of the homogeneous space $\SL_{m+n}( \R)/\SL_{m+n}( \Z)$ of unimodular lattices in $W$. However we are able to use the final outcome of the techniques developed in \cite{Kleinbock-Margulis-Wang-09} and preceding papers, thus in this paper there is no need to state the quantitative nondivergence estimates (\cite[Theorem~5.2]{Kleinbock-Margulis-98:MR1652916}, \cite[Theorem~4.3]{Kleinbock-Lindenstrauss-Weiss-04:MR2134453}) and the correspondence between \da\ and dynamics on the space of lattices  \cite[Proposition 3.1]{Kleinbock-Margulis-Wang-09}. The reader is referred to the aforementioned paper, as well as to survey papers \cite{dima pamq, dima clay} for more details.

\medskip

Now let us  introduce some more notation.
For an $(m+n)$-tuple $\vv t = (t_1,\dots,t_{m+n})$ of real numbers, 
define
 $$g_\vt \df \diag(e^{t_1}, \ldots,
e^{ t_m}, e^{-t_{m+1}}, \ldots,
e^{-t_{m+n}})
\,.$$
We will denote by $\fa
$
the set of $(m+n)$-tuples $\vt$
such that
\eq{sumequal}{
t_1,\dots,t_{m+n} > 0
\quad \mathrm{and}\quad
\sum_{i = 1}^m t_i =\sum_{j = 1}^{n} t_{m+j} \,.
}
For a fixed $\vt\in\fa$ let us denote by $\cE^+_\vt$ the span of all
the eigenvectors of $g_\vt$ in  $\bigwedge(W)$ with eigenvalues greater or equal to one (in other words, those
which are not contracted by the $g_\vt$-action).
It is easy to see that $\cE^+_\vt$ is spanned by
elements $\vv e_I \wedge \vv e_J$ where
$I \subset
\{1,\dots,m\}$ and $J \subset
\{m+1,\dots,m+n\}$ are such
that
\eq{condeplus}{\sum_{i\in I}t_{i} \ge \sum_{j \in J}t_{j}\,.}
We will let
 $\pi^+_\vt$ be the orthogonal projection onto $\cE^+_\vt$.

For
$1\le \ell \le m+n-1$,   let us  denote by $
\mathcal{W}^\ell$
the set of {\sl decomposable elements\/} of $\bigwedge^\ell(W)$ (that is, elements which can be written as $\vw = \vv v_1 \wedge\dots \wedge \vv v_\ell$, where $\vv v_i\in W$), and denote $
\mathcal{W} \df \bigcup_{\ell = 1 }^{m+n-1}\mathcal{W}^\ell$.  Up to a sign the nonzero elements of $
\mathcal{W}^\ell$ can be identified with subgroups of
$W$ of rank $\ell$.
\ignore{A special attention will be paid to  decomposable elements 
with integer coordinates: we will let
 $$
\mathcal{W}^\ell_\Z\df \mathcal{W}^\ell \cap  \textstyle\bigwedge(\Z^{m+n})\quad\text{and}\quad\mathcal{W}_\Z \df \bigcup_{\ell = 1 }^{m+n-1}\mathcal{W}^\ell_\Z\subset \textstyle\bigwedge (\Z^{m+n})\,.
$$}

The next statement is a simplified version of  Corollary 5.1 
from \cite{Kleinbock-Margulis-Wang-09}:

\begin{theorem}\label{thm: criterion} 
Let     an open subset  $U$  of $\R^d$, a  continuous
map
  $F: U\to\Mat{m}{n}
$
and 
 a  Federer
 measure   $\mu$ on
$U$ be given.
Suppose that 
$(F,\mu)$ is good, and also 
that  for any  ball
$V\subset U$ with $\mu(V) > 0$ there exists positive  $c
$ such that
 \eq{cor condition}{
 \|\pi^+_\vt u_{F(\cdot)}\vw\|_{\mu,V} \ge c \quad \text{ for all } \vw\in\fw_\Z\bnz \text{ and }\vt\in\fa
 \,.}
Then $F_*\mu$ is strongly extremal.
\ignore{\item[\rm (ii)] {\rm \cite[Corollary 5.2]{Kleinbock-Margulis-Wang-09}} On the other hand, suppose that there exists a subset $V\subset U$ with $\mu(V) > 0$ such that
 \eq{cor condition z}{
\forall\,x\in V\text{ one has }\pi^+_\vt u_{F(\cdot)}\vw = 0 \quad \text{ for some } \vw\in\fw_\Z\bnz \text{ and }\vt\in\fa
 \,.}
Then  $F_*\mu$ is not strongly extremal.
\end{itemize}}
\ignore{ if and only if
 for any  ball
$V\subset U$ with $\mu(V) > 0$ 
and any $\beta
> 0$  there exists $T  > 0$ such that 
and any $\vw\in\fw$ one has
\eq{condition}{\|g_\vt u_{F(\cdot)}\vw\|_{\mu,V} \geq e^{-\beta t}\quad\forall\,\vw\in\fw_\Z \text{ and any }
\,\vt\in\fa\text{ with }t\ge T\,.}}
\end{theorem}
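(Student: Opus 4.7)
The plan is to derive both parts from the by-now-standard scheme that couples the Dani--Kleinbock--Margulis correspondence with the quantitative nondivergence estimates on the space of unimodular lattices $\SL_{m+n}(\R)/\SL_{m+n}(\Z)$. The Dani correspondence (as recorded in [KMW, Prop.~3.1]) translates the VWMA property of $F(x)$ into the following dynamical assertion: $F(x)$ is VWMA iff for some $\beta>0$ there exist arbitrarily large $\vt\in\fa$ such that the lattice $g_\vt u_{F(x)}\Z^{m+n}$ contains a nonzero vector of norm at most $e^{-\beta\sum_{i=1}^{m}t_i}$. By Minkowski/Mahler, control on such short vectors, and more generally on short primitive sublattices of any rank $\ell$, is equivalent to control on short nonzero decomposable integer elements $g_\vt u_{F(x)}\vw$ with $\vw\in\fw_\Z$.

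For part (i), the key observation is that $\pi^+_\vt$ commutes with $g_\vt$ and that $g_\vt$ is non-contracting on $\cE^+_\vt$, so
$$\|g_\vt u_{F(x)}\vw\|\ \ge\ \|\pi^+_\vt g_\vt u_{F(x)}\vw\|\ =\ \|g_\vt \pi^+_\vt u_{F(x)}\vw\|\ \ge\ \|\pi^+_\vt u_{F(x)}\vw\|.$$
Under the assumption that $(F,\mu)$ is good and satisfies \equ{cor condition}, one can apply the quantitative nondivergence theorem of Kleinbock--Margulis in its Federer/$(C,\alpha)$-good form due to Kleinbock--Lindenstrauss--Weiss: the functions $x\mapsto$ (coordinates of $u_{F(x)}\vw$) on $\cE^+_\vt$ are good, their $\|\cdot\|_{\mu,V}$-norm is bounded below by $c$, and therefore
$$\mu\bigl(\{x\in V:\exists\,\vw\in\fw_\Z\bnz,\ \|g_\vt u_{F(x)}\vw\|<\eta\}\bigr)\ \le\ C'(\eta/c)^{\alpha}\mu(V).$$
Setting $\eta=e^{-\beta\sum_{i=1}^{m}t_i}$ for sufficiently small $\beta>0$ and summing over a suitable discretization of $\fa$ yields a convergent Borel--Cantelli series, so $\mu$-a.e.\ $F(x)$ is not VWMA.

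For part (ii), the hypothesis \equ{cor condition z} produces, for each $x\in V$, an integer decomposable $\vw=\vw(x)\in\fw_\Z\bnz$ and $\vt=\vt(x)\in\fa$ such that $\pi^+_\vt u_{F(x)}\vw=0$; equivalently $u_{F(x)}\vw$ lies entirely in the $g_\vt$-contracting subspace. Rescaling by $s>0$, the norm $\|g_{s\vt}u_{F(x)}\vw\|$ decays exponentially in $s$. Since $\vw$ is a primitive integer decomposable element, it represents a primitive sublattice of $\Z^{m+n}$, and the exponential decay of its covolume under $g_{s\vt}$ forces, by Minkowski's second theorem, short vectors in $g_{s\vt}u_{F(x)}\Z^{m+n}$ with quantitative rate; applying the Dani correspondence in the opposite direction yields that $F(x)$ is VWMA for every $x\in V$, and $\mu(V)>0$ then shows that $F_*\mu$ fails to be strongly extremal.

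The main obstacle is the uniformity in $\vt\in\fa$ needed in part (i): to sum the above nondivergence estimate one must choose a countable $\vt$-grid whose mesh is independent of scale, and verify that the goodness constants $(C,\alpha)$ for the coordinate functions of $\pi^+_\vt u_{F(x)}\vw$ do not degenerate as $\vt$ varies, simultaneously over all ranks $\ell=1,\dots,m+n-1$ and all $\vw\in\fw^\ell_\Z$. This is precisely the technical content of the reduction from the general nondivergence theorem to the clean statement above, worked out in detail in Sections~3--5 of [KMW], and I would follow their argument verbatim.
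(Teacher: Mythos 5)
The paper does not prove Theorem~\ref{thm: criterion} at all: it is explicitly imported as a black box, attributed to \cite[Corollaries 5.1 and 5.2]{Kleinbock-Margulis-Wang-09}, and the authors write that it is a ``simplified version'' of those corollaries. So there is no in-paper proof to compare against; the appropriate response to this statement in the context of the paper is simply to cite [KMW], which is exactly what you do when you write that you ``would follow their argument verbatim'' for the uniform nondivergence step.

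That said, your outline of how the proof goes in [KMW] is accurate in its main points: the chain $\|g_\vt u_{F(x)}\vw\| \ge \|\pi^+_\vt u_{F(x)}\vw\|$ via non-contraction of $g_\vt$ on $\cE^+_\vt$, the quantitative nondivergence estimate in its Federer/$(C,\alpha)$-good form, a Borel--Cantelli sum over a discretization of $\fa$ for part (i), and for part (ii) the dual argument that rescaling $\vt\mapsto s\vt$ drives $\|g_{s\vt}u_{F(x)}\vw\|\to 0$ exponentially, hence (by Minkowski) the lattice escapes to the cusp along that ray for a fixed $\vw$, which via the Dani--Kleinbock--Margulis correspondence forces $F(x)$ to be VWMA. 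Two small corrections: (a) $\fw_\Z$ as defined in the paper does not consist of \emph{primitive} decomposable elements, only integer ones, though this is harmless since the vanishing condition $\pi^+_\vt u_{F(x)}\vw=0$ is scale-invariant; (b) in part (i) you need to control short vectors at \emph{every} level $\ell$ at once, and [KMW] do this by running the nondivergence estimate on all of $\fw_\Z$ simultaneously rather than arguing level by level, which is where the lower bound $c$ in \equ{cor condition} over all $\vw\in\fw_\Z\bnz$ enters. Neither affects the correctness of your high-level plan.
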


\ignore{Observe that part (ii) requires no additional assumptions on $F$ and $\mu$. Note also that   \equ{cor condition} and \equ{cor condition z} do not exhaust all the possibilities; s}
See also  \cite[Theorem~4.3]{Kleinbock-Margulis-Wang-09} for a necessary and sufficient condition for strong extremality in the class of good pairs $(F,\mu)$.

\medskip

Now we can proceed with the proof of our main theorem. 
\medskip

\noindent{\it Proof of Theorem~\ref{t2}.}
For $F$ and $\mu$ as in Theorem~\ref{t2}, we need to take a ball
$V\subset U$ with $\mu(V) > 0$ (which we can without loss of generality center at a point of $\supp\,\mu$) and find $c> 0$ such that \equ{cor condition} holds.
Since  $(F,\mu)$ is  weakly non-planar, from
 the equivalence (i) $\iff$ (vi) of Lemma~\ref{elem} we conclude that for any  $C\in  \Mat{m}{m}$ and $D\in  \Mat{m}{n}$ with $\rank(D|$\,$-C)=m$ one has
$\det\big(DF(x)^t - C\big) \ne 0$ for some $x\in\supp\,\mu\cap V$.  Equivalently, for any   $\vw\in
\mathcal{W}^m\bnz$, which we take to be the exterior product of columns of $\begin{pmatrix}-C^t \\ D^t\end{pmatrix}$, the orthogonal projection of $u_{F(x)}\vw$ onto $\bigwedge^m(E_m^+)$, which is equal to the exterior product of columns of
$$\begin{pmatrix}I & F(x)\end{pmatrix}\begin{pmatrix}-C^t \\ D^t\end{pmatrix} = -C^t + F(x) D^t = \big(DF(x)^t - C\big)^t\,,$$
 is nonzero for some $x\in\supp\,\mu\cap V$.

 \medskip

 Our next goal is to treat $\vw\in
\mathcal{W}^\ell$ with $\ell\ne m$ in a similar way. For this, let us consider the subspace $\cE^+$ of $\bigwedge(W)$ defined by
 \eq{basisplus}{ \mathcal E^+ \df \Span\big\{\vv e_I, \vv e_{\{1,\dots,m\}}\wedge \vv e_J: I \subset
\{1,\dots,m\},J \subset
\{m+1,\dots,m+n\}\big\}\,,}
or, equivalently, by
$$
\mathcal E^+ \cap \textstyle\bigwedge^\ell(W) = \begin{cases} \textstyle\bigwedge^\ell(E^+_m)\qquad\qquad\qquad\ \text{ if } \ell \le m\\
\R\vv e_{\{1,\dots,m\}}\wedge  \textstyle\bigwedge^{\ell-m}(E^-_n)\text{ if } \ell \ge m\end{cases}$$
In particular, $\mathcal E^+ \cap \textstyle\bigwedge^m(W) = \bigwedge^m(E^+_m)$ is one-dimensional and is spanned by $\vv e_{\{1,\dots,m\}}$.

\bigskip
The relevance of the space $\cE^+$ to our set-up is highlighted by

\begin{lemma}\label{eplus}  $\cE^+ = \bigcap_{\vt\in\fa}\cE^+_\vt$. \end{lemma}

\begin{proof} The direction $\subset$  is clear from \equ{basisplus} and the validity of \equ{condeplus} when either $J= \varnothing$ or $I = \{1,\dots,m\}$. Conversely, take $\vw\in \bigwedge(W)$ and suppose that  there exist a proper subset $I $ of
$\{1,\dots,m\}$ and a nonempty subset  $J$
of $\{m+1,\dots,m+n\}$ such that the orthogonal projection of $\vw$ onto $\vv e_I \wedge \vv e_J$ is not zero. Then choose $\vt\in\fa\bnz$ such that $t_i = 0$ when $i\in I$, and $t_j\ne 0$ when $j\in J$; this way $\vv e_I \wedge \vv e_J$ is contracted by $g_\vt$, which implies that $\vw$ is not contained in $\cE^+_\vt$.
 \end{proof}

\bigskip

Denote by $\pi^+$ the orthogonal projection $\bigwedge(W)\to\cE^+$; thus we have shown that
\eq{positive}{\|\pi^+ u_{F(\cdot)}\vw\|_{\mu,V} > 0\quad\forall\,\vw\in
\mathcal{W}^m\bnz\,.}
We now claim that the same is true for all  $\vw\in
\mathcal{W}\bnz$. Indeed, take
 $$\vw = \vv v_1 \wedge\dots \wedge \vv v_\ell\ne 0\,,$$ where $\ell < m$, and choose arbitrary $\vv v_{\ell +1},\dots, \vv v_m$ such that $\vv v_{1},\dots, \vv v_m$ are linearly independent.
Then $\pi^+\big(u_{F(x)}(\vv v_1 \wedge\dots \wedge \vv v_m)\big)$ being nonzero is equivalent to $\pi^+(u_{F(x)}\vv v_{1}),\dots, \pi^+(u_{F(x)}\vv v_m)$ being linearly independent, which implies $\pi^+(u_{F(x)}\vv v_{1}),\dots, \pi^+(u_{F(x)}\vv v_\ell)$  being linearly independent, i.e.\ $\pi^+(u_{F(x)}\vw)\ne 0$.

The case  $\ell > m$ can be treated in a dual fashion: if $\vw = \vv v_1 \wedge\dots \wedge \vv v_\ell\ne 0$ is such that $\pi^+(u_{F(x)} \vw)=  0$, then there exists $\vv v\in E^+_m$ which is orthogonal to all of $\pi^+(u_{F(x)}\vv v_{1}),\dots, \pi^+(u_{F(x)}\vv v_\ell)$, hence to all of $\pi^+(u_{F(x)}\vv v_{1}),\dots, \pi^+(u_{F(x)}\vv v_m)$, and the latter amounts to saying that  $\pi^+\big(u_{F(x)}(\vv v_1 \wedge\dots \wedge \vv v_m)\big)=  0$, contradicting \equ{positive}.

\medskip
Notice that we have proved that for any ball $V\subset U$   centered in $\supp\,\mu$ , the (continuous) function $$\vw\mapsto \|\pi^+(u_{F(\cdot)}\vw)\|_{\mu,V}$$ is nonzero on the intersection of $\mathcal{W}$ with the unit sphere in $\bigwedge(W)$, hence, by compactness, it has a uniform lower bound. Since $\|\vw\|\ge 1$ for any $\vw\in\fw_\Z\bnz$, it follows that for any $V$ as above there exists $c > 0$ such that
$$
 \|\pi^+ u_{F(\cdot)}\vw\|_{\mu,V} \ge c \quad \text{ for all } \vw\in\fw_\Z\bnz\,.
$$
This, in view of Lemma~\ref{eplus}, finishes the proof of \equ{cor condition}.

\hskip 6in $\boxtimes$
\ignore{
\bigskip

\noindent{\it Proof of Theorem~\ref{t3}.} 
We are given that the weak non-planarity of $(F,\mu)$ fails over $\Z$; that is, there exists a ball  $V\subset U$ centered in $\supp\,\mu$  and $A\in\Mat{n}{m}(\Z)$, $B\in\Mat{n}{n}(\Z)$ with $\rank(A|B)=n$ such that  $F(V\cap\supp\,\mu)$ is  contained in  $\cH_{A,B}$. Using the equivalence (i) $\iff$ (vi) of Lemma~\ref{elem} again, we find $\vw\in\mathcal{W}^m_\Z$ (the exterior product of columns of the matrix $\begin{pmatrix}-C^t \\ D^t\end{pmatrix}$ whose entries can be chosen to be integers) such that $\pi^+ u_{F(x)}\vw = 0$ for all $x\in \supp\,\mu\cap V$. Consequently, for any $x\in V\cap\supp\,\mu$ one has
$$u_{F(x)}\vw\in(\cE^+)^\perp = \bigcup_{\vt\in \fa}(\cE^+_\vt)^\perp$$
(the last equality holds  in view of  Lemma~\ref{eplus}). Therefore  \equ{cor condition z} is satisfied with $U'$ equal to $ \supp\,\mu\cap V$.

\hskip 6in $\boxtimes$

}

\section{
More about weak non-planarity}\label{more}

The set of strongly extremal matrices in $\Mat{m}{n}$ is invariant under various natural transformations. For example, it is invariant under non-singular rational transformations, in particular, under the permutations of rows and columns,  and, in view of Khintchine's Transference Principle \cite{SW}, under transpositions. Also, if a matrix $Y\in\Mat{m}{n}$ is strongly extremal then any submatrix of $Y$ is strongly extremal. We have already shown in \S\ref{proof} that weak non-planarity is invariant under transposition; in this section we demonstrate some additional  invariance properties.

As before, throughout this section $F:U\to\Mat{m}{n}$ denotes a map from an open subset $U$ of a Euclidean space $X$ and $\mu$ is a measure on $X$. The following statement shows the invariance of weak non-planarity under non-singular transformations.

\begin{lemma}\label{l:03}
Assume that $(F,\mu)$ is weakly non-planar. Let
$L\in\GL_m(\R)$ and $R\in\GL_n(\R)$ be given and let
$\tilde F:U\to\Mat{m}{n}$ be a map given by $\tilde F(x)=LF(x)R$ for $x\in U$. Then $(\tilde F,\mu)$ is weakly non-planar.
\end{lemma}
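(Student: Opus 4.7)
The plan is to argue by contraposition: if $(\tilde F,\mu)$ fails weak non-planarity, I produce a hypersurface $\cH_{A,B}\in\mathbf{H}_{m,n}$ locally containing $F$. Concretely, suppose there is a ball $V\subset U$ centered in $\supp\,\mu$ and matrices $\tilde A\in\Mat{n}{m}$, $\tilde B\in\Mat{n}{n}$ with $\rank(\tilde A|\tilde B)=n$ such that $\tilde F(V\cap\supp\,\mu)\subset\cH_{\tilde A,\tilde B}$, i.e.
$$\det\big(\tilde A\,L F(x)R+\tilde B\big)=0\qquad\text{for all }x\in V\cap\supp\,\mu.$$

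The first step is an algebraic factorization. Writing
$$\tilde A L F(x) R+\tilde B=\big(\tilde A L F(x)+\tilde B R^{-1}\big)R$$
and using $\det R\ne 0$, the vanishing of the determinant above is equivalent to
$$\det\big(A F(x)+B\big)=0,\qquad A\df \tilde A L,\ \ B\df \tilde B R^{-1}.$$
Thus $F(V\cap\supp\,\mu)\subset\cH_{A,B}$.

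The second step is to check that $(A|B)$ still has rank $n$, so that $\cH_{A,B}\in\mathbf{H}_{m,n}$. This is immediate from the block factorization
$$(A\,|\,B)=(\tilde A L\,|\,\tilde B R^{-1})=(\tilde A\,|\,\tilde B)\begin{pmatrix}L & 0\\ 0 & R^{-1}\end{pmatrix},$$
since the second factor lies in $\GL_{m+n}(\R)$ and therefore preserves the rank of $(\tilde A|\tilde B)$. Combining the two steps contradicts the weak non-planarity of $(F,\mu)$, proving the lemma.

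I do not anticipate any real obstacle here; the entire argument is the observation that the affine action $Y\mapsto LYR$ on $\Mat{m}{n}$ permutes the family $\mathbf{H}_{m,n}$, which is visible from the explicit formula $A=\tilde A L$, $B=\tilde B R^{-1}$ together with the block-diagonal invertibility check. By Corollary~\ref{l:05}, this also records the invariance of weak non-planarity under the dual action $Y\mapsto L Y R$ applied to $F^t$, but no additional work is needed.
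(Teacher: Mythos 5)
Your proof is correct and is essentially the paper's argument: both define $A=\tilde A L$, $B=\tilde B R^{-1}$, verify $\rank(A|B)=n$ via the block-diagonal factorization, and use the identity $\det(\tilde A\tilde F(x)+\tilde B)=\det(AF(x)+B)\det R$; you merely phrase it contrapositively while the paper argues directly. No gap.
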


\begin{proof}
Take any $\tilde A\in  \Mat{n}{m}$ and $\tilde B\in \Mat{n}{n}$
such that $\rank(\tilde A|\tilde B)=n$ and let $V\subset U$ be a ball centered in $\supp\,\mu$. Define $A=\tilde AL$ and
$B=\tilde BR^{-1}$. It is easily seen that
$$
(A|B)=(\tilde A|\tilde B)\left(\begin{array}{cc}
                    L & 0\\
                    0 & R^{-1}
                  \end{array}
\right),
$$
that is,  the product of $(\tilde A|\tilde B)$ by a  non-singular matrix; thus $\rank(A|B)=\rank(\tilde A|\tilde B)=n$. Since $(F,\mu)$ is weakly non-planar, $F(V\cap\supp\,\mu)\not\subset \cH_{A,B}$. Therefore, there exists $x\in V\cap\supp\,\mu$ such that $\det(AF(x)+B)\not=0$. 
Then
\begin{equation}\label{e:008}
\begin{array}[b]{rcl}
AF(x)+B & = & A(L^{-1}\tilde F(x)R^{-1})+B\\[1ex]
       & = & ((AL^{-1})\tilde F(x)+BR)R^{-1}\\[1ex]
       & = & (\tilde A\tilde F(x)+\tilde B)R^{-1}.
\end{array}
\end{equation}
Since $\det R\not=0$ and $\det(AF(x)+B)\not=0$, (\ref{e:008}) implies that $\det(\tilde A\tilde F(x)+\tilde B)\not=0$. This means that $\tilde F(V\cap\supp\,\mu)\not\subset \cH_{\tilde A,\tilde B}$. The proof is complete.
\end{proof}

\bigskip

Taking $L$ and $R$ to be $I_m$ and $I_n$ with permuted columns/rows readily implies (as a corollary of Lemma~\ref{l:03}) that \emph{weak non-planarity in invariant under permutations of rows and/or columns in $F$}. The next statement is a natural generalisation of Lemma~\ref{l:03}.

\begin{lemma}\label{l:04}
Assume that $(F,\mu)$ is weakly non-planar. Let $\tilde n\le n$, $\tilde m\le m$ and $L\in  \Mat{\tilde m}{m}$ and $R\in  \Mat{n}{\tilde n}$ and let $\tilde F:U\to  \Mat{\tilde m}{\tilde n}$ be a map given by $\tilde F(x)=LF(x)R$ for $x\in U$. If $\rank L=\tilde m$ and\/ $\rank R=\tilde n$ then $(\tilde F,\mu)$ is also weakly non-planar.
\end{lemma}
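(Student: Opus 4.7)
The plan is to reduce to Lemma~\ref{l:03} in two steps: first enlarge $L$ and $R$ to square invertible matrices, and then observe that weak non-planarity passes to submatrices. Since $\rank L = \tilde m$, its rows are linearly independent and can be completed to a basis of $\R^m$, so there exists $L'\in\GL_m(\R)$ whose first $\tilde m$ rows form $L$. Similarly, choose $R'\in\GL_n(\R)$ whose first $\tilde n$ columns form $R$. Setting $G(x)\df L'F(x)R'$, a direct block computation gives
$$
G(x)=\begin{pmatrix}\tilde F(x) & * \\ * & * \end{pmatrix},
$$
with $\tilde F(x)$ occupying the upper-left $\tilde m\times \tilde n$ block. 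By Lemma~\ref{l:03} the pair $(G,\mu)$ is weakly non-planar.

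It then remains to show that weak non-planarity is inherited by the upper-left $\tilde m\times \tilde n$ submatrix of any weakly non-planar $\Mat{m}{n}$-valued map. Given $\tilde A\in \Mat{\tilde n}{\tilde m}$ and $\tilde B\in \Mat{\tilde n}{\tilde n}$ with $\rank(\tilde A|\tilde B)=\tilde n$, together with a ball $V\subset U$ centered in $\supp\,\mu$, the plan is to build
$$
A=\begin{pmatrix}\tilde A & 0\\ 0 & 0\end{pmatrix}\in\Mat{n}{m}\qquad\text{and}\qquad B=\begin{pmatrix}\tilde B & 0\\ 0 & I_{n-\tilde n}\end{pmatrix}\in\Mat{n}{n},
$$
with blocks sized so that the upper-left corners match those of $G$. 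A quick block multiplication then gives
$$
AG(x)+B=\begin{pmatrix}\tilde A\tilde F(x)+\tilde B & \tilde A\,G_{12}(x)\\ 0 & I_{n-\tilde n}\end{pmatrix},
$$
so $\det(AG(x)+B)=\det(\tilde A\tilde F(x)+\tilde B)$. The rank condition $\rank(A|B)=n$ follows because the last $n-\tilde n$ rows of $(A|B)$ are supported on the block $I_{n-\tilde n}$ and are disjoint from the first $\tilde n$ rows, which contribute rank $\rank(\tilde A|\tilde B)=\tilde n$. Applying the weak non-planarity of $(G,\mu)$ to this $(A,B)$ produces $x\in V\cap\supp\,\mu$ with $\det(\tilde A\tilde F(x)+\tilde B)\ne 0$, completing the proof.

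The only potential subtlety is the bookkeeping of block dimensions when assembling $A$ and $B$ and verifying the rank; both are routine. No genuine obstacle arises because Lemma~\ref{l:03} handles the multiplication by invertible matrices, and the submatrix reduction is purely linear-algebraic.
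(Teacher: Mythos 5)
Your proof is correct and follows essentially the same route as the paper: reduce via Lemma~\ref{l:03} to the case where $\tilde F$ is a corner submatrix (you do this by completing $L,R$ to invertible $L',R'$, the paper by factoring $L=\tilde C L_0 C$, $R=DR_0\tilde D$ through the projections $L_0,R_0$), and then run the identical block construction of $A=\left(\begin{smallmatrix}\tilde A & 0\\ 0 & 0\end{smallmatrix}\right)$, $B=\left(\begin{smallmatrix}\tilde B & 0\\ 0 & I_{n-\tilde n}\end{smallmatrix}\right)$ with the same rank and determinant computations. The only difference is cosmetic bookkeeping in the reduction step, and your version is fine.
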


\begin{proof}
Since $\rank L=\tilde m$ and\/ $\rank R=\tilde n$, there are $C\in\GL_m(\R)$, $\tilde C\in\GL_{\tilde m}(\R)$, $D\in\GL_n(\R)$ and $\tilde D\in\GL_{\tilde n}(\R)$ such that $L=\tilde CL_0C$ and $R=DR_0\tilde D$, where $L_0=(I_{\tilde m}|0)$ and $R_0=(I_{\tilde n}|0)^t$. By Lemma~\ref{l:03}, $(F_1,\mu)$ is weakly non-planar, where $F_1(x)=CF(x)D$. Obviously, $\tilde F=\tilde CF_2(x)\tilde D$, where $F_2(x)=L_0F_1(x)R_0$. Therefore, by Lemma~\ref{l:03} again, the fact that $(\tilde F,\mu)$ is weakly non-planar would follow from the fact that $(F_2,\mu)$ is weakly non-planar. Thus, without loss of generality, within this proof we can simply assume that $L=L_0$ and $R=R_0$.

Take any $\tilde A\in \Mat{\tilde m}{\tilde n}$ and $\tilde B\in \Mat{\tilde n}{\tilde n}$ such that $\rank(\tilde A|\tilde B)=\tilde n$. Let
\begin{equation}\label{e:009}
A=\left(\begin{array}{ccc}
       \tilde A & 0 \\
       0 & 0
       \end{array}
\right)\in \Mat{n}{m}\qquad\text{and}\qquad
B=\left(\begin{array}{ccc}
       \tilde B & 0 \\
       0 & I_{n-\tilde n}
       \end{array}
\right)\in  \Mat{n}{n}.
\end{equation}
It is easily seen $\rank (A|B)=\rank(\tilde A|\tilde B)+n-\tilde n=n$. Take any ball $V$ centred in $\supp\,\mu$. Since $(F,\mu)$ is weakly non-planar, there is $x\in V\cap\supp\,\mu$ such that $\det(AF(x)+B)\not=0$. It is easily seen that $F(x)$ has the form
$$
F(x)=\left(\begin{array}{ccc}
       \tilde F(x) & * \\
       * & *
       \end{array}
\right),
$$
where $\tilde F(x)=L_0F(x)R_0\in \Mat{\tilde m}{\tilde n}$. Then using (\ref{e:009}) we get
$$
AF(x)+B=\left(\begin{array}{ccc}
       \tilde A\tilde F(x)+\tilde B & * \\
       0 & I_{n-\tilde n}
       \end{array}
\right).
$$
It follows that $\det(AF(x)+B)=\det(\tilde A\tilde F(x)+\tilde B)\not=0$, whence the claim of the lemma readily follows.
\end{proof}

\bigskip

Taking $L$ to be $L_0$ with permuted columns and $R$ to be $R_0$
with permuted rows readily implies (as a corollary of Lemma~\ref{l:04}) that \emph{any submatrix in a weakly non-planar $F$ is weakly non-planar}. Note that, combined with Proposition~\ref{2x2}, this shows that for any $m,n$ with $\min\{m,n\} > 1$ there exists a submanifold of $\Mat{m}{n}$ which is weakly but not strongly non-planar.

\bigskip

\ignore{Recall the notation from this morning: E is the space spanned
by e_1,...,e_m and \pi^+ is the projection of the m-th
exterior power of R^{m+n} onto the line passing through
the exterior product of e_1,...,e_m.

Now given m linearly independent vectors v_1,...,v_m in R^{m+n}
(the columns of the matrix [A | B] written vertically),
denote by w(x) the m-vector u_{F(x)} (v_1 \wedge ... \wedge v_m).
Then the nondegeneracy condition is: for any v_1,...,v_m as above,
the projection \pi^+( w(x) ) is nonzero for some x.

Here is a better way to say it: denote by W(x) the space corresponding
to w(x) (spanned by  u_{F(x)} v_1 , ..., u_{F(x)} v_m). Then an
equivalent way to phrase it will be: dim\big( \pi^+(W(x)\big) ) = m for some x

And now, an even better way:
for any  v_1,...,v_m as above, E \cap W(x)^\perp = \{0\} for some x
(E must be transversal to the orthogonal complement to W(x) for some x).

This is an easy exercise in linear algebra: if a vector v in E is orthogonal
to W(x), then all its vectors will project orthogonally to v, so the
exterior product of any m images must be zero.
Conversely, if dim\big( \pi^+(W(x)\big) ) < m, then a vector in E orthogonal to
\pi^+\big(W(x)\big) will be orthogonal to all W(x).

It will be convenient to  denote by $
\mathcal{W}^\ell$, where
$1\le \ell \le k$, the set of elements $\vw = \vv_1\wedge \dots\wedge \vv_\ell$ of
$\bigwedge^\ell(\Z^{k})$ where $\{\vv_1, \dots, \vv_\ell\in \Z^{k}\}$ can be completed
to a basis of $\Z^{k}$ (those are called {\sl primitive\/} $\ell$-tuples). In fact, up to a sign
elements of $\mathcal{W}^\ell$ can be identified with rational $\ell$-dimensional subspaces of
$\R^{k}$, or, equivalently, with primitive subgroups of
$\Z^{k}$ of rank $\ell$. We also let $$
\mathcal{W} \df \cup_{1\le \ell \le k}\mathcal{W}_\ell\subset \textstyle\bigwedge (\Z^{k})\,.$$
The  Euclidean norm and the inner product will  be extended  from
$\R^{k}$ to its exterior algebra; this way $\|\vw\|$ is equal to the covolume of the subgroup corresponding to $\vw$.
}

In the final part of this section we will talk about products of weakly non-planar measures.
In essence, strongly non-planar (and thus weakly non-planar) manifolds given by (\ref{e:005}) are products of non-planar rows. One can generalise this construction by considering products of matrices with arbitrary dimensions. For the rest of the section we will assume that $X_1$ and $X_2$ are two Euclidean spaces and $\mu_1$ and $\mu_2$ are Radon measures on $X_1$ and $X_2$ respectively.

\begin{lemma}\label{l:06}
For $i=1,2$ let $U_i$ be an open set is $X_i$ and let\/ $F_i:U_i\to \Mat{m_i}{n}(\R)$\/ be given. Let $\mu=\mu_1\times\mu_2$ be the product measure over $X=X_1\times X_2$ and let $F:U\to\Mat{m}{n}$, where $U=U_1\times U_2$ and $m=m_1+m_2$, be given by
\begin{equation}\label{prod}
F(x_1,x_2)\df \left(\begin{array}{c} F_1(x_1)\\ F_2(x_2)\end{array}\right).
\end{equation}
Assume that $(F_1,\mu_1)$ and $(F_2,\mu_2)$ are weakly non-planar. Then $(F,\mu)$ is weakly non-planar.
\end{lemma}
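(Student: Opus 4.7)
The plan is to prove the contrapositive. Suppose $(F,\mu)$ is not weakly non-planar: there exist a ball $V\subset U_1\times U_2$ of radius $r$ centered at some $(x_1^\circ,x_2^\circ)\in\supp\,\mu$, together with $A\in\Mat{n}{m}$ and $B\in\Mat{n}{n}$ satisfying $\rank(A|B)=n$, such that $F(V\cap\supp\,\mu)\subset\cH_{A,B}$. Using the identity $\supp(\mu_1\times\mu_2)=\supp\,\mu_1\times\supp\,\mu_2$, the elementary fact that the product $V_1\times V_2$ of the balls $V_i\subset X_i$ of radius $r/\sqrt 2$ centered at $x_i^\circ$ lies inside $V$, and the block decomposition $A=(A_1\,|\,A_2)$ with $A_i\in\Mat{n}{m_i}$, the hypothesis reduces to
$$\det\bigl(A_1F_1(x_1)+A_2F_2(x_2)+B\bigr)=0\qquad\text{on }(V_1\cap\supp\,\mu_1)\times(V_2\cap\supp\,\mu_2).$$

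The crux will be the Claim that some $x_2^\ast\in V_2\cap\supp\,\mu_2$ satisfies $\rank(A_1\,|\,A_2F_2(x_2^\ast)+B)=n$. Granting this and setting $B^\ast\df A_2F_2(x_2^\ast)+B$, one has $\cH_{A_1,B^\ast}\in\mathbf{H}_{m_1,n}$; weak non-planarity of $(F_1,\mu_1)$ then produces some $x_1^\ast\in V_1\cap\supp\,\mu_1$ with $\det(A_1F_1(x_1^\ast)+B^\ast)\ne 0$, contradicting the displayed identity at $(x_1^\ast,x_2^\ast)$.

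To prove the Claim I argue by contradiction: suppose $\rank(A_1\,|\,A_2F_2(x_2)+B)<n$ for every $x_2\in V_2\cap\supp\,\mu_2$, equivalently, some nonzero row vector $v$ satisfies both $vA_1=0$ and $v(A_2F_2(x_2)+B)=0$. Let $L=\{v\in\R^n:vA_1=0\}$ and $k=\dim L$. The assumption $\rank(A|B)=n$ forces $v\mapsto(vA_2,vB)$ to be injective on $L$, so after fixing a basis $v_1,\dots,v_k$ of $L$ the matrices $W\in\Mat{m_2}{k}$ and $U\in\Mat{n}{k}$ with columns $(v_iA_2)^t$ and $(v_iB)^t$ respectively satisfy $\rank\begin{pmatrix}W\\ U\end{pmatrix}=k$. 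A direct calculation, writing $v=\sum c_iv_i$ and transposing, restates the rank-drop condition as
$$\rank\bigl(F_2(x_2)^tW+U\bigr)<k\qquad\text{for every }x_2\in V_2\cap\supp\,\mu_2.$$

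The final step is an extension trick: because the $k$ columns of $\begin{pmatrix}W\\ U\end{pmatrix}\in\Mat{m_2+n}{k}$ are linearly independent in $\R^{m_2+n}$, I can adjoin $n-k$ further columns to obtain $\tilde W\in\Mat{m_2}{n}$, $\tilde U\in\Mat{n}{n}$ with $\rank(\tilde W^t\,|\,\tilde U^t)=n$. Weak non-planarity of $(F_2,\mu_2)$ applied to $\cH_{\tilde W^t,\tilde U^t}\in\mathbf{H}_{m_2,n}$ then supplies $x_2\in V_2\cap\supp\,\mu_2$ with $\det(\tilde W^tF_2(x_2)+\tilde U^t)\ne 0$. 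Since the top $k$ rows of this $n\times n$ matrix coincide with $(F_2(x_2)^tW+U)^t$, they are linearly independent, forcing $\rank(F_2(x_2)^tW+U)=k$, which contradicts the displayed rank-drop statement. I expect this extension step, together with the transpose bookkeeping that turns a $k\times k$ rank drop for $F_2$ into an $n\times n$ determinantal condition lying in $\mathbf{H}_{m_2,n}$, to be the main technical point; once the Claim is isolated, everything else is essentially direct application of the hypotheses.
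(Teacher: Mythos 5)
Your proof is correct, and its skeleton is the same as the paper's: both reduce the problem to the key claim that $\rank\bigl(A_1\,|\,A_2F_2(x_2)+B\bigr)=n$ for some $x_2\in V_2\cap\supp\,\mu_2$ (the paper's \eqref{e:042}), after which weak non-planarity of $(F_1,\mu_1)$ finishes the argument, and both establish that claim by the same device of completing a rank-deficient rectangular system to a full-rank $n$-row system and invoking weak non-planarity of $(F_2,\mu_2)$. The difference is only in the bookkeeping: the paper runs Gaussian elimination on $(A_2|B)$, writes $(A_1|A_2|B)$ in block form with $r=\rank(A_2|B)$, and quotes a separately stated rank lemma (Lemma~\ref{l:07}, itself proved by adjoining $n-r$ rows), whereas you work with the left kernel $L$ of $A_1$, transpose to obtain the $n\times k$ matrix $F_2(x_2)^tW+U$ with $k=\dim L=n-\rank A_1$, and re-derive that same rank lemma inline by adjoining $n-k$ columns --- your extension trick is exactly the paper's proof of Lemma~\ref{l:07} applied to $(W^t|U^t)$. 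Since $k\le r$ the sublemma is applied to slightly different matrices, but the content is identical; your contrapositive framing, the passage from the Euclidean ball $V$ to the product $V_1\times V_2$ of genuine balls in $X_1,X_2$, and the degenerate case $L=\{0\}$ (where the Claim is immediate) are all handled correctly, if only implicitly in the last instance.
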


In view of Corollary~\ref{l:05} the following statement is equivalent to Lemma~\ref{l:06}.

\begin{lemma}\label{l:08}
For $i=1,2$ let $U_i$ be an open set is $X_i$ and let\/ $F_i:U_i\to \Mat{m}{n_i}(\R)$\/ be given. Let $\mu=\mu_1\times\mu_2$ be the product measure over $X=X_1\times X_2$ and let $F:U\to\Mat{m}{n}$, where $U=U_1\times U_2$ and $n=n_1+n_2$, be given by
$$
F(x_1,x_2)\df \Big(\ F_1(x_1)\ \ \ F_2(x_2)\ \Big).
$$
Assume that $(F_1,\mu_1)$ and $(F_2,\mu_2)$ are weakly non-planar. Then $(F,\mu)$ is weakly non-planar.
\end{lemma}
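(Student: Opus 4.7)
The plan is to derive Lemma~\ref{l:08} directly from Lemma~\ref{l:06} by invoking the transposition invariance of weak non-planarity (Corollary~\ref{l:05}) twice, sandwiching a single application of Lemma~\ref{l:06}. The key point is that row-concatenation and column-concatenation of matrix-valued maps are interchanged by transposition, while weak non-planarity is preserved under transposition.

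More precisely, suppose $F_1: U_1 \to \Mat{m}{n_1}$ and $F_2: U_2 \to \Mat{m}{n_2}$ are such that $(F_1,\mu_1)$ and $(F_2,\mu_2)$ are weakly non-planar. By Corollary~\ref{l:05}, the transposed maps $F_1^t: U_1 \to \Mat{n_1}{m}$ and $F_2^t: U_2 \to \Mat{n_2}{m}$ yield weakly non-planar pairs $(F_1^t,\mu_1)$ and $(F_2^t,\mu_2)$. The map $F: U_1\times U_2 \to \Mat{m}{n_1+n_2}$ obtained by horizontal concatenation, $F(x_1,x_2) = \bigl(\,F_1(x_1)\ \ F_2(x_2)\,\bigr)$, satisfies
\[
F(x_1,x_2)^t = \begin{pmatrix} F_1(x_1)^t \\ F_2(x_2)^t \end{pmatrix},
\]
which is exactly the vertical concatenation of $F_1^t$ and $F_2^t$ in the sense of \equ{prod}, now with dimensions $n_1+n_2$ rows and $m$ columns.

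Applying Lemma~\ref{l:06} to $F_1^t$ and $F_2^t$ with the product measure $\mu=\mu_1\times\mu_2$, we conclude that $(F^t,\mu)$ is weakly non-planar. A final application of Corollary~\ref{l:05} (now in the reverse direction, transposing $F^t$ back to $F$) yields that $(F,\mu)$ is weakly non-planar, as required.

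The main (indeed only) conceptual obstacle is verifying that the transposition-duality symbol manipulation actually matches the definitions: one must check that the transpose of the horizontal concatenation is precisely the vertical concatenation of the transposes, and that Corollary~\ref{l:05} applies verbatim to maps with arbitrary target $\Mat{m}{n}$ (which it does, since its statement places no restriction on $m$ and $n$). Beyond that, no additional computation is required, which explains the remark in the text that Lemmas~\ref{l:06} and \ref{l:08} are equivalent in view of Corollary~\ref{l:05}.
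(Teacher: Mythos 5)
Your proposal is correct and is exactly the route the paper intends: the text states that Lemma~\ref{l:08} is equivalent to Lemma~\ref{l:06} in view of Corollary~\ref{l:05}, and your argument simply writes out that equivalence (transpose, apply the row-stacking Lemma~\ref{l:06}, transpose back), with the dimension bookkeeping checked correctly.
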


In order to prove Lemma~\ref{l:06} we will use the following
auxiliary statement.

\begin{lemma}\label{l:07}
Let $(F,\mu)$ be weakly non-planar, $r\le n$, $A\in  \Mat{r}{m}$, $B\in  \Mat{r}{n}$ and let $\rank(A|B)=r$. Then for any ball $V\subset U$ centred in $\supp\,\mu$ there is $x\in V\cap\supp\,\mu$ such that \/ $\rank(AF(x)+B)=r$.
\end{lemma}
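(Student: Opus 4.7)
The plan is to reduce the general case $r\le n$ to the defining case $r=n$ of weak non-planarity by enlarging $(A|B)$ to a full-rank $n\times(m+n)$ matrix. First I would select blocks $A''\in\Mat{n-r}{m}$ and $B''\in\Mat{n-r}{n}$ such that the augmented matrix
$$
(A'|B')=\begin{pmatrix} A & B \\ A'' & B'' \end{pmatrix}
$$
satisfies $\rank(A'|B')=n$. Such a choice always exists: the rows of $(A|B)$ span an $r$-dimensional subspace of $\R^{m+n}$, and one can complete them to $n$ linearly independent vectors in $\R^{m+n}$; splitting each added vector into its first $m$ and last $n$ coordinates provides the rows of $A''$ and $B''$ respectively.

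Next, since $(F,\mu)$ is weakly non-planar and $\rank(A'|B')=n$, applying \equ{nonplgeneral} to $\cH_{A',B'}$ yields, for the prescribed ball $V\subset U$ centred in $\supp\,\mu$, a point $x\in V\cap\supp\,\mu$ such that $F(x)\notin\cH_{A',B'}$, i.e., $\det(A'F(x)+B')\ne 0$. Equivalently, the $n\times n$ matrix $A'F(x)+B'$ has rank $n$.

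The argument then concludes via the block identity
$$
A'F(x)+B'=\begin{pmatrix} AF(x)+B \\ A''F(x)+B'' \end{pmatrix}.
$$
An $n\times n$ matrix of full rank has $n$ linearly independent rows, so in particular the top $r$ rows of $A'F(x)+B'$ are linearly independent, giving $\rank(AF(x)+B)\ge r$. Since $AF(x)+B$ is an $r\times n$ matrix with $r\le n$, the reverse inequality is automatic, and we conclude $\rank(AF(x)+B)=r$, as required.

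I do not anticipate any serious obstacle. The proof is a straightforward linear-algebra reduction to the $r=n$ case built into the definition of weak non-planarity; the only fact used beyond that definition is the elementary observation that a rank-$r$ matrix with $m+n$ columns can always be completed, by adding $n-r$ further rows, to a matrix of rank $n$. The block-triangular identity then transfers the nonvanishing of the determinant back to the maximal-rank assertion for $AF(x)+B$.
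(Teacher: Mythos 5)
Your proof is correct and follows essentially the same route as the paper: complete $(A|B)$ to an $n\times(m+n)$ matrix of rank $n$, invoke weak non-planarity for the augmented pair to get $\det(A'F(x)+B')\ne 0$ at some $x\in V\cap\supp\,\mu$, and read off the rank of the top $r$ rows from the block identity. No gaps.
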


\begin{proof}
Let $V\subset U$ be a ball centred in $\supp\,\mu$. Since $\rank(A|B)=r$, there are matrices $\tilde A\in
\Mat{n-r}{m}$ and $\tilde B\in \Mat{n-r}{n}$ such that
\begin{equation}\label{e:041}
\rank\left(\begin{array}{cc}A&B\\\tilde A&\tilde B\end{array}\right)=n.
\end{equation}
Let
$$
A^*=\left(\begin{array}{c}A\\\tilde A\end{array}\right)\in
\Mat{n}{m}\qquad\text{and}\qquad B^*=\left(\begin{array}{c}B\\\tilde B\end{array}\right)\in \Mat{n}{n}.
$$
Then, by (\ref{e:041}) and the weak non-planarity of $(F,\mu)$, there is a $x\in V\cap\supp\,\mu$ such that $\det(A^*F(x)+B^*)\not=0$. Therefore, $\rank(A^*F(x)+B^*)=n$. Clearly
$$
A^*F(x)+B^*=\left(\begin{array}{c}AF(x)+B\\[1ex]
\tilde AF(x)+\tilde B
\end{array}\right).
$$
Then, the fact that the rank of this matrix is
$n$ implies that $\rank(AF(x)+B)=r$.
\end{proof}

\bigskip

\noindent\emph{Proof of Lemma~\ref{l:06}.} For $i=1,2$ let $V_i\subset U_i$ be a ball centred in $\supp\,\mu_i$. The ball $V=V_1\times V_2\subset U$ is then centred in $\supp\,\mu$. Let $A\in \Mat{n}{m}$, $B\in \Mat{n}{n}$ and $\rank(A|B)=n$. Our goal is to show that there is a point $(x_1,x_2)\in V\cap\supp\,\mu$ such that $\det(AF(x_1,x_2)+B)\not=0$.

Split $A$ into $A_1\in \Mat{n}{m_1}$ and $A_2\in \Mat{n}{m_2}$ so that $A=(A_1|A_2)$. By (\ref{prod}), we have that $AF(x_1,x_2)+B=A_1F_1(x_1)+A_2F_2(x_2)+B$. Assume for the moment that we have shown that
\begin{equation}\label{e:042}
\exists\ x_2\in V_2\cap\supp\,\mu_2\quad\text{such that}\quad\rank(A_1|A_2F_2(x_2)+B)=n.
\end{equation}
Then, since $(F_1,\mu_1)$ is weakly non-planar, there would be an
$x_1\in V_1\cap\supp\,\mu_1$ such that $\det\big(A_1F_1(x_1)+(A_2F_2(x_2)+B)\big)\not=0$ and the proof would be complete. Thus, it remains to show (\ref{e:042}).

\medskip

Let $r=\rank(A_2|B)$. Using the Gauss method eliminate the last
$n-r$ rows from $(A_2|B)$. This means that without loss of generality we can assume that $(A_1|A_2|B)$ is of the following form
$$
(A_1|A_2|B)=\left(\begin{array}{c|c|c} * & A_2' & C \\ \hline A_1'
& 0 & 0\end{array}\right),
$$
where $A_1'\in \Mat{n-r}{m_1}$, $A_2'\in \Mat{r}{m_2}$ and $C\in
\Mat{r}{n}$. Observe that $\rank(A_2'|C)=r$. Since $\rank(A|B)=n$, we necessarily have that $\rank A_1'=n-r$. Now verify that
\begin{equation}\label{e:043}
(A_1|A_2F_2(x_2)+B)=\left(\begin{array}{c|c} * & A_2'F_2(x_2) + C \\ \hline
A_1' & 0 \end{array}\right)
\end{equation}
Since $\rank(A_2'|C)=r$ and $(F_2,\mu_2)$ is weakly non-planar, by Lemma~\ref{l:07}, there is an $x_2\in V_2\cap\supp\,\mu_2$ such that
$\rank(A_2'F_2(x_2) + C)=r$. This together with the fact that $\rank
A_1'=n-r$ immediately implies that matrix (\ref{e:043}) is of rank $n$. Thus (\ref{e:042}) is established and the proof is complete.
\proofend

\

Using Lemmas~\ref{l:06} alongside \cite[Lemma~2.2]{Kleinbock-Tomanov-07:MR2314053} and \cite[Theorem~2.4]{Kleinbock-Lindenstrauss-Weiss-04:MR2134453} one relatively straightforwardly obtains the following generalisations of Theorem~6.3 from \cite{Kleinbock-Margulis-Wang-09}.

\begin{theorem}\label{t4}
For $i=1,\dots,l$ let an open subset $U_i$ of $\R^{d_i}$, a continuous map $F_i:U_i\to \Mat{m_i}{n}$ and a Federer measure $\mu_i$ on $U_i$\/ be given. Assume that for every $i$ the pair $(F_i,\mu_i)$ is good and weakly non-planar. Let $\mu=\mu_1\times\dots\times \mu_l$ be the product measure on $U=U_1\times\dots\times U_l$, $m=m_1+\dots+m_l$ and let $F:U\to\Mat{m}{n}$ be given by
\begin{equation}\label{prod2}
F(x_1,\dots,x_l)\df \left(\begin{array}{c} F_1(x_1)\\ \vdots \\ F_l(x_l)\end{array}\right).
\end{equation}
Then ~{\rm(a)} $\mu$ is Federer, ~{\rm(b)} $(F,\mu)$ is good and ~{\rm(c)} $(F,\mu)$ is weakly non-planar.
\end{theorem}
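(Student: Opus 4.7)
The plan is to verify the three conclusions (a), (b), (c) separately, using the two cited results together with Lemma~\ref{l:06} already established.

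For (a), the product of finitely many Federer measures on Euclidean spaces is Federer, which is exactly the content of \cite[Lemma~2.2]{Kleinbock-Tomanov-07:MR2314053} (applied inductively in $l$). Part (c) follows by induction on $l$. The base case $l=1$ is trivial. For the inductive step, let $F'=\bigl(F_1^t,\dots,F_{l-1}^t\bigr)^t$ and $\mu'=\mu_1\times\dots\times\mu_{l-1}$; by the inductive hypothesis $(F',\mu')$ is weakly non-planar. Then $F$ fits into the shape of \equ{prod} with the two vertical blocks being $F'$ and $F_l$, so Lemma~\ref{l:06} applies and yields weak non-planarity of $(F,\mu)$.

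For (b), the strategy is to express every $\R$-linear combination of $1$ and minors of $F(x_1,\dots,x_l)$ as a linear combination of products $\phi_1(x_1)\cdots\phi_l(x_l)$, where each $\phi_i$ is itself an $\R$-linear combination of $1$ and minors of $F_i(x_i)$. Indeed, any minor of $F$ is determined by a $k$-element subset of rows and a $k$-element subset of columns; the row selection is partitioned uniquely as $(I_1,\dots,I_l)$ with $I_i\subset\{1,\dots,m_i\}$ of some size $k_i$, and then repeated Laplace expansion along these blocks of rows expresses the minor as a signed sum, over column partitions $J=J_1\sqcup\dots\sqcup J_l$ with $|J_i|=k_i$, of the products $\prod_{i=1}^l \det F_i^{(I_i,J_i)}$. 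Goodness of each $(F_i,\mu_i)$ means that for $\mu_i$-a.e.\ point there is a neighbourhood on which every such $\phi_i$ is $(C_i,\alpha_i)$-good with respect to $\mu_i$. Applying \cite[Theorem~2.4]{Kleinbock-Lindenstrauss-Weiss-04:MR2134453}, which says that an $\R$-linear combination of products of functions that are $(C_i,\alpha_i)$-good in their respective variables is $(C,\alpha)$-good with respect to the product measure for suitable constants $C,\alpha$, one then concludes that $(F,\mu)$ is good on a product neighbourhood of $\mu$-a.e.\ point.

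The main obstacle, in my view, is the bookkeeping in part (b): one needs to confirm that the generic element of the space of $\R$-linear combinations of $1$ and minors of $F$ lies in the tensor-type span appearing in \cite[Theorem~2.4]{Kleinbock-Lindenstrauss-Weiss-04:MR2134453}, so that the latter applies uniformly in a product neighbourhood. Parts (a) and (c) are essentially mechanical once Lemma~\ref{l:06} and \cite[Lemma~2.2]{Kleinbock-Tomanov-07:MR2314053} are in hand.
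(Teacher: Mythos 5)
Your overall route is exactly the one the paper intends: the authors give no detailed proof of Theorem~\ref{t4}, saying only that it follows ``relatively straightforwardly'' from Lemma~\ref{l:06}, \cite[Lemma~2.2]{Kleinbock-Tomanov-07:MR2314053} and \cite[Theorem~2.4]{Kleinbock-Lindenstrauss-Weiss-04:MR2134453}, and your parts (a) and (c) (Federer-ness of the product via the Kleinbock--Tomanov lemma, and induction on $l$ feeding two-block stacks into Lemma~\ref{l:06}) are precisely that.

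The one point to tighten is your justification of (b). As you state it, you invoke \cite[Theorem~2.4]{Kleinbock-Lindenstrauss-Weiss-04:MR2134453} as saying that an $\R$-linear combination of products of $(C_i,\alpha_i)$-good functions is good with respect to the product measure. That is not a correct general principle: $(C,\alpha)$-goodness is not preserved under sums (which is exactly why the definition of a good pair quantifies over \emph{all} linear combinations of $1,f_1,\dots,f_N$ rather than just the coordinate functions), and the cited theorem is instead a statement about a single function on a product space which is good in each variable separately, uniformly over the frozen variables. Fortunately your Laplace-expansion bookkeeping already supplies the fix: fix all variables except $x_i$ in an arbitrary linear combination of $1$ and the minors of $F(x_1,\dots,x_l)$; by your block expansion this restriction is again a linear combination of $1$ and the minors of $F_i(x_i)$ (with coefficients depending on the frozen variables), and such a combination is $(C_i,\alpha_i)$-good on the neighbourhood $V_i$ with respect to $\mu_i$ precisely because goodness of $(F_i,\mu_i)$ covers every such combination with the same constants. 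With this rephrasing, the product-goodness theorem applies on $V_1\times\dots\times V_l$, such products cover $\mu$-almost every point of $U$, and part (b) is complete; the rest of your argument stands as written.
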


A similar analogue can be deduced from Lemma~\ref{l:08} for the transpose of (\ref{prod2}).

\section{Inhomogeneous and weighted extremality}\label{inhomsec}

\subsection{Inhomogeneous approximation}

In the inhomogeneous case, instead of the systems of linear forms $\vv q\mapsto Y\vv q$ given by $Y\in\Mat{m}{n}$, one considers systems of affine forms $\vv q\mapsto Y\vv q+\vv z$ given by the pairs $(Y;\vv z)$, where $Y\in\Mat{m}{n}$ and $\vv z\in\R^m$. The homogeneous case corresponds to $(Y;\vv z)=(Y; \vv0)$. Let us say that $(Y; \vv z)$ is \emph{VWA} (\emph{very well approximable}) if there exists $\ve>0$ such that for arbitrarily large $Q>1$ there are $\vv
q\in\Z^n\bnz$ and $\vv p\in\Z^m$ satisfying
\begin{equation}\label{e:002inh}
 \|Y\vv q+\vv z-\vv p\|^m<Q^{-1-\ve}\qqand\|\vv q\|^n\le Q\,.
\end{equation}
Let us say that $(Y; \vv z)$ is \emph{VWMA} (\emph{very well multiplicatively approximable}) if there exists $\ve>0$ such that for arbitrarily large $Q>1$ there are $\vv
q\in\Z^n\bnz$ and $\vv p\in\Z^m$ satisfying
\begin{equation}\label{e:004inh}
 \Pi(Y\vv q+\vv z-\vv p)<Q^{-1-\ve}\qqand\PI_+(\vv q)\le Q\,.
\end{equation}
The above definitions are consistent with those used in other papers (see, e.g., \cite{Beresnevich-Velani-10:MR2734962, Bugeaud-Mult}). It is easy to see that in the homogeneous case ($\vv z=\vv0$) these definitions are equivalent to those given in \S\ref{intro}. Note that, in general, $(Y; \vv z)$ is VWA if either $Y\vv q+\vv z\in\Z^m$ for some $\vv q\in\Z^n\bnz$, or there is $\ve>0$ such that the inequality
\begin{equation}\label{e:002inhX}
\|Y\vv q+\vv z-\vv p\|^m<\|\vv q\|^{-(1+\ve)n}
\end{equation}
holds for infinitely many $\vv q\in\Z^n$ and $\vv p\in\Z^m$.
Similarly, $(Y; \vv z)$ is VWMA if either $Y\vv q+\vv z$ has an integer coordinate for some $\vv q\in\Z^n\bnz$, or there is $\ve>0$ such that the inequality
\begin{equation}\label{e:004inhX}
\Pi(Y\vv q+\vv z-\vv p)<\Pi_+(\vv q)^{-1-\ve}
\end{equation}
holds for infinitely many $\vv q\in\Z^n$ and $\vv p\in\Z^m$.

\medskip

One says that a measure $\mu$ on $\Mat{m}{n}$ is \emph{inhomogeneously extremal}  (resp., \emph{inhomogeneously strongly extremal}) if for every $\vv z\in\R^m$ the pair $(Y; \vv z)$ is VWA (resp., VWMA) for $\mu$-almost all $Y\in\Mat{m}{n}$. This property holds e.g.\ for Lebesgue measure on $\Mat{m}{n}$ as an easy consequence of the Borel-Canteli Lemma -- see also \cite{Schmidt-1964} for a far more general result. Clearly, any inhomogeneously (strongly) extremal measure $\mu$ is (strongly) extremal. However, the converse is not generally true. For example, Remark~2 in \cite[p.\,826]{Beresnevich-Velani-10:MR2734962} contains examples of lines in $\Mat{2}{1}$ that are strongly extremal but \textsc{not} inhomogeneously strongly extremal. More to the point, {\em no atomic measure can be inhomogeneously extremal}. This readily follows from the fact that for any extremal $Y$ and $v>1$ the set
$$
\cW_Y(v):=\left\{\vv z\in[0,1)^m:\begin{array}{l}
\|Y\vv q+\vv z-\vv p\|^m<\|\vv q\|^{-vn}\text{ holds for}\\[0.5ex]
\text{infinitely many $\vv q\in\Z^n\bnz$ and $\vv p\in\Z^m$}
\end{array}\right\}
$$
is non-empty, and in fact has Hausdorff dimension
$$
\dim\cW_Y(v)=\frac{m}{v}.
$$
The proof of this fact is analogous to that of Theorem~6 from \cite{BugChev} and will not be considered here. The extremality of $Y$ is not necessary to ensure that $\cW_Y(v)\neq\emptyset$. For example, using the effective version of Kronecker's theorem \cite[Theorem~VI,~p.\,82]{Cassels-1957} and the Mass Transference Principle of \cite{Beresnevich-Velani-06:MR2259250} one can easily show the following: \emph{if for some $\ve>0$ inequality \eqref{e:002} has only finitely many solutions $\vv q\in\Z^n$ and $\vv p\in\Z^m$, then $\dim \cW_Y(v)>0$} for any $v>1$.

\bigskip

The main goal of this section is to prove an inhomogeneous generalisation of Theorem~\ref{t2} (see Corollary~\ref{inhom_cor} below). This is based on establishing an inhomogeneous transference akin to Theorem~1 in \cite{Beresnevich-Velani-10:MR2734962}. In short, the transference enables us to deduce the inhomogeneous (strong) extremality of a measure once we know it is (strongly) extremal. As we have discussed above, such a transference is impossible for arbitrary measures and would require some conditions on the measures under consideration. In \cite{Beresnevich-Velani-10:MR2734962}, the notion of contracting measures on $\Mat{m}{n}$ has been introduced and used to establish such a transference. Our following result makes use of the notion of good and non-planar rows which is much easier to verify, thus simplifying and in a sense generalising the result of \cite{Beresnevich-Velani-10:MR2734962}.

\begin{theorem}\label{t2inhom}
Let $U$ be an open subset of\/ $\R^d$, $\mu$ a Federer measure on $U$ and $F:U\to\Mat{m}{n}$ a continuous map. Let $F_j:U\to\R^n$ denote the $j$-th row of $F$.
Assume that the pair $(F_j,\mu)$ is good and non-planar for each $j$. Then we have the following two equivalences
\begin{equation}\label{vbA}
\text{$F_*\mu$ is extremal} \iff \text{$F_*\mu$ is inhomogeneously extremal}\,,~~~~~
\end{equation}
\begin{equation}\label{vbB}
\text{$F_*\mu$ is strongly extremal} \iff \text{$F_*\mu$ is inhomogeneously strongly extremal.}
\end{equation}
\end{theorem}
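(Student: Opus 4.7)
The implications $\Leftarrow$ in \eqref{vbA} and \eqref{vbB} are immediate: they follow by specializing the definition of inhomogeneous (strong) extremality to $\vv z = \vv 0$. The substantive content is the forward direction, and my plan is to prove an inhomogeneous transference principle in the spirit of Theorem~1 of \cite{Beresnevich-Velani-10:MR2734962}, but with the ``contracting measure'' hypothesis used there replaced by the row-wise good-and-non-planar condition available here. The mechanism of the transference is to treat the affine term $\vv z$ as a perturbation: for each candidate pair $(\vv q,\vv p)$ witnessing an inhomogeneous approximation \eqref{e:002inh} or \eqref{e:004inh}, one finds an auxiliary integer vector producing a purely homogeneous approximation of slightly worse quality. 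Running this argument for $\mu$-almost every $x$ and then invoking the given homogeneous (strong) extremality of $F_*\mu$ forces the inhomogeneous bad set for any fixed $\vv z$ to have $\mu$-measure zero.

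The steps I would carry out are as follows. Step~1: use the forthcoming Lemma~\ref{lemV} to reformulate inhomogeneous VWMA in a form that separates the role of $\vv z$ from that of $F(x)\vv q - \vv p$, making the condition amenable to the lattice-dynamics framework of \S\ref{proof}. Step~2: for each row index $j$, exploit the hypothesis that $(F_j,\mu)$ is good and non-planar in order to control, via the $(C,\alpha)$-good inequality, the $\mu$-measure of sets of the form
$$
\bigl\{\,x\in V : |F_j(x)\vv q + z_j - p_j| < \eta_j\,\bigr\}
$$
uniformly in $\vv q$, $p_j$ and $z_j$. Since each row is itself good and non-planar, Theorem~\ref{klw} applies row by row, supplying the quantitative nondivergence needed at this step. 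Step~3: assemble the row-wise estimates to convert each inhomogeneous approximation into a homogeneous one, thereby reducing the inhomogeneous bad set to a set already known to be $\mu$-null under the given (strong) extremality of $F_*\mu$. Step~4: conclude by a standard Borel--Cantelli argument, observing that the constants in Steps~2--3 are uniform in $\vv z$.

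The main obstacle is Step~3 in the multiplicative case \eqref{vbB}. There the smallness in $\Pi\bigl(F(x)\vv q + \vv z - \vv p\bigr)$ can be distributed very unevenly across the $m$ rows, so the row-wise estimates of Step~2 must be summed over a dyadic partition of all admissible distribution profiles $(\eta_1,\ldots,\eta_m)$ with $\prod_j \eta_j \le \Pi_+(\vv q)^{-1-\ve}$. The non-planarity of each row is essential here: it rules out the degenerate scenarios in which $F_j(x)\vv q - p_j$ is trapped in a lower-dimensional affine set (which would break the row-wise $(C,\alpha)$-good bound when combined with the non-trivial shift $z_j$), and is precisely the property that replaces the contracting hypothesis of \cite{Beresnevich-Velani-10:MR2734962}. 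Once the dyadic sum is controlled via these uniform per-row estimates, the Borel--Cantelli conclusion follows from exactly the same convergence that underlies the homogeneous strong extremality of $F_*\mu$, which completes both \eqref{vbA} and \eqref{vbB}.
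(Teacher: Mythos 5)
Your outer strategy coincides with the paper's: the deduction of the multiplicative equivalence \eqref{vbB} from a weighted statement via Lemma~\ref{lemV}, and a transference in the spirit of \cite{Beresnevich-Velani-10:MR2734962} in which the ``contracting'' hypothesis is replaced by the row-wise good-and-non-planar assumption. However, the mechanism you describe in Steps~3--4 contains a genuine gap. There is no pointwise conversion of a single inhomogeneous solution $(\vv q,\vv p)$ of \eqref{e:002inh} or \eqref{e:004inh} for $(F(x);\vv z)$ into a homogeneous solution of slightly worse quality, and no Borel--Cantelli argument ``uniform in $\vv z$'' can replace it. If such a conversion existed, every extremal $Y$ would automatically be inhomogeneously extremal, which is false: the paper points out that no atomic measure is inhomogeneously extremal (the sets $\cW_Y(v)$ are nonempty, of positive Hausdorff dimension, for every extremal $Y$), and that there are strongly extremal lines in $\Mat{2}{1}$ which are not inhomogeneously strongly extremal. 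Moreover, for a general Federer measure with $(C,\alpha)$-good rows the measure estimates for sets of the form $\{x:|F_j(x)\vv q+z_j-p_j|<\eta\}$ are far too weak to be summed over all pairs $(\vv q,\vv p)$, which is precisely why even the homogeneous Theorem~\ref{t2} rests on quantitative nondivergence rather than on Borel--Cantelli; so your concluding step would fail as well.

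What actually eliminates $\vv z$ is a measure-theoretic, not pointwise, argument. The paper proves the weighted Theorem~\ref{t2inhom2} by applying the transference Theorem~\ref{ITP} (Theorem~5 of \cite{Beresnevich-Velani-10:MR2734962}) to the sets \eqref{I} and \eqref{H}. The shift $\vv z$ is cancelled only through the \emph{intersection property}: if $x\in\rI_t(\alpha,\psi)\cap\rI_t(\alpha',\psi)$ for two \emph{distinct} indices $\alpha=(\vv q,\vv p)$, $\alpha'=(\vv q',\vv p')$ at the same scale $t$, then subtracting the two systems of inequalities removes $\vv z$ and places $x$ in a homogeneous set $\rH_t(\alpha'',\psi)$ with $\vv q''=\vv q-\vv q'\ne\vv 0$; one inhomogeneous solution alone yields nothing. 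The row-wise hypotheses enter through the \emph{contraction property}: goodness of $(F_j,\mu)$ gives the $(C,\alpha)$-good bound for $F_j(\cdot)\vv q+z_j-p_j$ on suitable balls, while non-planarity of $(F_j,\mu)$ supplies the uniform lower bound $\|F_j(\cdot)\vv q+p\|_{\mu,U}\ge r_0\|\vv q\|$, and together these produce $\mu\big(5B\cap\rI_t(\alpha,\psi)\big)\le k_t\,\mu(5B)$ with $\sum_t k_t<\infty$. Feeding these two properties into Theorem~\ref{ITP} transfers the null-set statement from the homogeneous limsup sets to the inhomogeneous ones, for each fixed $\vv z$. Your Step~2 is close to what the contraction property requires (though Theorem~\ref{klw} is not the relevant tool there), but without the intersection step the transference cannot be carried out, and the dyadic bookkeeping you propose for the multiplicative case is made unnecessary by the reduction to fixed weights $\vv r$ via Lemma~\ref{lemV}.
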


\noindent Observe that $(F_j,\mu)$ is good and non-planar for each $j$ whenever $(F,\mu)$ is good and weakly non-planar. Hence, Theorems~\ref{t2} and \ref{t2inhom} imply the following

\begin{corollary}\label{inhom_cor}
Let $U$ be an open subset of $\R^d$, $\mu$ a Federer measure on $U$ and $F:U\to\Mat{m}{n}$ a continuous map such that
$(F,\mu)$ is {\rm(i)} good, and {\rm(ii)} weakly  non-planar. Then
$F_*\mu$ is inhomogeneously strongly extremal.
\end{corollary}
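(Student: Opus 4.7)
The plan is a two-step assembly of Theorem~\ref{t2} and the inhomogeneous transference Theorem~\ref{t2inhom}. First, the hypotheses -- that $(F,\mu)$ is good and weakly non-planar -- are exactly those of Theorem~\ref{t2}, which yields strong extremality of $F_*\mu$. To upgrade this to inhomogeneous strong extremality via the equivalence \eqref{vbB} of Theorem~\ref{t2inhom}, I need only verify that each row pair $(F_j,\mu)$ is good and non-planar.

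For goodness of $(F_j,\mu)$: the entries of $F_j$ appear among the first-order minors of $F$, hence among the coordinates of $\dd\circ F$. The hypothesis that $(F,\mu)=(\dd\circ F,\mu)$ is good therefore immediately yields that any linear combination of $1, F_{j,1},\dots, F_{j,n}$ is $(C,\alpha)$-good, so $(F_j,\mu)$ is good. For non-planarity of $(F_j,\mu)$: I appeal to Lemma~\ref{l:04} with $\tilde m=1$, $\tilde n=n$, $L=e_j^t\in \Mat{1}{m}$ (the row picking out the $j$-th row of $F$) and $R=I_n$; the rank conditions are trivially satisfied, so $\tilde F=LFR=F_j$ is weakly non-planar. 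Since here $\tilde m=1$, Lemma~\ref{l:02} tells us that weak non-planarity coincides with the ordinary non-planarity condition \equ{nonpl} in the row case, which is precisely what the hypothesis of Theorem~\ref{t2inhom} requires.

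With both hypotheses of Theorem~\ref{t2inhom} met, the equivalence \eqref{vbB} converts the already-established strong extremality of $F_*\mu$ into inhomogeneous strong extremality, completing the proof. There is no substantive obstacle here: the entire content is packaged inside Theorems~\ref{t2} and \ref{t2inhom}, and the only thing to do beyond invoking them is the elementary verification, already flagged in the paper just before the statement, that goodness and weak non-planarity of $(F,\mu)$ descend to goodness and (row-matrix) non-planarity of each $(F_j,\mu)$.
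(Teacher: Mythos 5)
Correct, and essentially the paper's own (one-line) argument: the paper likewise just observes that goodness and weak non-planarity of $(F,\mu)$ yield goodness and non-planarity of each row pair $(F_j,\mu)$ and then combines Theorem~\ref{t2} with the equivalence \eqref{vbB} of Theorem~\ref{t2inhom}, and your filling-in of that observation (goodness because the entries of $F_j$ occur among the coordinates of $\dd\circ F$, weak non-planarity of rows via Lemma~\ref{l:04}) is exactly what is intended. The only slight imprecision is the appeal to Lemma~\ref{l:02} for identifying weak non-planarity with \equ{nonpl} in the row case: that lemma gives only that each $\cH_{A,B}$ is a hyperplane or empty (so \equ{nonpl} implies weak non-planarity), whereas the direction you need --- that containment of $F_j(V\cap\supp\,\mu)$ in an affine hyperplane violates weak non-planarity --- rests on the easy converse that every affine hyperplane of $\Mat{1}{n}$ equals some $\cH_{A,B}$ with $\rank(A|B)=n$ (clear from (\ref{e:007}), since the coefficients there are the signed maximal minors of $(A|B)$ and can be prescribed arbitrarily), an equivalence the paper itself asserts in \S\ref{main_results}.
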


\subsection{Weighted approximation}

Weighted extremality is a modification of the standard (non-multiplicative) case obtained by introducing weights of approximation for each linear form. Formally, let $\vv r=(r_1,\dots,r_{m+n})$ be an $(m+n)$-tuple of real numbers such that
\begin{equation}\label{vb+4}
r_i\ge0\quad(1\le i\le m+n)\qqand r_1+\ldots+r_m=r_{m+1}+\ldots+r_{m+n}=1.
\end{equation}
One says that $(Y; \vv z)$ is $\vv r$-\emph{VWA} ($\vv r$-\emph{very well approximable}) if there exists $\ve>0$ such that for arbitrarily large $Q>1$ there are $\vv q\in\Z^n\bnz$ and $\vv p\in\Z^m$ satisfying
\begin{equation}\label{vb+5}
 |Y_j\vv q+z_j-p_j|<Q^{-(1+\ve)r_j}\quad(1\le j\le m)\qqand|q_i|< Q^{r_{m+i}}\quad(1\le i\le n)\,,
\end{equation}
where $Y_j$ is the $j$-th row of $Y$. A measure $\mu$ on $\Mat{m}{n}$ will be called \emph{$\vv r$-extremal} if $(Y; \vv 0)$ is $\vv r$-VWA for $\mu$-almost all $Y\in\Mat{m}{n}$; a measure $\mu$ on $\Mat{m}{n}$ will be called \emph{inhomogeneously $\vv r$-extremal} if for every $\vv z\in\R^m$ the pair $(Y; \vv z)$ is $\vv r$-VWA for $\mu$-almost all $Y\in\Mat{m}{n}$.

It is readily seen that $(Y;\vv z)$ is VWA if and only if it is $(\tfrac1m,\dots,\tfrac1m,\tfrac1n,\dots,\tfrac1n)$-VWA. Thus, (inhomogeneous) extremality is a special case of (inhomogeneous) $\vv r$-extremality. In fact, the strong extremality is also encompassed by $\vv r$-extremality as follows from the following

\begin{lemma}\label{lemV}
$(Y;\vv z)$ is VWMA $\iff$ $(Y;\vv z)$ is $\vv r$-VWA for some $\vv r$ satisfying \eqref{vb+4}.\\ Furthermore, each VWMA pair $(Y;\vv z)$ is $\vv r$-VWA for some $\vv r\in\Q^{m+n}$.
\end{lemma}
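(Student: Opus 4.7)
The plan is to prove the two implications separately. The $\Leftarrow$ direction is straightforward: given $\vv r$ satisfying \eqref{vb+4} and $\epsilon>0$ witnessing $\vv r$-VWA, multiplying the first $m$ inequalities of \eqref{vb+5} yields $\Pi(Y\vv q+\vv z-\vv p)<Q^{-(1+\epsilon)\sum_j r_j}=Q^{-1-\epsilon}$, and $\max(1,|q_i|)\le Q^{r_{m+i}}$ (valid since $r_{m+i}\ge 0$ and $Q\ge 1$) gives $\Pi_+(\vv q)\le Q$, so \eqref{e:004inh} holds. Note that no rationality of $\vv r$ is needed for this direction.

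For the converse, assume $(Y;\vv z)$ is VWMA. First I would replace each candidate $\vv p$ by the nearest integer vector to $Y\vv q+\vv z$, extracting an infinite sequence of pairs $(\vv q^{(k)},\vv p^{(k)})$ with $Q_k:=\Pi_+(\vv q^{(k)})\to\infty$, $a_j^{(k)}:=|Y_j\vv q^{(k)}+z_j-p_j^{(k)}|\in(0,1/2]$, and $\prod_j a_j^{(k)}<Q_k^{-1-\epsilon}$ for some $\epsilon>0$. (Indices $j$ for which $a_j^{(k)}=0$ infinitely often force a rational affine relation on the $j$-th row; one sets $r_j=0$ and iterates on the remaining rows.) Introducing the normalised log-coordinates
\[\lambda_j^{(k)}:=-\log a_j^{(k)}/\log Q_k>0\qand\sigma_i^{(k)}:=\log\max(1,|q_i^{(k)}|)/\log Q_k\ge 0,\]
I obtain $\sum_i\sigma_i^{(k)}=1$ and $\sum_j\lambda_j^{(k)}>1+\epsilon$; moreover, the $\vv r$-VWA inequalities \eqref{vb+5} at $(\vv q^{(k)},\vv p^{(k)})$ with $Q=Q_k^c$ translate (for $r_j,r_{m+i}>0$) into the scalar inequality
\[\max_i\;\sigma_i^{(k)}/r_{m+i}\;<\;c\;<\;\min_j\;\lambda_j^{(k)}/\bigl((1+\epsilon')r_j\bigr).\]

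Fix $\epsilon'\in(0,\epsilon)$. The key idea is a pigeonhole argument over rational $\vv r$. For each $k$ I define the tailored vector $\hat{\vv r}^{(k)}\in\Delta_m\times\Delta_n$ (the compact product of simplices of tuples obeying \eqref{vb+4}) by $\hat r_j^{(k)}:=\lambda_j^{(k)}/\sum_l\lambda_l^{(k)}$ and $\hat r_{m+i}^{(k)}:=\sigma_i^{(k)}$. Substituting gives $\sigma_i^{(k)}/\hat r_{m+i}^{(k)}=1$ and $\lambda_j^{(k)}/((1+\epsilon')\hat r_j^{(k)})=\sum_l\lambda_l^{(k)}/(1+\epsilon')>(1+\epsilon)/(1+\epsilon')>1$, so the admissible interval for $c$ is open and non-empty at $\vv r=\hat{\vv r}^{(k)}$. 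By continuity, the same holds for any $\vv r$ in a sufficiently small neighborhood of $\hat{\vv r}^{(k)}$, so I can pick a rational $\vv r^{(k)}\in\Q^{m+n}\cap(\Delta_m\times\Delta_n)$ inside that neighborhood. Applying pigeonhole to $\{\vv r^{(k)}\}_k$, a sequence in a countable set, singles out a rational $\vv r$ attained by $\vv r^{(k)}$ for infinitely many $k$; this $\vv r$ then witnesses $\vv r$-VWA of $(Y;\vv z)$ with exponent $\epsilon'$.

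The main obstacle will be the boundary case where $\sigma_i^{(k)}=0$ (i.e.\ $|q_i^{(k)}|\le 1$), which forces $\hat r_{m+i}^{(k)}=0$: here a naive continuity argument in a full neighborhood of $\hat{\vv r}^{(k)}$ breaks down, since the $\vv r$-VWA requirement at $|q_i^{(k)}|=1$ needs $r_{m+i}>0$, and the ratio $\sigma_i^{(k)}/r_{m+i}$ is discontinuous under rational perturbations that change the zero-pattern of $\vv r$. I would resolve this by stratifying $\Delta_m\times\Delta_n$ according to the zero-pattern of $\vv r$, applying pigeonhole first to fix a stratum containing $\hat{\vv r}^{(k)}$ for infinitely many $k$, and then performing the rational approximation within that stratum (introducing a small positive value at zero-indices, decreasing another coordinate slightly to keep $\sum=1$, and enlarging $c$ beyond $1$ to absorb the resulting error). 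Since only finitely many strata occur, the final pigeonhole still yields a single rational $\vv r$.
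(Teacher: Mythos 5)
Your $\Leftarrow$ direction is fine and matches the paper, and your log-coordinate reformulation of the $\Rightarrow$ direction is in the same spirit as the paper's argument (the paper also normalises exponents and pigeonholes over weight vectors). However, your pigeonhole step has a genuine gap. You choose, for each $k$, a rational $\vv r^{(k)}$ in a ``sufficiently small neighborhood'' of the tailored vector $\hat{\vv r}^{(k)}$ and then assert that pigeonhole applied to ``a sequence in a countable set'' produces a single $\vv r$ attained infinitely often. Pigeonhole requires a \emph{finite} set of candidates: a sequence in the countably infinite set of rational points of the simplex need not repeat any value, and nothing in your construction forces the $\vv r^{(k)}$ to lie in a fixed finite collection. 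Worse, the admissible neighborhood genuinely depends on $k$: the left endpoint $\max_i\sigma_i^{(k)}/r_{m+i}$ is extremely sensitive to perturbing $r_{m+i}$ downward when $\sigma_i^{(k)}$ is small, so you cannot simply round to a fixed grid ``in any direction.'' The paper resolves exactly this by (a) discretising the exponents to integer multiples of a fixed $\delta$ with $1/\delta\in\Z$, so that only finitely many candidate weight vectors exist, (b) rounding one-sidedly (rows down, $r_j\le u_j$; columns up, $r_{m+i}\ge u_{m+i}$), which can only improve the inequalities, and (c) replacing $\Pi_+(\vv q)$ by $Q=\Pi_+(\vv q)^{1+\delta'}$ to create the slack that makes the sum constraints $\sum_j r_j=\sum_i r_{m+i}=1$ attainable on the grid. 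Your stratification-by-zero-pattern remark fixes a different (boundary) issue but does not restore finiteness, since within a stratum you are still drawing from infinitely many rationals.

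A second, smaller error is your treatment of the degenerate case where $Y_j\vv q^{(k)}+z_j\in\Z$ infinitely often: you propose to set $r_j=0$ for such a row and ``iterate on the remaining rows.'' This cannot work, because once one factor of $\Pi(Y\vv q+\vv z-\vv p)$ vanishes the multiplicative inequality \eqref{e:004inhX} carries no information whatsoever about the remaining rows, which may be badly approximable; with $\sum_j r_j=1$ those rows would then have to satisfy inequalities you cannot justify. The correct (and easy) move, as in the paper's Case~(a), is the opposite: put full weight $r_{j_0}=1$ on the vanishing row, $r_j=0$ for $j\ne j_0$, and $r_{m+i}=\tfrac1n$; the fixed $\vv q$ with $Y_{j_0}\vv q+z_{j_0}\in\Z$ then witnesses $\vv r$-VWA for all large $Q$, with the other rows handled by nearest integers since their requirement is only $|Y_j\vv q+z_j-p_j|<Q^0=1$.
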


Although the argument given below has been used previously in one form or another, the above equivalence is formally new even in the `classical' case $\vv z = \vv0$ and $\min\{m,n\} = 1$.

\bigskip

\begin{proof}
The sufficiency is an immediate consequence of the obvious fact that \eqref{vb+5} implies \eqref{e:004inh}.
For the necessity consider the following two cases.

\medskip

\noindent\textsf{Case (a)}: There exists $\vv q\in\Z^n\bnz$ and ${j_0}$ such that $Y_{j_0}\vv q+z_{j_0}=p_{j_0}\in\Z$. Then it readily follows from the definitions that $(Y;\vv z)$ is both VWMA and $\vv r$-VWA with $r_{j_0}=1$, $r_j=0$ for $1\le j\le m,\ j\neq j_0$, and $r_{m+i}=\tfrac1n$ for $1\le i\le n$.

\medskip

\noindent\textsf{Case (b)}: $Y_j\vv q+z_j\not\in\Z$ for all $\vv q\in\Z^n\bnz$ and $1\le j\le m$.
We are given that for some $\ve\in(0,1)$ there are infinitely many $\vv q\in\Z^n\bnz$ and $\vv p\in\Z^m$ satisfying \eqref{e:004inhX}. Without loss of generality we may also assume that
\begin{equation}\label{vb+10}
    \max_{1\le j\le m}|Y_j\vv q+z_j-p_j|<1.
\end{equation}
Let $0<\ve'<\ve$. Fix any positive parameters $\delta$ and $\delta'$ such that
\begin{equation}\label{vb+9}
\frac{1+\ve}{(1+\delta')(1+\ve')}-m\delta\ge 1, \qquad \frac{1}{1+\delta'}+n\delta\le 1\qqand \frac1\delta\in\Z.
\end{equation}
The existence of $\delta$ and $\delta'$ is easily seen.
For each $(\vv q,\vv p)$ satisfying \eqref{e:004inhX} and \eqref{vb+10} define $Q=\Pi_+(\vv q)^{1+\delta'}$ and the unique $(m+n)$-tuple $\vv u=(u_1,\dots,u_{m+n})$ of integer multiples of $\delta$ such that
\begin{equation}\label{vb+6}
\begin{array}{l}
 Q^{-(1+\ve')(u_j+\delta)}\le |Y_j\vv q+z_j-p_j|<Q^{-(1+\ve')u_j}\quad~(1\le j\le m),\\[1ex]
~~~~~~~~~~~~~Q^{u_{m+i}-\delta}\le |q_i|< Q^{u_{m+i}}\quad~~~~~~(1\le i\le n,\ q_i\neq0),\\[1ex]
~~~~~~~~~~~~~~~~~~~~~~~~u_{m+i}=0\quad~~~~~~~~~~~~~(1\le i\le n,\ q_i=0).
\end{array}
\end{equation}
Let $u=\sum_{i=1}^nu_{m+i}$. Then, by \eqref{vb+6}, we have that $Q^{u-n\delta}\le Q^{1/(1+\delta')}\le Q^{u}$. Therefore,
$1/(1+\delta')\le u\le 1/(1+\delta')+n\delta$. By \eqref{vb+9}, we have that
\begin{equation}\label{vb+11}
1/(1+\delta')\le u\le 1.
\end{equation}
 Next, by \eqref{e:004inhX} and \eqref{vb+6},
\begin{equation}\label{vb+8}
 \prod_{j=1}^mQ^{-(1+\ve')(u_j+\delta)}\times Q^{(1+\ve)/(1+\delta')}\le\Pi(Y\vv q+\vv z-\vv p)\times \PI_+(\vv q)^{1+\ve}<1\,.
\end{equation}
Let $\widehat u=\sum_{j=1}^mu_j$. Then, by \eqref{vb+8}, we get
$Q^{-m(1+\ve')\delta}Q^{-(1+\ve')\widehat u}Q^{(1+\ve)/(1+\delta')}<1$, whence
$$
-m(1+\ve')\delta-(1+\ve')\widehat u+(1+\ve)/(1+\delta')<0.
$$
Hence, by \eqref{vb+9}, we get
\begin{equation}\label{vb+12}
\widehat u>(1+\ve)/(1+\delta')(1+\ve')-m\delta\ge 1.
\end{equation}
By \eqref{vb+11}, \eqref{vb+12} and the fact that $\delta^{-1}\in\Z$, we can  find an $(m+n)$-tuple $\vv r$ of integer multiples of $\delta$ satisfying \eqref{vb+4} such that $r_j\le u_j$ for $1\le j\le m$ and $r_{m+i}\ge u_{m+i}$ for $1\le i\le n$. Then, by \eqref{vb+6}, we get that
\begin{equation}\label{vb+13}
\begin{array}{l}
|Y_j\vv q+z_j-p_j|<Q^{-(1+\ve')r_j}\quad(1\le j\le m),\\[1ex]
~~~~~~~~~~~~~~~|q_i|< Q^{r_{m+i}}\quad~~~~~(1\le i\le n).
\end{array}
\end{equation}
This holds for infinitely many $\vv q$, $\vv p$ and arbitrarily large $Q$. Since the components of $\vv r$ are integer multiples of $\delta$, there is only a finite number of choices for $\vv r$. Therefore, there is a $\vv r$ satisfying \eqref{vb+4} such that \eqref{vb+13} holds for some $\vv q\in\Z^n\bnz$ and $\vv p\in\Z^m$ for arbitrarily large $Q$.
The furthermore part of the lemma is also established as, by construction, $\vv r\in\Q^{m+n}$.
\end{proof}

\bigskip

In view of Lemma~\ref{lemV}, Theorem~\ref{t2inhom} is a consequence of the following transference result regarding $\vv r$-extremality.

\begin{theorem}\label{t2inhom2}
Let $U$ be an open subset of\/ $\R^d$, $\mu$ a Federer measure on $U$ and $F:U\to\Mat{m}{n}$ a continuous map. Let $F_j:U\to\R^n$ denote the $j$-th row of $F$. Let $\vv r$ be an $(m+n)$-tuple of real numbers satisfying \eqref{vb+4}.
Assume that the pair $(F_j,\mu)$ is good and non-planar for each $j$. Then
\begin{equation}
\text{$F_*\mu$ is $\vv r$-extremal} \iff \text{$F_*\mu$ is $\vv r$-inhomogeneously extremal}\,.
\end{equation}
\end{theorem}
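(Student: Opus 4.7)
The direction $(\Leftarrow)$ is immediate upon setting $\vv z = \vv 0$. For the substantive direction $(\Rightarrow)$, the plan is to carry out a weighted analogue of the inhomogeneous transference principle of Beresnevich and Velani \cite{Beresnevich-Velani-10:MR2734962}. In its original form that transference converts homogeneous extremality into inhomogeneous extremality under a \emph{contracting} hypothesis on the measure; the task here is to show that the row-wise good and non-planar hypothesis provides the analogous contraction in the $\vv r$-weighted setting, so that the transference goes through.

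I would recast the problem dynamically. Put $W = \R^{m+n}$ and, for $T > 1$, consider the weighted diagonal element
\[
g_{\vv t(T)} \df \diag\bigl(T^{(1+\ve)r_1},\dots,T^{(1+\ve)r_m},\, T^{-r_{m+1}},\dots,T^{-r_{m+n}}\bigr)
\]
acting on $W$. For $Y \in \Mat{m}{n}$ and $\vv z \in \R^m$, introduce the affine unimodular lattice $\Lambda_{Y,\vv z} \df u_Y \Z^{m+n} + \vv z^\sharp$, where $\vv z^\sharp \in W$ has $\vv z$ in the first $m$ coordinates and zeros in the last $n$. A standard Dirichlet-type calculation then shows that $(Y; \vv z)$ is $\vv r$-VWA if and only if $g_{\vv t(T)} \Lambda_{Y,\vv z}$ contains a nonzero vector of norm less than $T^{-\ve}$ for arbitrarily large $T$ and some $\ve > 0$. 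Consequently $(\Rightarrow)$ reduces to the following affine non-divergence estimate: for every ball $V \subset U$ centered in $\supp\,\mu$ there exist $C, \alpha > 0$ (independent of $\vv z$) such that for all sufficiently large $T$ and every $\eta \in (0, 1)$,
\[
\mu\Bigl(\bigl\{ x \in V : \min_{\vv w \in \Lambda_{F(x),\vv z} \setminus \{0\}} \|g_{\vv t(T)} \vv w\| < \eta \bigr\}\Bigr) \;\le\; C\, \eta^\alpha\, \mu(V).
\]
A Borel-Cantelli argument along a geometric sequence $T_k \to \infty$ with $\eta = T_k^{-\ve}$ would then imply that for each fixed $\vv z$, $\mu$-almost every $x$ yields a pair $(F(x), \vv z)$ which is not $\vv r$-VWA; that is, $F_*\mu$ is $\vv r$-inhomogeneously extremal.

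To derive this estimate I would adapt the Kleinbock--Lindenstrauss--Weiss quantitative non-divergence machinery \cite[Theorem~4.3]{Kleinbock-Lindenstrauss-Weiss-04:MR2134453} to affine lattices, exploiting the row structure. For $(\vv p, \vv q) \in \Z^m \times \Z^n$ the relevant affine form in row $j$ is
\[
x \mapsto F_j(x) \vv q + z_j - p_j,
\]
which is a linear combination of $1, F_{j,1}, \dots, F_{j,n}$ with coefficient vector $(z_j - p_j, q_1, \dots, q_n)$. Since the inhomogeneous parameter only shifts the constant term, $(C, \alpha)$-goodness of $(F_j, \mu)$ applies with the same constants \emph{uniformly in} $\vv z$; likewise, the non-planarity of $(F_j, \mu)$ bounds $\|F_j(\cdot) \vv q + z_j - p_j\|_{\mu, V}$ from below in terms of the coefficient vector, again uniformly in $\vv z$. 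Assembling the rows via a wedge-product argument on $\bigwedge(W)$ with weights prescribed by $\vv r$ should then produce the desired bound.

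The main obstacle is the wedge-product bookkeeping in the affine setting. In the homogeneous case treated in Theorem~\ref{thm: criterion}, the lower bound is extracted from $\pi^+_\vt u_{F(x)} \vw$ for a nonzero $\vw \in \fw_\Z$, whereas in the affine setting the primitive rank-$\ell$ subgroups of $\Lambda_{F(x),\vv z}$ are cosets of integer subgroups, and one must track translated origins through the weighted projections. The row-wise good\,+\,non-planar hypothesis is precisely what makes this tractable: each row delivers its own $(C,\alpha)$-good lower bound on the shifted linear form, independent of the others, and these bounds combine with the $\vv r$-weighting to match exactly the dilation/contraction rates of $g_{\vv t(T)}$, closing the Borel--Cantelli argument.
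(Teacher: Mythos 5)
Your opening sentence gestures at the right mechanism --- the paper's proof is indeed an application of the Beresnevich--Velani inhomogeneous transference --- but the plan you actually carry out abandons that route, and as written it cannot work. The decisive symptom is that your argument never uses the hypothesis that $F_*\mu$ is $\vv r$-extremal: the affine nondivergence estimate you postulate, fed into Borel--Cantelli, would yield inhomogeneous (and, taking $\vv z=\vv 0$, homogeneous) $\vv r$-extremality \emph{unconditionally} from the row-wise good and non-planar assumptions. That conclusion is false, so the central estimate must fail. Concretely, take $m=2$, $n=1$, $F(x)=(x,x)^t$ and $\mu$ Lebesgue on an interval, with $\vv r=(\tfrac12,\tfrac12,1)$: each row is good and non-planar, yet by Dirichlet every $Y=F(x)$ is $\vv r$-VWA, since one only needs $|xq-p|<Q^{-(1+\ve)/2}$ with some $|q|\le Q$ and $(1+\ve)/2<1$. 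For this $F$ the set of $x\in V$ for which $g_{\vv t(T)}\Lambda_{F(x),\vv 0}$ contains a vector of norm $<\eta$ is all of $V$ once $T$ is large, so no bound of the form $C\eta^{\alpha}\mu(V)$ can hold. The underlying reason is that good and non-planar \emph{rows} do not provide the covolume lower bounds for all primitive subgroups that quantitative nondivergence requires (already in the homogeneous setting of Theorem~\ref{thm: criterion}, let alone an affine version); that is precisely what weak non-planarity of the full matrix map was needed for in Theorem~\ref{t2}. Your acknowledged ``main obstacle'' (the affine wedge-product bookkeeping) is therefore not a technicality: it is the point at which the argument is irreparable under the stated hypotheses.

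The statement is genuinely a transference, so the assumed homogeneous $\vv r$-extremality must enter as an input, and the paper's proof is organised around exactly that. One fixes $\vv z$, encodes $\vv r$-VWA of $(F(x);\vv z)$ and of $(F(x);\vv 0)$ as membership in the limsup sets $\La_\inh(\psi)$ and $\La_\hom(\psi)$ built from the sets \eqref{I} and \eqref{H} (Proposition~\ref{prop5}), and applies the abstract transference Theorem~\ref{ITP}. The hypotheses are consumed only in verifying its two conditions: the intersection property is an elementary difference/triangle-inequality argument producing a homogeneous pair $\alpha''=(\vv q-\vv q',\vv p-\vv p')$, while the contraction property uses goodness of each $(F_j,\mu)$ to bound $\mu\big(5B\cap\rI_t(\alpha,\psi)\big)$ by $C\,2^{-(\psi-\psi^+)r_j\alpha t}\mu(5B)$ for the function $x\mapsto F_j(x)\vv q+z_j-p_j$, and non-planarity of $(F_j,\mu)$ to get the uniform lower bound $\|F_j(\cdot)\vv q+p\|_{\mu,U}\ge r_0\|\vv q\|$, which guarantees that $\supp\,\mu\not\subset\rI_t(\alpha,\psi^+)$ for large $t$ and allows the balls $B$ to be chosen. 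The extremality hypothesis then enters only through the implication $\mu(\La_\hom(\psi))=0\Rightarrow\mu(\La_\inh(\psi))=0$; no dynamics on the space of (affine) lattices and no nondivergence estimate is needed. If you want to salvage your write-up, replace the dynamical reduction by this measure-theoretic scheme and keep your correct observation that the goodness and non-planarity bounds are uniform in $\vv z$, since the inhomogeneous shift only changes the constant term of the affine forms.
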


\medskip

Another consequence of Lemma~\ref{lemV} and Theorems~\ref{t2inhom2} and \ref{t2} is the following

\begin{theorem}\label{t2inhom3}
Let $U$ be an open subset of $\R^d$, $\mu$ a Federer measure on $U$ and $F:U\to\Mat{m}{n}$ a continuous map such that
$(F,\mu)$ is {\rm(i)} good, and {\rm(ii)} weakly  non-planar. Then $F_*\mu$ is inhomogeneously $\vv r$-extremal for any $(m+n)$-tuple $\vv r$ of real numbers satisfying \eqref{vb+4}.
\end{theorem}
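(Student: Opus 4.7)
The plan is to assemble Theorem~\ref{t2inhom3} from three ingredients that have already been put in place: Theorem~\ref{t2}, Lemma~\ref{lemV}, and the transference Theorem~\ref{t2inhom2}. First I would invoke Theorem~\ref{t2} to conclude that, under the standing hypotheses of goodness and weak non-planarity of $(F,\mu)$, the pushforward $F_*\mu$ is strongly extremal; that is, for $\mu$-almost every $x\in U$ the matrix $F(x)$ is not VWMA in the homogeneous sense.

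Next I would upgrade this to \emph{weighted} extremality for \emph{every} weight $\vv r$ satisfying \eqref{vb+4}. This is where Lemma~\ref{lemV} enters: in the homogeneous case $\vv z = \vv 0$, the lemma says that $(Y;\vv 0)$ is VWMA if and only if it is $\vv r$-VWA for some $(m+n)$-tuple $\vv r$ obeying \eqref{vb+4}. Hence if $F(x)$ is not VWMA then $F(x)$ is not $\vv r$-VWA for any admissible $\vv r$ whatsoever, so $F_*\mu$ is $\vv r$-extremal for every such $\vv r$ simultaneously, with a single exceptional $\mu$-null set.

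The last step is to pass from $\vv r$-extremality to \emph{inhomogeneous} $\vv r$-extremality via Theorem~\ref{t2inhom2}. To apply that transference I need to verify its hypothesis, namely that each row $F_j:U\to\R^n$ is such that $(F_j,\mu)$ is good and non-planar. Goodness of the rows follows immediately from goodness of $(F,\mu)$, since every affine combination of the entries of $F_j$ is also an affine combination of the entries (equivalently the order-$1$ minors) of $F$; and non-planarity of the rows follows from weak non-planarity of $(F,\mu)$ applied to matrices $A\in\Mat{n}{m}$, $B\in\Mat{n}{n}$ of the form in which only one row of $(A|B)$ is nonzero (this is the observation made in the paper right before Corollary~\ref{inhom_cor}). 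With those hypotheses in hand, Theorem~\ref{t2inhom2} yields inhomogeneous $\vv r$-extremality of $F_*\mu$, uniformly in $\vv r$, which is precisely the conclusion of Theorem~\ref{t2inhom3}.

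There is essentially no hard step here: the statement is a clean corollary obtained by chaining Theorem~\ref{t2} $\Rightarrow$ $\vv r$-extremality via Lemma~\ref{lemV} $\Rightarrow$ inhomogeneous $\vv r$-extremality via Theorem~\ref{t2inhom2}. The only point that merits a brief remark is that the hypotheses of the transference theorem apply to the rows rather than to $F$ itself, but this is already verified in the paragraph just after Theorem~\ref{t2inhom}. The genuine work of the proof is all contained in Theorems~\ref{t2} and \ref{t2inhom2} (and Lemma~\ref{lemV}); once those are established, Theorem~\ref{t2inhom3} is a direct consequence.
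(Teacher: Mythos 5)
Your proposal is correct and is essentially the paper's own derivation: Theorem~\ref{t2inhom3} is stated there precisely as a consequence of Theorem~\ref{t2} (strong extremality), Lemma~\ref{lemV} (in the homogeneous case, not VWMA implies not $\vv r$-VWA for any $\vv r$ satisfying \eqref{vb+4}), and the transference Theorem~\ref{t2inhom2} applied to the rows, whose hypotheses are exactly the observation made before Corollary~\ref{inhom_cor}. One small caveat: your parenthetical justification of the non-planarity of the rows, via matrices with only one nonzero row of $(A|B)$, does not work as stated, since such a pair has $\rank(A|B)=1<n$ and so $\cH_{A,B}\notin\mathbf{H}_{m,n}$; the claim itself is true and follows, for instance, from Lemma~\ref{l:04} (each row is a $1\times n$ submatrix, hence $(F_j,\mu)$ is weakly non-planar) together with the row/column case in which weak non-planarity coincides with \equ{nonpl} (Lemma~\ref{l:02}), or from an explicit full-rank choice such as $A=\vv a\,\vv e_j^t$, $B=b\,I_n$ when $F_j(\cdot)\vv a+b$ vanishes on $V\cap\supp\mu$ with $b\neq0$.
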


For the rest of \S\ref{inhomsec} we will be concerned with proving Theorem~\ref{t2inhom2}. This will be done by using the Inhomogeneous Transference of \cite[\S5]{Beresnevich-Velani-10:MR2734962} that is now recalled.

\subsection{Inhomogeneous Transference framework}\label{IT}

In this section we recall the general framework of Inhomogeneous Transference of \cite[\S5]{Beresnevich-Velani-10:MR2734962}.
Let $\AAA$ and $\TTT$ be two countable indexing sets. For each $\alpha\in\AAA$, $\ttt\in\TTT$ and $\ve>0$ let $\hom_\ttt(\alpha,\ve)$ and $\inh_\ttt(\alpha,\ve)$
be open subsets of $\R^d$ (more generally the framework allows one to consider any metric space instead of $\R^d$). Let $\Psi$ be a set of functions $\psi:\TTT\to\Rp$. Let $\mu$ be a non-atomic finite Federer measure supported on a bounded subset of $\R^d$. The validity of the following two properties is also required.

\medskip

\noindent\textbf{The Intersection Property}.
For any $\psi\in\Psi$ there exists $\psi^*\in\Psi$ such that for all but finitely many $\ttt\in\TTT$
and all  distinct $\alpha$ and $\alpha'$ in $\AAA$ we have that
\begin{equation}\label{e:066}
    \inh_\ttt(\alpha,\p(\ttt))\cap \inh_\ttt(\alpha',\p(\ttt))\subset
\textstyle\bigcup\limits_{\alpha''\in\AAA}\hom_{\ttt}(\alpha'',\psi^*(\ttt))   \ .
\end{equation}

\medskip

\noindent\textbf{The Contraction Property}.
For any $\psi\in\Psi$ there
exists $\psi^+\in\Psi$ and a sequence of positive numbers
$\{k_\ttt\}_{\ttt\in \TTT}$ satisfying
\begin{equation}\label{e:067}
    \sum_{\ttt\in \TTT}k_\ttt<\infty  ,
\end{equation}
 such that for all but finitely  $\ttt\in \TTT$ and all
$\alpha\in \AAA$ there exists a collection $\Cta$ of balls $B$
centred at $\Supp$ satisfying the
following conditions\,{\rm:}
  \begin{equation}\label{e:068}
\textstyle    \Supp\cap\inh_\ttt(\alpha,\psi(\ttt)) \ \subset \
    \bigcup\limits_{B\in\Cta}B\,,
  \end{equation}
    \begin{equation}\label{e:069}
\textstyle        \Supp\cap\bigcup\limits_{B\in\Cta}B \ \subset \ \inh_\ttt(\alpha,\p^+(\ttt))
    \end{equation}
    and
    \begin{equation}\label{e:070}
        \mu\big(5B\cap\inh_\ttt(\alpha,\psi(\ttt))\big)\ \le  \ k_\ttt \,  \mu(5B) \ .
    \end{equation}

\medskip

For $\psi\in\Psi$, consider the $\limsup$ sets
\begin{equation}\label{e:065}
\textstyle \La_\hom(\psi\,)=\limsup\limits_{\ttt \in \TTT}\bigcup\limits_{\alpha\in\AAA}\hom_\ttt(\alpha,\p(\ttt))
 \qand
 \La_\inh(\psi\,)=\limsup\limits_{\ttt \in \TTT}\bigcup\limits_{\alpha\in\AAA}\inh_\ttt(\alpha,\p(\ttt))\,.
\end{equation}
The following statement from \cite{Beresnevich-Velani-10:MR2734962} will be all that we need to give a proof of Theorem~\ref{t2inhom2}.

\begin{theorem}[Theorem~5 in \cite{Beresnevich-Velani-10:MR2734962}]\label{ITP}
Suppose $\AAA$, $\TTT$, $\hom_\ttt(\alpha,\ve)$, $\inh_\ttt(\alpha,\ve)$, $\Psi$ and $\mu$ as above are given and the intersection and contraction properties are satisfied. Then
\begin{equation}\label{e:071}
\forall\ \psi\in\Psi\ \ \mu(\La_\hom(\psi))=0    \
\qquad\Longrightarrow\qquad
\forall\ \psi\in\Psi\ \  \mu(\La_\inh(\psi))=0 .
\end{equation}
\end{theorem}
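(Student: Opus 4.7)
The plan is to argue by contradiction. Suppose $\mu(\La_\inh(\psi)) > 0$ for some $\psi \in \Psi$, while by hypothesis $\mu(\La_\hom(\psi')) = 0$ for every $\psi' \in \Psi$. The proof combines the intersection and contraction properties through a density-point / Borel--Cantelli scheme. The first move uses the intersection property to peel off the multiply-covered part of $\La_\inh(\psi)$. For each $\ttt$ set
\[
M_\ttt \df \{x \in \Supp : x \in \inh_\ttt(\alpha,\psi(\ttt)) \cap \inh_\ttt(\alpha',\psi(\ttt)) \text{ for some } \alpha \neq \alpha'\}.
\]
By \eqref{e:066}, $M_\ttt \subset \bigcup_{\alpha''}\hom_\ttt(\alpha'',\psi^*(\ttt))$ for all but finitely many $\ttt$, so $\limsup_\ttt M_\ttt \subset \La_\hom(\psi^*)$, which is $\mu$-null since $\psi^* \in \Psi$. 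Removing this null set, we may work with the subset $S \subset \La_\inh(\psi)$ of points that, at every sufficiently large $\ttt$ for which $x \in \bigcup_\alpha \inh_\ttt(\alpha,\psi(\ttt))$, lie in a \emph{unique} $\inh_\ttt(\alpha,\psi(\ttt))$; then $\mu(S) = \mu(\La_\inh(\psi)) > 0$.

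The second move is a local estimate via the contraction property. Using the differentiation theorem for Federer measures, choose a $\mu$-density point $x_0 \in S \cap \Supp$, fix $\eta \in (0, 1)$, and take $B_0 = B(x_0, r_0)$ centered on $\Supp$ with $\mu(B_0 \cap S) \ge (1-\eta)\mu(B_0)$. For $\ttt$ large and each $\alpha$, the contraction property furnishes a cover $\Cta$ of $\Supp \cap \inh_\ttt(\alpha,\psi(\ttt))$ satisfying \eqref{e:068}--\eqref{e:070}. Applying Vitali's $5r$-covering lemma within each $\Cta$ and restricting to balls meeting $B_0$, \eqref{e:070} together with the Federer doubling constant $D$ yields
\[
\mu\big(B_0 \cap \inh_\ttt(\alpha,\psi(\ttt))\big) \le D^2\, k_\ttt \sum_{B \in \cJ_{\ttt,\alpha},\, B \cap B_0 \neq \varnothing} \mu(B),
\]
where $\cJ_{\ttt,\alpha} \subset \Cta$ is the disjoint subfamily. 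Summing over $\alpha$, using the disjointness of the sets $\inh_\ttt(\alpha,\psi(\ttt))$ outside $M_\ttt$, and invoking a second application of the intersection property to the enlarged function $\psi^+$ (to bound the overlap of balls from $\cJ_{\ttt,\alpha}$ and $\cJ_{\ttt,\alpha'}$ modulo a further null set inside $\La_\hom((\psi^+)^*)$), one deduces
\[
\mu\big(B_0 \cap \textstyle\bigcup_\alpha \inh_\ttt(\alpha,\psi(\ttt)) \setminus M_\ttt\big) \le C\, k_\ttt\, \mu(B_0),
\]
with $C$ depending only on the Federer constant of $\mu$ and not on $\ttt$.

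The third step closes the argument by Borel--Cantelli. Since $S \cap B_0 \subset \bigcup_{\ttt \ge T}\big(B_0 \cap \bigcup_\alpha \inh_\ttt(\alpha,\psi(\ttt)) \setminus M_\ttt\big)$ for every $T$, summing the local estimate and invoking \eqref{e:067} gives
\[
(1-\eta)\mu(B_0) \le \mu(S \cap B_0) \le C \mu(B_0) \sum_{\ttt \ge T} k_\ttt \longrightarrow 0 \quad\text{as } T \to \infty,
\]
which contradicts the density-point estimate for any $\eta < 1$.

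The main obstacle is the multiplicity control hidden in the second step: a priori $\sum_\alpha \sum_{B \in \cJ_{\ttt,\alpha}: B \cap B_0 \neq \varnothing} \mu(B)$ may exceed $\mu(B_0)$ by an uncontrolled factor, since $\AAA$ can be countably infinite and for each $\alpha$ the cover $\Cta$ may separately charge $B_0$. The trick is to apply the intersection property a \emph{second} time, now to $\psi^+$, so that two distinct $\alpha$-coverings can coexist on a point of $\Supp \cap B_0$ only on a set contained in $\La_\hom((\psi^+)^*)$, which is null by assumption since $(\psi^+)^* \in \Psi$. Orchestrating this iterated use of both structural properties — and keeping the chain $\psi \mapsto \psi^+ \mapsto (\psi^+)^*$ inside $\Psi$ — is the central technical burden of the proof.
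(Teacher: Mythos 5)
The paper does not actually prove Theorem~\ref{ITP}; it is quoted verbatim as Theorem~5 of \cite{Beresnevich-Velani-10:MR2734962} and used as a black box to derive Theorem~\ref{t2inhom2}. There is therefore no ``paper's own proof'' to compare against. Evaluating your argument on its own merits, however, it contains a genuine gap at the crucial multiplicity-control step.

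Your overall scheme --- peel off the doubly-covered set via the intersection property, take a $\mu$-density point, use the contraction property together with a Vitali covering to get a per-level bound by $k_\ttt\,\mu(B_0)$, then Borel--Cantelli --- is the natural one. The problem is the claim that distinct $\alpha$-coverings can overlap on $\Supp\cap B_0$ only on a $\mu$-null set. What the intersection property applied to $\psi^+$ gives, for a \emph{fixed} $\ttt$, is that
$$
\Supp\cap B\cap B' \ \subset\ \bigcup_{\alpha''}\hom_\ttt\big(\alpha'',(\psi^+)^*(\ttt)\big)
$$
whenever $B\in\cC_{\ttt,\alpha}$, $B'\in\cC_{\ttt,\alpha'}$, $\alpha\neq\alpha'$. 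This $\ttt$-slice is \emph{not} $\mu$-null; only the limsup set $\La_\hom\big((\psi^+)^*\big)$ is null by hypothesis. Consequently the estimate
$$
\sum_{\alpha}\sum_{B\in\cJ_{\ttt,\alpha},\,B\cap B_0\neq\varnothing}\mu(B)\ \le\ C\,\mu(B_0)
$$
does not follow: the overlap region has positive $\mu$-measure at each level, and since $\AAA$ is countably infinite and each $\cJ_{\ttt,\alpha}$ is only disjoint within its own $\alpha$, the multiply-covered mass can accumulate without bound. Your own ``main obstacle'' paragraph identifies exactly where the difficulty lies, but the proposed resolution (nullity of $\La_\hom\big((\psi^+)^*\big)$) quantifies over the limsup and cannot be invoked per $\ttt$. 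A further, secondary issue: the contraction property places no constraint on the radii of the balls in $\cC_{\ttt,\alpha}$, so a ball $B$ meeting $B_0$ may strictly contain $B_0$; in that case $\mu(5B)$ is not comparable to $\mu(B_0)$ and the Vitali-plus-doubling step does not produce a factor of $\mu(B_0)$. Both issues must be resolved --- in \cite{Beresnevich-Velani-10:MR2734962} this is done by a more carefully orchestrated stopping-time/nesting argument rather than by a direct sum over $\alpha$ at each level --- before the Borel--Cantelli conclusion can legitimately be drawn.
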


\subsection{Proof of Theorem~\ref{t2inhom2}}

While proving Theorem~\ref{t2inhom2} there is no loss of generality in assuming that $r_1,\dots,r_m>0$ as otherwise we would consider the smaller system of forms that correspond to $r_j>0$.

From now on fix any $\vv z\in\R^m$. With the aim of using Theorem~\ref{ITP} define $\TTT=\Z_{\ge0}$,
$\AAA=(\Z^n\bnz)\times\Z^m$
and $\Psi=(0,+\infty)$, that is the functions $\psi\in\Psi$ are constants.
Further for $t\in\TTT$, $\alpha=(\vv q,\vv p)\in\cA$ and $\ve>0$, let
\begin{equation}\label{I}
\rI_t(\alpha,\ve)=\left\{x\in U :
\begin{array}{l}
|F_j(x)\vv q+z_j-p_j|< \frac12\cdot2^{-(1+\ve)r_{j}t}\quad (1\le j\le m)\\[0.5ex]
~~~~~~~~~~~~~~~~~~~|q_i| < \frac12\cdot2^{r_{m+i}t}\quad ~~~~~(1\le i\le n)
\end{array}
\right\}
\end{equation}
and
\begin{equation}\label{H}
\rH_t(\alpha,\ve)=\left\{x\in U\,:\,
\begin{array}{l}
|F_j(x)\vv q-p_j|< 2^{-(1+\ve)r_{j}t}\quad (1\le j\le m)\\[0.5ex]
~~~~~~~~~~~~~|q_i| < 2^{r_{m+i}t}\quad ~~~~~(1\le i\le n)
\end{array}
\right\}.
\end{equation}

\begin{proposition}\label{prop5}
Let $x\in U$ . Then
\begin{itemize}
  \item[{\rm(i)}] $(F(x);\vv z)$ is $\vv r$-VWA $\iff$ $x\in\La_\inh(\psi)$ for some $\psi>0;$
  \item[{\rm(ii)}] $(F(x);\vv0)$ is $\vv r$-VWA $\iff$ $x\in\La_\hom(\psi)$ for some $\psi>0$.
\end{itemize}
\end{proposition}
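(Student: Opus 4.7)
The proposition is a routine dyadic reduction: the condition ``for arbitrarily large $Q>1$'' in the definition of $\vv r$-VWA (see \eqref{vb+5}) is equivalent to the assertion ``for infinitely many non-negative integers $t$'' when the approximation parameter $Q$ is restricted to the geometric sequence $Q=2^t$, modulo a small loss in the exponent $\ve$. Both equivalences proceed in exactly the same way; part~(ii) is cleaner because $\rH_t$ carries no factor of $\tfrac12$, while in part~(i) one must enlarge $t$ by a fixed amount to absorb the $\tfrac12$'s appearing in $\rI_t$.

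For the forward direction of (ii), suppose $(F(x);\vv0)$ is $\vv r$-VWA with exponent $\ve>0$. For each witness $Q>1$ together with the corresponding $(\vv q,\vv p)\in\cA$, set $t=\lceil\log_2 Q\rceil$, so that $2^{t-1}<Q\le2^t$. Then $|q_i|<Q^{r_{m+i}}\le2^{tr_{m+i}}$, which matches the second block of inequalities in \eqref{H}; for the first block, $|F_j(x)\vv q-p_j|<Q^{-(1+\ve)r_j}\le 2^{(1+\ve)r_j}\cdot 2^{-(1+\ve)r_jt}$, and the constant $2^{(1+\ve)r_j}$ is swallowed into a slightly smaller exponent: we have $2^{(1+\ve)r_j}\cdot 2^{-(1+\ve)r_jt}\le 2^{-(1+\ve/2)r_jt}$ as soon as $t\ge 2(1+\ve)/\ve$, which holds for all but finitely many of our $Q$. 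Hence $x\in\rH_t(\alpha,\ve/2)$ for infinitely many $t$, giving $x\in\La_\hom(\ve/2)$. The converse is immediate: if $x\in\rH_t(\alpha,\psi)$ for infinitely many $t$, setting $Q=2^t$ produces arbitrarily large witnesses of $\vv r$-VWA with $\ve=\psi$.

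Part~(i) follows the same template. In the forward direction, pick once and for all a positive integer $k$ with $kr_{m+i}\ge 1$ whenever $r_{m+i}>0$ (such $k$ exists because $\sum_i r_{m+i}=1$ forces some $r_{m+i}>0$; if $r_{m+i}=0$ then both $|q_i|<Q^{r_{m+i}}=1$ and $|q_i|<\tfrac12\cdot 2^{r_{m+i}t}=\tfrac12$ simply force $q_i=0$). Given a $\vv r$-VWA witness at level $Q$, take $t'=\lceil\log_2 Q\rceil+k$. Then $|q_i|<Q^{r_{m+i}}\le 2^{(t'-k)r_{m+i}}\le\tfrac12\cdot 2^{r_{m+i}t'}$ by the choice of $k$; while
\[
|F_j(x)\vv q+z_j-p_j|<Q^{-(1+\ve)r_j}\le 2^{(k+1)(1+\ve)r_j}\cdot 2^{-(1+\ve)r_jt'},
\]
which is $\le\tfrac12\cdot 2^{-(1+\ve/2)r_jt'}$ for all sufficiently large $t'$ (using the standing hypothesis $r_j>0$ for $1\le j\le m$). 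This places $x$ in $\rI_{t'}(\alpha,\ve/2)$ for infinitely many $t'$, hence in $\La_\inh(\ve/2)$. The converse is again one line: membership in $\rI_t(\alpha,\psi)$ gives, with $Q=2^t$, strict inequalities $|q_i|<\tfrac12 Q^{r_{m+i}}<Q^{r_{m+i}}$ and $|F_j(x)\vv q+z_j-p_j|<\tfrac12 Q^{-(1+\psi)r_j}<Q^{-(1+\psi)r_j}$ for arbitrarily large $Q$, so $(F(x);\vv z)$ is $\vv r$-VWA with $\ve=\psi$.

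There is no serious obstacle here; the only thing to watch is the book-keeping of the dyadic shift $k$ and the loss $\ve\mapsto\ve/2$, together with the (trivial) separate treatment of coordinates for which $r_{m+i}=0$.
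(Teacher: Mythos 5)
Your proof is correct and follows essentially the same route as the paper: a dyadic reduction $Q\mapsto 2^t$ with a fixed shift (your $k$, the paper's $2^{1/r'}$ with $r'=\min\{r_{m+i}>0\}$) to absorb the factors of $\tfrac12$, a slight decrease of the exponent (your $\ve/2$, the paper's arbitrary $\psi<\ve$) to swallow the constants using $r_j>0$, the same integrality argument forcing $q_i=0$ when $r_{m+i}=0$, and the same one-line converses. No gaps.
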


\medskip

Proposition~\ref{prop5} and Theorem~\ref{ITP} would imply Theorem~\ref{t2inhom2} upon establishing the intersection and contraction properties. While postponing the verification of these properties till the end of the section, we now give a proof of Proposition~\ref{prop5}.

\bigskip

\begin{proof}
We consider the proof of part (i) as that of part (ii) is similar (and in a sense simpler).
Assume that $(F(x),\vv z)$ is $\vv r$-VWA. Then
there exists $\ve>0$ such that for arbitrarily large $Q>1$ there are $\vv q\in\Z^n\bnz$ and $\vv p\in\Z^m$ satisfying \eqref{vb+5} with $Y=F(x)$. For each such $Q$ define $t\in\N$ such that $2^{t-1}<2^{1/r'}Q\le 2^{t}$, where $r'=\min\{r_{m+i}>0:1\le i\le n\}$. Let $0<\psi<\ve$. Then, by \eqref{vb+5} with $Y=F(x)$, we have that
$$
 |F_j(x)\vv q+z_j-p_j|<2^{(1+\ve)r_j}2^{-(1+\ve)r_jt}<\tfrac12\cdot2^{-(1+\p)r_jt}\qquad\text{for $1\le j\le m$}
$$
when $t$ is sufficiently large. Here we use the fact that $r_j>0$. Also when $r_{m+i}>0$ we have that $Q^{r_{m+i}}\le \tfrac12\cdot 2^{r_{m+i}t}$. This is a consequence of the definition of $t$. Hence by \eqref{vb+5} with $Y=F(x)$, we have that
\begin{equation}\label{vb+19}
|q_i|< \tfrac12\cdot 2^{r_{m+i}t}\qquad\text{for $1\le i\le n$}
\end{equation}
when $r_{m+i}>0$. If $r_{m+i}=0$, then we have that $|q_i|<Q^{r_{m+i}}=1$. Since $q_i\in\Z$ we necessarily have that $q_i=0$. Consequently \eqref{vb+19} also holds when $r_{m+i}=0$. Thus, $x\in\inh_t(\alpha,\p)$ and furthermore this holds for infinitely many $t$. Therefore, $x\in\La_\inh(\psi)$. The sufficiency is straightforward because the fact that $x\in\La_\inh(\psi)$ means that with $\ve=\p$ for arbitrarily large $Q=2^t$ $(t\in\N)$ there are $\vv q\in\Z^n\bnz$ and $\vv p\in\Z^m$ satisfying \eqref{vb+5} with $Y=F(x)$. Hence $(F(x);\vv z)$ is $\vv r$-VWA.
\end{proof}

\bigskip

\bigskip

\noindent\textit{Verifying the intersection property.}
Take any $\psi\in\Psi$ and distinct $\alpha=(\vv q,\vv p)$ and $\alpha'=(\vv q',\vv p')$ in $\cA$.
Take any point $x \in \ \inh_t(\alpha,\p)\cap\inh_t(\alpha',\p)$.
It means that
\begin{equation}\label{vb+7}
\begin{array}{ll}
|F_j(x)\vv q+z_j-p_j|< \tfrac12\cdot2^{-(1+\p)r_{j}t},\qquad& |q_i| <\tfrac12\cdot 2^{r_{m+i}t},\\[1ex]
|F_j(x)\vv q'+z_j-p'_j|<\tfrac12\cdot2^{-(1+\p)r_{j}t}, &
|q'_i| <\tfrac12\cdot 2^{r_{m+i}t}
\end{array}
\end{equation}
for $1\le j\le m$ and $1\le i\le n$.
Let $\alpha''=(\vv q'',\vv p'')$, where $\vv p''=\vv p-\vv p'\in\Z^m$ and $\vv q''=\vv q-\vv q'\in\Z^n$.
Using \eqref{vb+7} and the triangle inequality we obtain that
\begin{equation}\label{e:073}
|F_j(x)\vv q''-p''_j|< 2^{-(1+\p)r_{j}t}\qqand
|q''_i| < 2^{r_{m+i}t}
\end{equation}
for $1\le j\le m$ and $1\le i\le n$.
If $\vv q=\vv q'$, then $\vv p''\neq\vv0$ (because $\alpha\neq\alpha'$) and $|p''_j|< 2^{-(1+\p)r_{j}t}\le1$. Since $p''_j\in\Z$ and $|p''_j|<1$ we must have that $p''_j=0$ for all $j$, contrary to $\vv p''\neq\vv0$. Therefore, we must have that $\vv q''\not=\vv0$ and so $\alpha''\in\cA$.
By \eqref{e:073}, we get that
$x\in \hom_{t}(\alpha'',\psi)$. This verifies the intersection property with $\p^*=\p$.

\bigskip

\noindent\textit{Verifying the contraction property.}
Since $(F_j,\mu)$ is good for each $j$, for almost every $x_0\in \Supp\cap U$ there exist positive $C_j$ and $\alpha_j$ and a ball $V_j$ centred at $x_0$ such that for each $\vv q\in\R^n$, $p\in\R$ and $1\le j\le m$ the function $F_j(x)\vv q+p$ is $(C_j,\alpha_j)$-good on $V_j$ with respect to $\mu$. Let $C=\max C_j$, $\alpha=\min\alpha_j$ and $V=\cap_jV_j$. Then
for each $j$ $(1\le j\le m)$, $\vv q\in\R^n$ and $p\in\R$ the function
\begin{equation}\label{vb+21}
F_j(x)\vv q+p\quad\text{is $(C,\alpha)$-good on $V$ with respect to $\mu$}.
\end{equation}
Since the balls $V$ obtained this way cover $\mu$-almost every point of $U$ without loss of generality we will assume that $U=V$ and that $\Supp\subset U$ within our proof of Theorem~\ref{t2inhom2}. Also since $\mu$ is a Radon measure, without loss of generality we can assume that $\mu$ is finite.

Since $(F_j,\mu)$ is non-planar for each $j$, we have that
$$
d_j(\vv q,p)\ \stackrel{\rm def}{=}\ \frac{\|F_j(x)\vv q+p\|_{\mu,U}}{\|\vv q\|}>0
$$
for each $\vv q\in\R^n\bnz$ and $p\in\R$. The quantity $d_j(\vv q,p)$ is the distance of the furthest point of $F_j(\Supp)$ from the hyperplane $\vv y\cdot\vv q+p=0$. Obviously, this is a continuous function of $\vv q$ and $p$. Hence it is bounded away from zero on any compact set, in particular, on $\{\vv q:\|\vv q\|=1\}\times [-N,N]$, where it takes its minimum for a sufficiently large $N$. Hence there is an $r_0>0$ such that
\begin{equation}\label{vb+22}
\|F_j(x)\vv q+p\|_{\mu,U}\ge r_0\|\vv q\|
\end{equation}
for all $\vv q\in\R^n\bnz$, $p\in\R$ and $1\le j\le m$.

Let $\psi>0$ and $0<\psi^+<\p$.
By \eqref{vb+22} and the assumption that $\min_{1\le j\le m}r_j>0$, for sufficiently large $t$ we have that
\begin{equation}\label{e:020}
\Supp\not\subset\rI_t(\alpha,\psi^+).
\end{equation}
We now construct a collection $\cC_{t,\alpha}$ required by the contraction property, where $t\in\Z_{\ge0}$ is sufficiently large and $\alpha=(\vv q,\vv p)\in\Z^n\bnz\times\Z^m$. If $\Supp\cap \rI_t(\alpha,\psi)=\emptyset$,  then taking $C_{t,\alpha}=\emptyset$ does the job. Otherwise, for each $x\in\Supp\cap\rI_t(\alpha,\psi)$ take any ball $B'\subset\rI_t(\alpha,\psi)$ centred at $x$. Clearly, this is possible because $\rI_t(\alpha,\psi)$ is open. Since $\p^+<\p$, we have that $\rI_t(\alpha,\psi)\subset \rI_t(\alpha,\psi^+)$. Therefore, by \eqref{e:020}, there exists $\tau\ge1$ such that
\begin{equation}\label{e:022}
5\tau B'\cap\Supp\not\subset\rI_t(\alpha,\psi^+) \qqand \tau
B'\cap\Supp\subset\rI_t(\alpha,\psi^+)\,.
\end{equation}
Let $B=B(x)=\tau B'$. By the left hand side of \eqref{e:022}, there exists $j\in\{1,\dots,m\}$ and $x_0\in\Supp\cap 5B$ such that
$$
|f(x_0)|\ge \tfrac12\cdot2^{-(1+\p^+)r_{j}t},\qquad\text{where $f(x)=F_j(x)\vv q+z_j-p_j$.}
$$
Hence
$\|f\|_{\mu,5B}\ge \tfrac12\cdot2^{-(1+\p^+)r_{j}t}$.
Observe that
$$
5B\cap\inh_t(\alpha,\psi)\subset \big\{x\in 5B:|f(x)|<\tfrac12\cdot2^{-(1+\p)r_{j}t}\big\}.
$$
Then, since $f$ is $(C,\alpha)$-good, we have that
$$
\begin{array}{rcl}
\mu\big(5B\cap\inh_t(\alpha,\psi)\big) & \le &
\mu\big\{x\in 5B:|f(x)|<\tfrac12\cdot2^{-(1+\p)r_{j}t}\big\} \\[2ex]
& \le & C\left(\frac{\textstyle\tfrac12\cdot2^{-(1+\p)r_{j}t}}{\textstyle\|f\|_{\mu,5B}}\right)^\alpha\mu(5B)
 \le  C\left(\frac{\textstyle\tfrac12\cdot2^{-(1+\p)r_{j}t}}{\textstyle\tfrac12\cdot2^{-(1+\p^+)r_{j}t}}\right)^{\alpha} \mu(5B) \\[3ex]
& \le & C\cdot 2^{-(\p-\p^+)r_j\alpha t}\,\mu(5B)= k_t\,\mu(5B)
\end{array}
$$
where
$$
~~k_t=C\cdot 2^{-(\p-\p^+)r_j\alpha t}.
$$
Clearly, \eqref{e:067} holds. Also, by construction, conditions \eqref{e:068}--\eqref{e:070} are satisfied for the collection $C_{t,\alpha}:=\{B(x):x\in\Supp\cap\rI_t(\alpha,\psi)\}$. This completes the proof of Theorem~\ref{t2inhom2}.

\section{Final remarks}

\subsection{Checking weak non-planarity} The condition of weak non-planarity of pairs $(F,\mu)$ has been demonstrated in this paper to have may nice and natural features. But how one can in general show that a given pair is weakly non-planar? This question is tricky even in the analytic category.
If $\min\{m,n\}=1$ and
$\cM$ is immersed into
$\R^n$ by an analytic map $\vv f=(f_1,\dots,f_n)$, its non-planarity can be verified by taking partial derivatives of $\vf$, i.e.\ via  (\ref{nd}). However, when $\min\{m,n\}>1$ finding an algorithmic way to verify weak non-planarity seems to be an open problem.

Here is a specific example: {\it a matrix version of Baker's problem\/}. Let $m,k\in\N$ and $n=mk$. Let
$$
\cM=\{(X,\dots,X^n)\in M_{m,mn}:X\in M_{m,m}\}.
$$
It seems reasonable to conjecture that $\cM$ is strongly extremal. In the case $k=1$ this problem reduces to Baker's original problem on strong extremality of the Veronese curves. When $m=n=2$ the manifold $\cM$ happens to be non-planar and so weakly non-planar. This is easily verified by writing down all the minors of $(X,X^2)$. It is however unclear how to verify (or disprove) that $\cM$ is weakly non-planar (or possibly strongly non-planar) for arbitrary $m$ and $n$. Note also that the extremality of this manifold has been established in \cite{dima geom}, however the argument is not powerful enough to yield strong extremality.

\subsection{Beyond weak non-planarity}  Let $\cM$ be an analytic manifold in $\Mat{m}{n}$, and let
$$
\cH({\cM)}=\bigcap_{\substack{\cH\in\mathbf{H}_{m,n}\\ \cM\subset\cH}}\cH.
$$
If $\cM\not\subset\cH$ for every $\cH\in\mathbf{H}_{m,n}$, then, by definition, we let $\cH({\cM)}=\Mat{m}{n}$.
In the case  $\min\{m,n\}=1$ the set $\cH(\cM)$ is simply an affine subspace of $\R^m$ or $\R^n$, depending on which of the dimensions is $1$. It is shown in \cite{Kleinbock-03:MR1982150} that {\em if\, $\min\{m,n\}=1$ then $\cM$ is (strongly) extremal if and only if so is $\cH(\cM)$.} A natural question is whether a similar characterisation of analytic (strongly) extremal manifolds in $\Mat{m}{n}$ is possible in the case of arbitrary $(m,n)$.


\subsection{Hausdorff dimension} Another natural challenge is to investigate the Hausdorff dimension of
the exceptional sets of points lying on a non-planar manifold in
$\Mat{m}{n}$ such that (\ref{e:002}) (or (\ref{e:004})~) has infinitely
many solutions (for some fixed $\ve>1$). The upper bounds for
Hausdorff dimension are not fully understood even in the case of
manifolds in $\R^n$ -- see \cite{Beresnevich-Bernik-Dodson-02:MR2069553, Beresnevich-Velani-07:MR2285737, Bernik-1983a, BernikDodson-1999}. However, there has been great success with establishing lower bounds -- see
\cite{Beresnevich-SDA1, Beresnevich-Dickinson-Velani-06:MR2184760, Beresnevich-Dickinson-Velani-07:MR2373145, Beresnevich-Velani-07:MR2285737, DickinsonDodson-2000a}.

\subsection{Khintchine-Groshev type theory} The fact that Lebesgue measure on $\Mat{m}{n}$ is extremal can be thought of as a special case of the convergence part of the Khintchine-Groshev theorem.
Specifically, generalizing \eqref{e:002}, for a function  $\psi$ one says that $Y\in \Mat{m}{n}$ is $\psi$-approximable if  the inequality
\begin{equation}\label{psi}
\|Y\vv q-\vv p\|<\psi(\|\vv q\|)
\end{equation}
holds for infinitely many $\vv q\in\Z^n$ and $\vv p\in\Z^m$. A result of Groshev (1938), generalizing Khintchine's earlier work, states that for non-increasing $\psi$, Lebesgue almost no (resp., almost all) $Y\in \Mat{m}{n}$ are $\psi$-approximable if the sum
\eq{kgt}{\sum_{k = 1}^\infty k^{n-1}\psi(k)^m}
converges (resp., diverges). The convergence part straightforwardly follows from the Borel-Cantelli Lemma and does not require the monotonicity of $\psi$; in the divergence part the monotonocity assumption was recently removed in \cite{Beresnevich-Velani-10:MR2576284} in all cases except $m = n = 1$, where it is known to be necessary.

Proving similar results for manifolds of $\Mat{m}{1}$ and  $\Mat{1}{n}$ has been a fruitful activity, see the monograph \cite{BernikDodson-1999} for some earlier results, and \cite{Beresnevich-02:MR1905790, Beresnevich-SDA1, Beresnevich-Bernik-Kleinbock-Margulis-02:MR1944505, Beresnevich-Dickinson-Velani-07:MR2373145, Bernik-Kleinbock-Margulis-01:MR1829381} for more recent developments.
It seems natural to conjecture that, for a monotonic $\psi$, almost no (resp., almost all) $Y$ on a weakly non-planar analytic submanifold of  $ \Mat{m}{n}$ are $\psi$-approximable if the sum \equ{kgt}
converges (resp., diverges). Presently no results are known when $\min\{m,n\} >1$ except for $\psi$ given by the right hand side of \eqref{e:002}, or for the manifold being the whole space $\Mat{m}{n}$. One can also study a multiplicative version of the problem, which is much more challenging and where much less is known, see \cite{Beresnevich-Velani-07:MR2285737}.

\subsection{Other spaces} The analogue of the Baker-Sprind\v zuk conjecture has been established in $\C^n$, $\Q_p^n$ and in products of archimedean and non-archimedean spaces --
see, e.g., \cite{Kleinbock-04:MR2094125, Kleinbock-Tomanov-07:MR2314053}. It would be reasonable to explore
similar generalisations of Theorem~\ref{t2}.

{\footnotesize

\

\noindent
{\sc VB: University of York, Heslington, York, YO10 5DD, England}\\
\hspace*{5.5ex}{\it E-mail}\,:~~ \verb|victor.beresnevich@york.ac.uk|\\[2ex]
{\sc DK: Brandeis University, Waltham MA 02454-9110}\\
\hspace*{5.5ex}{\it E-mail}\,:~~ \verb|kleinboc@brandeis.edu|\\[2ex]
{\sc GM: Yale University, New Haven, CT 06520}\\
\hspace*{5.5ex}{\it E-mail}\,:~~ \verb|margulis@math.yale.edu|

}

\end{document}